\DeclareMathOperator{\HEIGHT}{ht}
\DeclareMathOperator{\ANN}{ann}
\DeclareMathOperator{\SPEC}{Spec}
\DeclareMathOperator{\QUOT}{Quot}
\DeclareMathOperator{\ADJ}{Adj}
\DeclareMathOperator{\PFADJ}{PfAdj}
\DeclareMathOperator{\RANK }{rank}
\DeclareMathOperator{\SUPP}{Supp}
\DeclareMathOperator{\TOR}{Tor}
\DeclareMathOperator{\EXT}{Ext}
\DeclareMathOperator{\DEPTH }{depth}
\DeclareMathOperator{\GRADE }{grade}
\DeclareMathOperator{\PDIM}{pd}
\DeclareMathOperator{\SYM}{Sym}
\DeclareMathOperator{\FITT}{Fitt}
\DeclareMathOperator{\PFAFF}{Pf}
\DeclareMathOperator{\REG}{reg}
\DeclareMathOperator{\TOPDEG}{topdeg}
\DeclareMathOperator{\CHAR}{char}
\DeclareMathOperator{\PROJ}{Proj}
\DeclareMathOperator{\trdeg}{trdeg}
\mathchardef\mhyphen="2D
\newcommand{\N}{\mathbb{N}}
\newcommand{\ann}[2]{\ANN_{#1}\!\left(#2\right)} 
\newcommand{\mf}[1]{\mathfrak{#1}}
\newcommand{\mcal}[1]{\mathcal{#1}}
\newcommand{\fm}{\mathfrak{m}}
\newcommand{\fn}{\mathfrak{n}}
\newcommand{\fp}{\mathfrak{p}}
\newcommand{\chr}[1]{\CHAR #1}
\newcommand{\eltlist}[3]{ #1_{#2}, \ldots, #1_{#3} }
\newcommand{\poly}[2]{#1 \! \left[ #2 \right]}
\newcommand{\polySmall}[2]{#1 \left[ #2 \right]}
\newcommand{\spec}[1]{\SPEC\!\left(#1\right)} 
\newcommand{\proj}[1]{\PROJ\!\left(#1\right)}
\newcommand{\quot}[1]{\QUOT\!\left(#1\right)} 
\newcommand{\adj}[1]{\ADJ\!\left(#1\right)} 
\newcommand{\pfadj}[1]{\PFADJ\!\left(#1\right)} 
\newcommand{\rank}[1]{\RANK\!\left(#1\right)} 
\newcommand{\gens}[2]{\mu_{#1}\!\left(#2\right)}
\newcommand{\wtld}[1]{\widetilde{#1}}
\newcommand{\supp}[2]{\SUPP_{#1}\!\left(#2\right) } 
\newcommand{\Tor}[4]{\TOR_{#1}^{#2}\!\left(#3, #4\right)} 
\newcommand{\Ext}[4]{\EXT_{#2}^{#1}\!\left(#3, #4\right)} 
\newcommand{\comp}[2][\bullet]{\mathbf{#2}\!_{#1}}
\newcommand{\compp}[2][\bullet]{\mathbf{#2}_{#1}}
\newcommand{\depth}[2]{\DEPTH_{#1}  #2 } 
\newcommand{\grade}[1]{\GRADE  #1 } 
\newcommand{\pdim}[2]{\PDIM_{#1}\!  #2 } 
\newcommand{\hgt}[1]{\HEIGHT  #1 } 
\newcommand{\rees}[1]{\mathcal{R}\!\!\left(#1\right)}
\newcommand{\sym}[2]{\SYM_{#1}\!\left(#2\right)} 
\newcommand{\locoh}[3]{\mathrm{H}^{#1}_{#2}\hspace{-0.7mm}\left(#3\right)}
\newcommand{\aspread}[1]{\ell \! \left( #1 \right)}
\newcommand{\detid}[2]{I_{#1}\!\left(#2\right)}
\newcommand{\pfaffid}[2]{\PFAFF_{#1}\!\left(#2\right)}
\newcommand{\fitt}[2]{\FITT_{#1}\!\left( #2 \right)}
\newcommand{\td}[1]{\TOPDEG #1}
\newcommand{\tdp}[1]{\TOPDEG \! \left( #1 \right)}
\newcommand{\A}[2]{\mcal{A}_{#1}\!\left(#2\right)}
\newcommand{\bzero}[1]{b_{0}\!\left(#1\right)}
\newcommand{\fiber}[1]{\mathcal{F}\hspace{-1mm}\left(#1\right)}
\newcommand{\hmgy}[2]{\mathrm{H}_{#1}\hspace{-0.7mm}\left(#2\right)}
\newcommand{\inlineqt}[2]{\left. #1 \!\middle/ #2 \right.}
\newcommand{\paren}[1]{\left(#1\right)}
\newcommand{\brces}[1]{\left\{#1\right\}}
\newtheorem{thm}{Theorem}
\newtheorem{prop}[thm]{Proposition}
\newtheorem{lem}[thm]{Lemma}
\newtheorem{cor}[thm]{Corollary}
\newtheorem{data}[thm]{Data}
\theoremstyle{definition}
\newtheorem{notation}[thm]{}
\numberwithin{thm}{section}
\apptocmd{\sloppy}{\hbadness 10000\relax}{}{}
\colorlet{shadecolor}{orange!20}
\title{Bounding the degrees of the defining equations of Rees rings for certain determinantal and Pfaffian ideals}
\author{Monte Cooper and Edward F. Price III}
\date{}
\begin{document}

\maketitle

\begin{abstract}
    We consider ideals of minors of a matrix, ideals of minors of a symmetric matrix, and ideals of Pfaffians of an alternating matrix. Assuming these ideals are of generic height, we characterize the condition $G_{s}$ for these ideals in terms of the heights of other ideals of minors or Pfaffians of the same matrix. We additionally obtain bounds on the generation and concentration degrees of the Rees rings of a subclass of such ideals via specialization of the Rees rings in the generic case. We do this by proving, given sufficient height conditions on ideals of minors or Pfaffians of the matrix, the specialization of a resolution of the Rees ring in the generic case is an approximate resolution of the Rees ring in question. We end the paper by giving some explicit generation and concentration degree bounds. 
\end{abstract}

\large

\section{Introduction}

Let $R$ be a Noetherian ring and $I$ be an $R$-ideal. The \textit{Rees ring} of $I$ is $\rees{I} = R\!\left[It\right] \subseteq R\!\left[t\right]$, where $t$ is an indeterminate over $R$. Moreover, the Rees ring may be regarded as a standard graded ring over $R$, and note $\rees{I} \cong \bigoplus_{i =0}^{\infty} I^{i}$. Many invariants and properties of concern in commutative algebra depend on understanding the asymptotic properties of the powers of an ideal $I$. For example, information about the various notions of multiplicity of an ideal $I$ can be obtained by understanding the asymptotic properties of the ideal. Further, the normality of an ideal can be detected using the Rees ring. Since the Rees ring encodes information about all powers of the ideal $I$, a great deal of information can be obtained through the study of the Rees ring.

Understanding the Rees ring is important to other disciplines as well. For example, in algebraic geometry, if $R$ is the coordinate ring of an affine variety, and $I$ is the ideal corresponding to a subvariety, then $\rees{I}$ is the homogeneous coordinate ring of the blowup of the variety along the subvariety.  Moreover, if $R = K\!\left[x_{1},\ldots,x_{d}\right]$ is a standard graded polynomial ring over a field $K$ and $I = \left(f_{1},\ldots,f_{n}\right)$ is generated by $n$ homogeneous polynomials of the same degree, one can define a rational map between projective spaces $\Psi \colon \mathbb{P}_{K}^{d-1} \dashrightarrow \mathbb{P}_{K}^{n-1}$ away from $V\!\left(I\right)$ via $\Psi\!\left(\left[a_{1} : \: \cdots \: : a_{d} \right]\right) = \left[f_{1}\!\left(a_{1},\ldots,a_{d}\right) : \: \cdots \: : f_{n}\!\left(a_{1},\ldots,a_{d}\right)\right]$. In this setting, the Rees ring $\rees{I}$ is the bihomogeneous coordinate ring of the graph of $\Psi$.

The descriptions of the Rees ring given above provide parametric equations for $\rees{I}$; however, if one wishes to study invariants of an ideal or geometric properties of the blowup or of a rational map, obtaining implicit equations is much more useful. Indeed, there has been interest in studying the implicit defining equations of $\rees{I}$ and similar algebras in the areas of geometric modeling \cite{Cox} and chemical reaction networks \cite{CLS}. 

Much work has been done in studying the implicit defining equations of $\rees{I}$ in a few cases. Considerable work has been done in the case where $I$ is perfect of grade two \cite{HSV} \cite{Morey} \cite{Morey-Ulrich} \cite{CHW} \cite{HSV2} \cite{Buse} \cite{KPURNS} \cite{CBDA} \cite{CBDA2} \cite{Nguyen} \cite{Madsen} \cite{BM} \cite{KPUBGA} \cite{Sammartano} \cite{Kim-Mukundan}. Some work has also been done in the case where $I$ is a perfect Gorenstein ideal of grade three \cite{Morey} \cite{Johnson} \cite{KPU}. More generally, there has been some work in the cases where $I$ is the ideal of minors of a matrix; in particular, \cite{BCV2} and \cite{HPPRS} study this problem for ideals of minors of a generic matrix, and \cite{BCV} studies this problem for matrices with linear entries.  

It is the aim of this paper to obtain degree bounds on the implicit defining equations of $\rees{I}$ when $I$ is the ideal of minors of a matrix or symmetric matrix or when $I$ is the ideal of Pfaffians of an alternating matrix. We do not require that the matrices are generic nor do we require the entries to be linear forms. We obtain these bounds by specializing from the generic case and controlling the specialization map between the respective Rees rings. In order to control the specialization map, we require the height of $I$ to be the same as the corresponding ideal in the generic case.  In order to obtain degree bounds, we often require the ideals of lower size minors to satisfy certain height bounds, which are stated where relevant.

\Cref{SectionNotation} describes notation and conventions assumed throughout the paper, in particular for matrices, determinantal ideals, resolutions and approximate resolutions, Fitting ideals and the condition $G_{s}$, the Rees ring and its defining equations, the symmetric algebra, and the special fiber ring.

Since the condition $G_{s}$ is often used in the study of Rees rings, \cref{SectionGConditions} focuses on these conditions. We characterize $G_{s}$ for generic height determinantal and Pfaffian ideals of a matrix $A$ in \cref{GsConditions}. This characterization is made in terms of ideals of lower size minors (or lower size Pfaffians) of $A$, giving a more computationally efficient method to verify $G_{s}$ than through the use of Fitting ideals. We use this characterization to provide a classification of the maximal $s$ for which determinantal and Pfaffian ideals of a generic matrix satisfy $G_{s}$ in \cref{GsGeneric}, \cref{GsSymmetric}, and \cref{GsAlternating}.

\Cref{SectionSpecialization} is concerned with the specialization process. We describe how to specialize resolutions of generic determinantal ideals without explicit reference to the differentials in a given resolution. We also establish the main tools used to control the defining ideal of $\rees{I}$ in terms of the generic case, namely, \cref{SpecializationLemma} and \cref{ContainmentLemma}. We finish the section by using the main tools to establish when the Rees ring specializes in \cref{SpecializationRees} and classifying certain determinantal and Pfaffian ideals as being of linear type or of fiber type in \cref{LinFibCor}. 

Lastly, in \cref{SectionDegreeBounds}, we give bounds on the generation and concentration degrees of the defining equations for $\rees{I}$. 

In particular, \cref{OrdSection} develops degree bounds for ordinary matrices. We prove bounds for maximal minors in \cref{TheoremMaximalMinors}. Using this, we recover results of \cite[3.7]{BCV} and \cite[6.1.a]{KPU}. We address the case of $2 \times 2$ minors in \cref{Theorem2x2Minors} and show the $2 \times 2$ minors of a $3 \times n$ matrix with linear entries is of fiber type in \cref{FiberTypeSize2}. In \cref{TheoremSubmaximalMinorsOrdinary}, we obtain bounds in the case of submaximal minors of a square matrix, and we give degree bounds generally in \cref{TheoremArbitraryMinors}.

For \cref{SymSection}, we obtain general results for submaximal minors of a symmetric matrix in \cref{SymmetricSubmaximal}; however, we are unable to produce explicit degree bounds due to a lack of known results for ideals of minors of symmetric matrices.

Finally, in \cref{AltSection}, we prove degree bounds for Pfaffian ideals of alternating matrices. We recover the results of \cite[6.1b]{KPU} in \cref{PfaffianDegreeBoundFacts}. We obtain new results about size $n-2$ Pfaffians in \cref{TheoremN-2Pfaffs} and show $4 \times 4$ Pfaffians of a $6 \times 6$ alternating matrix with linear entries is of fiber type.  We give degree bounds for $4\times 4$ Pfaffians in \cref{Theorem4Pfaffs}, and we finish with general degree bounds in \cref{TheoremGeneralPfaffs}.

\section{Notation, Conventions, and Background}
\label{SectionNotation}
\hypertarget{SectionNotation}{} \:

\begin{notation}\label{MatrixNotation}\hypertarget{MatrixNotation} Let $R$ be a Noetherian ring, and let $A$ be an $m \times n$ matrix with entries in $R$. Throughout this paper, we use the convention $m \le n$. We use $\detid{t}{A}$ to denote the ideal generated by the $t \times t$ minors of $A$ in the case $1 \le t \le m$. By convention, if $t \le 0$, we set $\detid{t}{A} = R$, and if $t > m$, we set $\detid{t}{A} = 0$.   

By a \textit{generic matrix} $X$ over $R$, we mean a matrix whose entries are distinct indeterminates over $R$.

It is well-known \cite[Theorem 3]{EN} that if $\detid{t}{A}$ is proper, then $\hgt{\detid{t}{A}} \le \left(m-t+1\right)\left(n-t+1\right)$, and equality is achieved if $A$ is a generic matrix \cite[2]{Eagon}. As such, the ideal $\detid{t}{A}$ is said to be \textit{of generic height} if it achieves this maximal height, i.e., if $\,\hgt{\detid{t}{A}} = \left(m-t+1\right)\left(n-t+1\right)$.

For any $m \times n$ matrix $A$, the notation $A^{T}$ refers to the \textit{transpose} of $A$.  We say is $A$ \textit{symmetric} if $A^{T} = A$. We say $A$ is \textit{alternating} if $A^{T} = -A$ and if the diagonal entries of $A$ are $0$. 

By a \textit{generic symmetric matrix} $X$ over $R$, we mean a symmetric matrix whose upper triangle consists of distinct indeterminates over $R$. It is known \cite[2.1]{Jozefiak} that if $A$ is an $n \times n$ symmetric matrix and if $\detid{t}{A}$ is a proper ideal, then $\hgt{\detid{t}{A}} \le \binom{n-t+2}{2}$, and equality is achieved if $A$ is a generic symmetric matrix. As such, if $A$ is an $n \times n$ symmetric matrix, then the ideal $\detid{t}{A}$ is said to be \textit{of generic symmetric height} if it achieves this maximal height, i.e., if $\hgt{\detid{t}{A}} = \binom{n-t+2}{2}$.

It is well-known that if $A$ is an $n \times n$ alternating matrix, then $\det A$ is a perfect square in $R$. As such, one considers the  \textit{Pfaffian} of an alternating matrix $A$, denoted $\pfaffid{}{A}$, as a polynomial in the entries of $A$ whose square is $\det A$. For a definition and a more in depth discussion of the properties of Pfaffians, see \cite[Appendix D]{FP} or \cite[pp. 140-142]{Artin}. When $n$ is odd, $\det A = 0$, so we focus on submatrices of even size. Hence, we shall consider submatrices of size $2t \times 2t$ instead of $t \times t$ as in the case of ideals of minors.

Given an $n \times n$ alternating matrix $A$, by $\pfaffid{2t}{A}$, we denote the ideal generated by the Pfaffians of the $2t \times 2t$ principal submatrices of $A$ in the case $2 \le 2t \le n$. By convention, if $2t \le 0$, then $\pfaffid{2t}{A} = R$, and if $2t > n$, then $\pfaffid{2t}{A} = 0$. For simplicity, we also refer to the Pfaffians of the $2t \times 2t$ principal submatrices of $A$ as the $2t \times 2t$ Pfaffians of $A$.

By a \textit{generic alternating matrix} $X$ over $R$, we mean an alternating matrix whose entries above the diagonal are distinct indeterminates over $R$. It is known \cite[2.1, 2.3]{JP} that if $A$ is an $n \times n$ alternating matrix and if $\pfaffid{2t}{A}$ is a proper ideal, then $\hgt{\pfaffid{2t}{A}} \le \binom{n-2t+2}{2}$, and equality is achieved if $A$ is a generic alternating matrix. As such, if $A$ is an $n \times n$ alternating matrix, then the ideal $\pfaffid{2t}{A}$ is said to be \textit{of generic alternating height} if it achieves this maximal height, i.e., if $\hgt{\pfaffid{2t}{A}} = \binom{n-2t+2}{2}$.

Throughout this paper we shall provide data intended to establish the setting corresponding to each of the above types of matrices. In particular, in our given data, we shall always use (a) to refer to $m \times n$ matrices while (b) and (c) shall refer to the symmetric and alternating cases, respectively.
\end{notation}

\bigskip

\begin{notation}
\label{AdjNotation}\hypertarget{AdjNotation}
Let $A$ be an $n \times n$ matrix with entries in a ring $R$. It is a classical result that there exists an $n \times n$ matrix $\adj{A}$, called the \textit{classical adjoint} of $A$, with entries in $R$ satisfying the property $\adj{A}A = \det\!\left(A\right)I_{n \times n}$. This follows from the Laplace expansion of $\det A$.

Analogously, let $n$ be even, and let $A$ be an $n \times n$ alternating matrix with entries in $R$. Then there exists an $n \times n$ alternating matrix $\pfadj{A}$, called the \textit{Pfaffian adjoint} of $A$, with entries in $R$ satisfying the property $\pfadj{A}A = \pfaffid{}{A} I_{n \times n}$. As in the non-alternating case, this result follows from a Laplace expansion of the Pfaffian. In particular, if $A$ is an $n \times n$ alternating matrix with $n$ even, then
\[ \pfaffid{}{A} = \sum\limits_{i < j} \left(-1\right)^{i+j-1}A_{ij}\mathrm{Pf}^{\,ij}\hspace{-1mm}\left(A\right) + \sum\limits_{i > j} \left(-1\right)^{i+j}A_{ij}\mathrm{Pf}^{\,ij}\hspace{-1mm}\left(A\right),\]
where $\mathrm{Pf}^{\,ij}\hspace{-1mm}\left(A\right)$ denotes the Pfaffian of the alternating matrix obtained from $A$ by deleting rows and columns $i$ and $j$ (see, for instance, \cite[Appendix D]{FP}).  Define $\pfadj{A}$ to be the $n \times n$ alternating matrix whose $\left(i,j\right)$-entry is $\left(-1\right)^{i+j}\mathrm{Pf}^{\,ij}\hspace{-1mm}\left(A\right)$ for $i < j$. Using the Laplace expansion above, one verifies  $\pfadj{A} A = \pfaffid{}{A} I_{n \times n}$.
\end{notation}

\bigskip

\begin{notation}\label{BoundsNotation}\hypertarget{BoundsNotation}
Given a graded Noetherian ring $R$ and a graded $R$-Module $M$ we define the \textit{generation degree} of $M$, $\bzero{M}$, and the \textit{concentration degree} of $M$, $\td{M}$, as follows:

\begin{align*}
     \bzero{M} &=  \inf \left\{ p \:\middle\vert\: R\left( \bigoplus_{j \le p} \left[M\right]_{j} \right) = M \right\},  \text{ and}\\
     \td M &= \sup \left\{ j \:\middle\vert\: \left[M\right]_{j} \neq 0 \right\}.& 
\end{align*}

Note that if $M = 0$, then we have \[ \tdp{M} = \bzero{M} = -\infty.\]
\end{notation}

\bigskip

\begin{notation}\label{RegularityNotation}\hypertarget{RegularityNotation} Given a standard graded polynomial ring $R$ over the field $K$ with maximal homogeneous ideal $\fm$ and a finitely generated graded $R$-module $M$, let $\left( \compp{C}, \comp{\partial} \right)$ be a minimal homogeneous free resolution of $M$. The Castelnuovo-Mumford \textit{regularity} of $M$ is 
\[ \REG M = \sup \left\{ \bzero{\comp[j]{C}} - j\right\}.\]
\end{notation}

\bigskip

\begin{notation}\label{BECriterionNotation}\hypertarget{BECriterionNotation} Let $R$ be a Noetherian ring, given a complex 
\[ \comp F \colon 0 \to F_{s} \overset{\varphi_{s}}{\longrightarrow} F_{s-1} \to \cdots \to F_{1} \overset{\varphi_{1}}{\longrightarrow} F_{0} \to 0  \] of finite free $R$-modules, we set $r_{i} = \sum_{j=i}^{s}\left(-1\right)^{j-i} \rank{F_{j}}$ and use $I \! \left( \varphi_{i} \right)$ to denote $\detid{r_{i}}{\varphi_{i}}$.
\end{notation}

\bigskip

\begin{notation}\label{ApproximateResolutionNotation}\hypertarget{ApproximateResolutionNotation} Let $R$ be a Noetherian nonnegatively graded ring of dimension $d > 0$ with $R_{0}$ local and with unique maximal homogeneous ideal $\fm$, and suppose $M$ is a graded $R$-module. 
For any homogeneous complex of finitely generated graded modules $\compp D$ with $M \cong \hmgy{0}{\compp D}$, we say $\compp D$ is an \textit{approximate resolution} of $M$ if both of the following conditions hold:
\begin{enumerate}
    \item[$(\mathrm{i})$] $\dim \hmgy{j}{\compp{D}} \le j$ whenever $1 \le j \le d-1$, and
    \item[$(\mathrm{ii})$] $\min\!\left\{ d, j+2\right\} \le \depth{}{\compp[j]{D}}$ whenever $0 \le j \le d-1$.
\end{enumerate}

While we define this in terms of graded rings and modules, we note this definition also applies to local rings by giving the trivial grading.

We will focus on complexes of free modules; hence, condition (ii) will be automatically satisfied if $R$ is Cohen-Macaulay.
\end{notation}

\bigskip

\begin{notation}\label{PerfectNotation}\hypertarget{PerfectNotation} When $R$ is a Noetherian ring, $I$ is a proper $R$-ideal, and $M$ is a nonzero finitely generated $R$-module, it is always the case that $\grade{\!\left(\ann{R}{M}\right)} \le \pdim{R}{M}$, where $\pdim{R}{M}$ denotes the projective dimension of $M$ as an $R$-module. If equality holds we say $M$ is a \textit{perfect} $R$-module. In the ideal case, though it has the structure of an $R$-module, we say $I$ is a \textit{perfect ideal} when the $R$-module $R/I$ is perfect.

Given a perfect $R$-ideal $I$ with $\grade{\!\left(I\right)} = g\,$ we say $I$ is a \textit{Gorenstein ideal} if $\Ext{g}{R}{R/I}{R}$ is a cyclic $R$-module.
\end{notation}

\bigskip

\begin{notation}\label{GsNotation}\hypertarget{GsNotation} For a ring $R$ and an $R$-module $M$ we use the notation $\tau_{R}\!\left(M\right)$ to denote the $R$-torsion of $M$. We use the notation $\gens{R}{M}$ to refer to the minimal number of generators of a finitely generated module $M$ over the local ring $R$. 

Given an ideal $I$ in a Noetherian ring $R$, $I$ is said to satisfy the condition $G_{s}$ if $\,\gens{R_{\fp}}{I_{\fp}} \le \dim R_{\fp}$ for all $\fp \in V\!(I)$ with $\dim R_{\fp} \le s-1$.

We make use of an equivalent description of the condition $G_{s}$ in terms of heights of Fitting ideals. In particular, for an ideal $I$ with $\hgt{I} > 0$, $I$ satisfies $G_{s}$ if and only if $\hgt{\fitt{i}{I}} > i$ for $0 < i < s$ if and only if $\hgt{\fitt{i}{I}} \ge \min\!\left\{i+1,s\right\}$ for all $i \ge 1$.
\end{notation}

\bigskip

\begin{notation}\label{ReesNotation}\hypertarget{ReesNotation} Let $R$ be a Noetherian ring, and suppose $I$ is an $R$-ideal. The \textit{Rees ring} of $I$ over $R$ is defined to be the $R$-subalgebra $R\!\left[It\right] \subseteq R\!\left[t\right]$, where $t$ is an indeterminate over $R$, and is denoted $\rees{I}$. We note $\rees{I}$ is a standard graded $R$-algebra whose grading is induced by the standard grading on $R\!\left[t\right]$. 

Given $R = K\!\left[x_{1},\ldots,x_{d}\right]$ a standard graded polynomial ring over a field $K$ and $I = \left(f_{1},\ldots,f_{n}\right)$ generated by homogeneous forms $f_{1}, \ldots, f_{n}$ of the same degree $D$, we define $S = R\!\left[T_{1},\ldots,T_{n}\right]$ as a standard bigraded $K$-algebra where $\deg x_{i} = \left(1,0\right)$ and $\deg T_{i} = \left(0,1\right)$. We give $\rees{I}$ the bigrading where $\deg x_{i} = \left(1,0\right)$ and $\deg t = \left(-D,1\right)$ in order to force the $R$-algebra homomorphism $\pi \colon S \to \rees{I}$ given by $T_{i} \mapsto f_{i}t$ to be a bihomogeneous $K\!$-algebra epimorphism. We use $\mathcal{J}$ to denote the bihomogeneous ideal $\ker \pi$ and call this the \textit{defining ideal} of $\rees{I}$. An element of a minimal bihomogeneous generating set of $\mathcal{J}$ is referred to as a \textit{defining equation} of $\rees{I}$.
\end{notation}

\bigskip

\begin{notation}\label{SymmetricNotation}\hypertarget{SymmetricNotation} We also note $\sym{}{I\!\left(D\right)}$ is a standard bigraded $K\!$-algebra, and the natural $R$-algebra homomorphism $\alpha \colon \sym{}{I\!\left(D\right)} \to \rees{I}$ given by $f_{i} \mapsto f_{i}t$ is a bihomogeneous $K\!$-algebra epimorphism. We use $\mathcal A$, or $\A{}{I}$, to denote the bihomogeneous ideal $\ker \alpha$. We use the notation $\A{}{I}$ primarily to distinguish between multiple ideals under consideration. It is important to recognize that  the $R$-algebra homomorphism $f \colon S \to \sym{}{I\!\left(D\right)}$ with $T_{i} \mapsto f_{i}$ is a bihomogeneous $K\!$-algebra epimorphism.  We use $\mathcal{L}$ to denote the bihomogeneous ideal $\ker f$. We then obtain the following commutative diagram with exact rows in which all maps are bihomogeneous.

\[  \begin{tikzcd}
    0 \ar[r] & \mathcal{L} \ar[r] \ar[d, hookrightarrow] & S \ar[d, equal] \ar[r, "f"] & \sym{}{I\!\left(D\right)} \ar[r] \ar[d, "\alpha"] & 0\\
    0 \ar[r] & \mathcal{J} \ar[r] & S \ar[r, "\pi"] & \rees{I} \ar[r] & 0
    \end{tikzcd} \]

As such, one sees $\mathcal{A} = \inlineqt{\mathcal{J}}{\mathcal{L}}$.  Moreover, it is known that $S\left[\mathcal{J}\right]_{\left(*,1\right)} =\mathcal{L}$. As such, $\left[\mathcal{A}\right]_{\left(*,1\right)} = 0$. Moreover, since $\mathcal{L}$ is easy to find through a presentation matrix of $I$, we devote our attention to $\mathcal{A}$.
\end{notation}

\bigskip

\begin{notation}\label{AIdealNotation}\hypertarget{AIdealNotation} We adopt the notation $\sym{k}{I\!\left(D\right)} = \left[\sym{}{I\!\left(D\right)}\right]_{\left(*,k\right)}$. In the same vein, we use the notation $\A{k}{I} = \left[\A{}{I}\right]_{\left(*,k\right)}$.  As such we write $\left[\A{k}{I}\right]_{p}$ to mean $\left[\A{}{I}\right]_{\left(p,k\right)}$. This notation is convenient for us since we often fix the second component in the bidegree. 
\end{notation}

\bigskip

\begin{notation}\label{LinearTypeNotation}\hypertarget{LinearTypeNotation} An ideal $I$ is of \textit{linear type} if $\A{}{I} = 0$, or equivalently, if $\bzero{\A{k}{I}} = \td{\A{k}{I}} = -\infty$ for all $k$. The ideal $I$ is of \textit{fiber type} if $\A{k}{I}$ is generated in degree zero for all $k$, or equivalently, $\bzero{\A{k}{I}} \le 0$ for all $k$.
\end{notation}

\bigskip

\begin{notation}\label{AnalyticSpreadNotation}\hypertarget{AnalyticSpreadNotation} Suppose $\left(R,\fm\right)$ is a Noetherian standard graded ring with unique maximal homogeneous ideal $\fm$, and $I$ is a proper $R$-ideal generated by forms of the same degree. The \textit{special fiber ring} of $I$, denoted $\fiber{I}$, is defined as $\fiber{I} = \inlineqt{\rees{I}}{\fm\rees{I}}$. The \textit{analytic spread} of $I$ is defined as $\dim \fiber{I}$, and is denoted $\aspread{I}$. It is the case that $\aspread{I} \le \min\!\left\{ \gens{}{I}, \dim R \right\}$. Burch \cite{Burch} proved $\aspread{I} + \inf\!\left\{\depth{R} R/I^{k}\right\} \le \dim R$. 
\end{notation}

\bigskip

\section{The Conditions \texorpdfstring{$G_{s}$}{}}
\label{SectionGConditions}
\hypertarget{SectionGConditions}{} 

In much of the work that has been done on studying Rees rings of an ideal $I$, the ideal $I$ has been assumed to satisfy the condition $G_{s}$ for some $s$. One finds $G_{s}$ is often important for the study of Rees rings because of the relation of the condition to the dimension of the symmetric algebra of $I$. Specifically, when $R$ is a Noetherian ring and $I$ is a proper $R$-ideal, \cite{Huneke-Rossi} shows $\dim \sym{}{I} = \sup\!\left\{ \dim R/\fp + \gens{R_{\fp}}{I_{\fp}} \:\middle\vert\: \fp \in \spec{R} \right\}$.

The typical way to compute whether an ideal $I$ satisfies the condition $G_{s}$ for some $s$ is by computing the heights of the Fitting ideals $\fitt{j}{I}$. To compute $\fitt{j}{I}$, one needs to construct a presentation matrix $\varphi$ of $I$ and compute ideals of minors of $\varphi$. The following proposition allows us to bypass both the use of Fitting ideals and the construction of $\varphi$ when working with determinantal ideals $I = \detid{t}{A}$ by characterizing $G_{s}$ in terms of the matrix $A$ itself.

\bigskip

\begin{prop}
\label{GsConditions}
\hypertarget{GsConditions}{} Let $R$ be a Noetherian ring.
\begin{enumerate}
    \item[$(\mathrm{a})$]\label{GsConditionsOrd}\hypertarget{GsConditionsOrd}{} Suppose $ 1 \le t \le m \le n$, and let $A$ be an $m \times n$ matrix with entries in $R$.  Suppose $I = \detid{t}{A}$ is of generic height.  Then $I$ satisfies $\,G_{s}$ if and only if 
\[ \hgt{\detid{j}{A}} \ge \min\left\{\binom{m-j+1}{m-t}\binom{n-j+1}{n-t}, s\right\} \text{\:  for all } \,1 \le j \le t-1. \]
    
    \item[$(\mathrm{b})$]\label{GsConditionsSym}\hypertarget{GsConditionsSym}{} Suppose $1 \le t \le n$, and $A$ is a symmetric $n \times n$ matrix. Let $I = \detid{t}{A}$ be of generic symmetric height. Then $I$ satisfies $\,G_{s}$ if and only if

\[ \hgt{\detid{j}{A}} \ge \min\!\left\{ \frac{1}{n-j+2}\binom{n-j+2}{n-t}\binom{n-j+2}{n-t+1}, s \right\} \text{\:  for all } \,1 \le j \le t-1. \]
   
    \item[$(\mathrm{c})$]\label{GsConditionsAlt}\hypertarget{GsConditionsAlt}{} Suppose $2 \le 2t \le n$, and $A$ is an alternating $n \times n$ matrix. Let $I = \pfaffid{2t}{A}$ be of generic alternating height. Then $I$ satisfies $\,G_{s}$ if and only if

\[ \hgt{\pfaffid{2j}{A}} \ge \min\!\left\{ \binom{n-2j+2}{n-2t}, s \right\} \text{ \:  for all } \,1 \le j \le t-1. \]
\end{enumerate}
\end{prop}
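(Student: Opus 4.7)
The plan is to use the characterization of $G_s$ via Fitting ideal heights from \cref{GsNotation}: $I$ satisfies $G_s$ if and only if $\hgt{\fitt{i}{I}} \ge \min\{i+1,s\}$ for every $i \ge 1$, equivalently if and only if $\gens{R_\fp}{I_\fp} \le \hgt \fp$ for every $\fp \in V(I)$ with $\hgt \fp \le s-1$. I would translate this local condition on $I = \detid{t}{A}$ into heights of $\detid{j}{A}$ through a local block reduction. Given $\fp \in V(I)$, let $r = r(\fp) \in \{0,1,\ldots,t-1\}$ be the largest integer with $\detid{r}{A} \not\subseteq \fp$; pick an $r \times r$ minor of $A$ that is a unit in $R_\fp$ and use the corresponding Schur complement to reduce $A$ over $R_\fp$ to the block form $\bigl(\begin{smallmatrix} I_r & 0 \\ 0 & B \end{smallmatrix}\bigr)$, where $B$ is an $(m-r) \times (n-r)$ matrix with entries in $\fp R_\fp$. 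Because determinantal ideals are preserved by invertible row and column operations, $I_\fp = \detid{t-r}{B}$, and consequently
\[\gens{R_\fp}{I_\fp} \;\le\; \binom{m-r}{t-r}\binom{n-r}{t-r};\]
setting $j = r+1$, the upper bound reads exactly $\binom{m-j+1}{m-t}\binom{n-j+1}{n-t}$.

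For $(\Leftarrow)$, take $\fp \in V(I)$ with $\hgt \fp \le s-1$ and set $j = r(\fp) + 1$. If $j = t$, the displayed bound gives $\gens{R_\fp}{I_\fp} \le (m-t+1)(n-t+1) = \hgt I \le \hgt \fp$, so the desired inequality is automatic. If $1 \le j \le t-1$, the containment $\fp \supseteq \detid{j}{A}$ forces $\hgt \fp \ge \hgt \detid{j}{A}$, and since $\hgt \fp < s$ the height hypothesis yields $\hgt \fp \ge \binom{m-j+1}{m-t}\binom{n-j+1}{n-t} \ge \gens{R_\fp}{I_\fp}$, confirming $G_s$ at $\fp$. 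For $(\Rightarrow)$, if some $j \in \{1,\ldots,t-1\}$ violates the inequality, i.e., $\hgt \detid{j}{A} < \min\{\binom{m-j+1}{m-t}\binom{n-j+1}{n-t},\, s\}$, I would take a minimal prime $\fp$ of $\detid{j}{A}$ realizing this height, arrange (exploiting the generic height hypothesis on $I$) that $\fp \not\supseteq \detid{j-1}{A}$ so that $r(\fp) = j-1$, and verify that at such $\fp$ the $\binom{m-j+1}{t-j+1}\binom{n-j+1}{t-j+1}$ minors of the residual block $B$ are linearly independent modulo $\fp I_\fp$. That makes $\gens{R_\fp}{I_\fp}$ equal to the upper bound, which together with $\hgt \fp < s$ contradicts $G_s$.

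Parts $(\mathrm{b})$ and $(\mathrm{c})$ would follow from the same template after adapting the block reduction to the symmetry type. In the symmetric case I would use congruence transformations $P^T A P$ (preserving symmetry) so that the residual block $B$ is itself an $(n-r) \times (n-r)$ symmetric matrix; the count $\tfrac{1}{n-j+2}\binom{n-j+2}{n-t}\binom{n-j+2}{n-t+1}$ replaces the ordinary binomial product because a minor of $B$ indexed by row set $I$ and column set $J$ coincides with the one indexed by $J$ and $I$. In the alternating case, since the local rank of an alternating matrix is always even, I would strip off an invertible $2r \times 2r$ symplectic block to leave an alternating $(n-2r) \times (n-2r)$ block $B$; the count $\binom{n-2j+2}{n-2t}$ then tallies the $(2t-2j+2) \times (2t-2j+2)$ principal Pfaffians of $B$. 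The main obstacle throughout will be the $(\Rightarrow)$ direction, namely verifying that the upper bound on $\gens{R_\fp}{I_\fp}$ is actually attained at a minimal prime of $\detid{j}{A}$ (or $\pfaffid{2j}{A}$) of the smallest possible height; this is where the generic height hypothesis on $I$ should enter, forcing the residual block $B$ to behave like a generic matrix of the appropriate type inside $R_\fp$.
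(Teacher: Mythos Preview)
Your approach is essentially the paper's: local block reduction of $A$ over $R_\fp$ plus an exact count of generators for the residual determinantal (or Pfaffian) ideal. The paper packages both directions at once by proving the radical equality $\sqrt{\detid{j}{A}}=\sqrt{\fitt{\mu_j}{I}}$ (where $\mu_j$ is your generator count with $r=j$), which is equivalent to your statement that $\gens{R_\fp}{I_\fp}=\mu_{r(\fp)}$ for every $\fp\in V(I)$; once that radical equality is in hand, the reformulated $G_s$ criterion $\hgt\fitt{\mu_j}{I}\ge\min\{\mu_{j-1},s\}$ translates immediately into the stated height bounds on $\detid{j}{A}$.

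Two small points. First, in your $(\Rightarrow)$ direction you try to ``arrange'' that a minimal prime $\fp$ of $\detid{j}{A}$ satisfies $r(\fp)=j-1$. This need not be possible (there exist matrices where every such $\fp$ also contains $\detid{j-1}{A}$), but it is also unnecessary: any $r=r(\fp)\le j-1$ gives $\gens{R_\fp}{I_\fp}=\mu_r\ge\mu_{j-1}>\hgt\fp$, which already kills $G_s$. Second, the ``main obstacle'' you flag---that the minors of the residual block $B$ are linearly independent---is exactly the content of the paper's \cref{MinGens}. The key observation making it work is the one you guessed: since $I_\fp=\detid{t-r}{B}$ has height at least $\hgt I=(m-t+1)(n-t+1)$, which is also the generic-height upper bound for a $(t-r)$-minor ideal of an $(m-r)\times(n-r)$ matrix, the ideal $\detid{t-r}{B}$ is itself of generic height in $R_\fp$; one then specializes from the generic matrix (the entries $X_{ij}-B_{ij}$ form a regular sequence modulo $\detid{t-r}{X}$) to transport the generator count from the generic case. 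In the symmetric setting be aware that congruence $P^TAP$ over a local ring need not diagonalize the invertible part when $2\in\fp$; the paper's ``standard form'' keeps $2\times2$ blocks $\left(\begin{smallmatrix}a&u\\u&b\end{smallmatrix}\right)$ for exactly this reason.
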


\begin{proof}
Let $\left(\mu_{j}\right)_{j=0}^{q}$ be a strictly decreasing sequence of positive integers satisfying the following properties
\begin{enumerate}
    \item[(i)]  $I$ can be generated by $\mu_{0}$ elements, 
    \item[(ii)] $\mu_{q} = \hgt{I} > 0$, and
    \item[(iii)] $\left\{ \gens{R_{\fp}}{I_{\fp}} \:\middle\vert\:  \fp \in V\!\left(I\right)\right\} \subseteq \left\{ \mu_{j} \:\middle\vert\: 0 \le j \le q  \right\} $.
\end{enumerate}

Recall from \cref{GsNotation} that if $\hgt{I} > 0$, then $I$ satisfies $G_{s}$ if and only if \[\hgt{\fitt{k}{I}} \ge \min\!\left\{ k+1, s\right\} \text{ for all } k \ge 1.\] 
We begin by showing $I$ satisfies $G_{s}$ if and only if 
\[\hgt{\fitt{\mu_{j}}{I}} \ge \min\!\left\{\mu_{j-1},s\right\} \text{ for each } 1 \le j \le q.\] 

There are three ways a fixed $k \ge 1$ can relate to the sequence $\left(\mu_{j}\right)$: $\mu_{j-1} > k \ge \mu_{j}$ for some $j$, $1 \le k < \mu_{q}$, or $k \ge \mu_{0}$.  If $k \in \N$ with $\mu_{j-1} > k \ge \mu_{j}$ for some $j$, then $\sqrt{\fitt{\mu_{j}}{I}} = \sqrt{\fitt{k}{I}}$. To see this, for each $\fp \in V\!\left(\fitt{\mu_{j}}{I}\right)$ one has $\gens{R_{\fp}}{I_{\fp}} > \mu_{j}$; via property (iii) we see $\gens{R_{\fp}}{I_{\fp}} \ge \mu_{j-1} > k$, giving $\fp \in V\!\left(\fitt{k}{I}\right)$. On the other hand, if $1 \le k < \mu_{q}$, then $\sqrt{\fitt{k}{I}} = \sqrt{I}$ by Krull's Altitude Theorem and property (ii), giving $\hgt{\fitt{k}{I}} > k$. Finally, if $k \ge \mu_{0}$, then $\sqrt{\fitt{k}{I}} = R$ by property (i), so $\hgt{\fitt{k}{I}} > k$. Thus, we may restate the condition $G_{s}$ as \[\hgt{\fitt{\mu_{j}}{I}} \ge \min\!\left\{\mu_{j-1},s\right\} \text{ for each } 1 \le j \le q.\]  

We prove part (b) as it is the hardest case. We define the sequence $\mu_{j} = \frac{1}{n-j+1}\binom{n-j+1}{n-t}\binom{n-j+1}{n-t+1}$ for $0 \le j \le t-1$. One can show $\mu_{j}$ is strictly decreasing. Since $I$ is of generic symmetric height, $\mu_{t-1} = \hgt{I}$.  Moreover, $I$ can be generated by $\mu_{0}$ elements, since \cite[2.5]{Shafiei} proves $\detid{t}{I}$ may be generated by $\frac{1}{n+1}\binom{n+1}{t+1}\binom{n+1}{t}$ many elements.  Thus, it suffices to show $\left\{ \gens{R_{\fp}}{I_{\fp}} \:\middle\vert\:  \fp \in V\!\left(I\right)\right\} \subseteq \left\{ \mu_{j} \right\}$ and  $\sqrt{\detid{j}{A}} = \sqrt{\fitt{\mu_{j}}{I}}$.

Let $\,\mathfrak{p} \in V\!\left(I\right)$.  Suppose that in $R_{\mathfrak{p}}$, the matrix $A_{\mathfrak{p}}$ is equivalent to
\[ \left(\begin{array}{c|c|c}
I_{h \times h} &  0 & 0 \\
\hline
0 & J_{k \times k} & 0 \\
\hline
0 & 0 & C 
\end{array}\right), \]
where $I_{h \times h}$ is the $h \times h$ identity matrix, $J_{k \times k}$ is a $k \times k$ block diagonal matrix with each block of the form $\,\begin{pmatrix} a & u \\ u & b\end{pmatrix}\;$ so that $u$ is a unit and $a$ and $b$ are non-units, and $C$ is symmetric with no unit entries.   We refer to this as the \textit{standard form} of $A_{\fp}$. Note, for any $\fp \in V\!\left(I\right)$, the matrix $A_{\fp}$ can be placed into standard form without altering the ideal $\detid{t}{A_{\fp}}$. 

Observe that if $A_{\fp}$ is in standard form and $j = h+k$, then $\gens{R_{\fp}}{I_{\fp}} = \mu_{j}$. Indeed, note $I_{\mathfrak{p}} = \detid{t}{A_{\mathfrak{p}}} = \detid{t-j}{C}$, and then apply \cref{MinGens} to $\detid{t-j}{C}$. Therefore we have $\left\{ \gens{R_{\fp}}{I_{\fp}} \:\middle\vert\:  \fp \in V\!\left(I\right)\right\} \subseteq \left\{ \mu_{j} \right\}$.

Finally we show $\sqrt{\detid{j}{A}} = \sqrt{\fitt{\mu_{j}}{I}}$. Suppose $\fp \in V\!\left(I\right)$. Transform $A_{\fp}$ into standard form, and let $\ell = h+k$. Then, as above, $\mu_{\ell} = \gens{R_{\fp}}{I_{\fp}}$.
\begin{align*}
    \fp \notin V\!\left(\detid{j}{A}\right) &\iff A_{\fp} \text{ has a unit } j \times j \text{ minor}\\
     &\iff  j \le \ell \\
     &\iff \mu_{j} \ge \mu_{\ell} = \gens{R_{\fp}}{I_{\fp}}\\
     &\iff \fp \notin V\!\left(\fitt{\mu_{j}}{I}\right)
\end{align*}

Therefore, the result obtains as soon as we have proven \cref{MinGens}. The proofs for parts (a) and (c) are similar using the sequences $\mu_{j} = \binom{m-j}{m-t}\binom{n-j}{n-t}$ and $\mu_{j} = \binom{n-2j}{n-2t}$, respectively.
\end{proof}

\bigskip

\begin{lem}
\label{MinGens}
\hypertarget{MinGens}{}
Let $\left( R, \fm\right)$ be a Noetherian local ring and $1 \le t \le m \le n$.
\begin{enumerate}
    \item[$(\mathrm{a})$] Let $A$ be an $m \times n$ matrix with entries in $\fm$. If $I = \detid{t}{A}$ is of generic height, then $\gens{R}{I} = \binom{m}{t}\binom{n}{t}$.
    \item[$(\mathrm{b})$] Let $A$ be an $n \times n$ symmetric matrix with entries in $\fm$. If $I = \detid{t}{A}$ is of generic symmetric height, then $\gens{R}{I} = \frac{1}{n+1}\binom{n+1}{t+1}\binom{n+1}{t}$.
    \item[$(\mathrm{c})$] Let $A$ be an $n \times n$ alternating matrix with entries in $\fm$. If $I = \pfaffid{2t}{A}$ is of generic alternating height, then $\gens{R}{I} = \binom{n}{2t}$.
\end{enumerate}
\end{lem}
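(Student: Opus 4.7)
My plan is to prove all three parts via a uniform specialization argument from the generic case. For each part, let $X$ denote the corresponding generic matrix (generic $m \times n$ in (a), generic symmetric $n \times n$ in (b), generic alternating $n \times n$ in (c)) over $\Z$, let $\Z[X]$ be the polynomial ring on the independent entries of $X$, and let $\tilde I \subseteq \Z[X]$ denote the corresponding generic ideal ($\detid{t}{X}$ in (a) and (b); $\pfaffid{2t}{X}$ in (c)). The substitution $\sigma \colon \Z[X] \to R$ with $X_{ij} \mapsto A_{ij}$ is a $\Z$-algebra homomorphism carrying the irrelevant ideal $\fm_X = (X_{ij})$ into $\fm$ and sending $\tilde I$ onto $I$.

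The first step is to verify that $\tilde I$ has exactly $N$ minimal graded generators as an ideal of $\Z[X]$, where $N$ is the formula appearing in the statement. For parts (a) and (c), the $N$ minors (respectively, the $N$ principal Pfaffians) of $X$ are homogeneous of the same positive degree, and their monomial supports are pairwise disjoint: the set of variables appearing in any monomial of the $(I,J)$-minor of $X$ is $\{X_{ij} : i \in I,\ j \in J\}$, from which one recovers $(I,J)$; and the set of variables appearing in any monomial of the principal Pfaffian indexed by $S$ is $\{X_{ij} : i,j \in S\}$, which recovers $S$. These generators are therefore $\Z$-linearly independent and minimally generate the lowest nonzero graded piece of $\tilde I$. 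For part (b), the count $\frac{1}{n+1}\binom{n+1}{t+1}\binom{n+1}{t}$ is precisely Shafiei's enumeration of the minimal generators of the generic symmetric determinantal ideal \cite[2.5]{Shafiei}, a fact already invoked in the proof of \cref{GsConditions}.

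Next I invoke generic perfection. In each of (a), (b), and (c), the generic ideal $\tilde I$ is a perfect $\Z[X]$-ideal of grade equal to the generic height formula recorded in \cref{MatrixNotation}. Under the hypothesis that $I$ attains this same generic height in $R$, the Eagon-Hochster generic perfection theorem applies and guarantees that tensoring a minimal graded free resolution of $\Z[X]/\tilde I$ along $\sigma$ produces a free resolution of $R/I$ over $R$. The first term of such a resolution has rank $N$ with first differential valued in $\fm_X$; after specializing, the first term has rank $N$ and its differential has entries in $\sigma(\fm_X) \subseteq \fm$. Hence the resulting resolution is minimal over the local ring $R$, and $\gens{R}{I} = N$.

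The main obstacle is marshaling the appropriate generic perfection and minimal-generation facts for each of the three cases, with Shafiei's count in the symmetric case being the most nontrivial input. Once these ingredients are assembled, the specialization step is essentially formal and handles all three parts uniformly.
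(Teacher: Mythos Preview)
Your approach is correct and essentially parallel to the paper's: both specialize from the generic case, and both rely on the same linear-independence arguments (disjoint monomial supports in (a) and (c), Shafiei's count in (b)). The paper takes the generic matrix over $R$, sets $B=R[X]$, and uses that the entries of $X-A$ form a regular sequence on the Cohen--Macaulay ring $B/\tilde I$ to get $\tilde I \otimes_B R \cong I$, then compares $I\otimes_R R/\fm$ with $\tilde I\otimes_B B/\fn$ directly. You instead work over $\Z[X]$ and invoke generic perfection to push a resolution forward. These are two packagings of the same mechanism: the paper's regular-sequence step already implicitly uses perfection (that is how one knows $X-A$ is regular on $B/\tilde I$), while your route makes the perfection input explicit and appeals to Eagon--Hochster.

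One point to tighten: to conclude $\gens{R}{I}=N$ from the specialized resolution, the relevant condition is that the \emph{second} differential $\partial_2\colon F_2\to F_1$ (the syzygies among your $N$ generators), not $\partial_1$, has entries in $\fm_X$ and hence in $\fm$ after specialization. Your write-up emphasizes $\partial_1$. The fix is immediate from what you have already shown: since the $N$ generators are homogeneous of the same degree and $\Z$-linearly independent, any syzygy among them must have all coefficients in $\fm_X$; this is exactly the condition you need. Also, over $\Z[X]$ the phrase ``minimal graded free resolution'' is not automatic since $\Z$ is not a field; it would be cleaner either to argue only about the first syzygies as above, or to follow the paper and take the generic matrix over $R$ so that the base of the grading is local.
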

\begin{proof}
We begin with a reduction to the generic case. We prove this reduction for (b). The reduction is similar for (a) and (c). Let $X = \left(X_{ij}\right)$ be an $n\times n$ generic symmetric matrix over $R$, and define $B = R\!\left[X\right]$. Denote the unique maximal homogeneous $B$-ideal $\fm+\left(X\right)$ as $\fn$. Let $N$ be the $B$-ideal generated by the entries of the matrix $X-A$. We give $R$ the $B$-algebra structure induced by $R \cong B/N$. We note $R/\fm \cong B/\fn$ as $B$-algebras.  Since $N$ is generated by a regular sequence modulo $\detid{t}{X}$, we also have $\detid{t}{X} \otimes_{B} R \cong \detid{t}{A}$. Hence, 

\[ \detid{t}{A} \otimes_{R} R/\fm \cong \detid{t}{X} \otimes_{B} R  \otimes_{R} R/\fm \cong \detid{t}{X} \otimes_{B} B/\fn.\]

Therefore, $\detid{t}{A} \otimes_{R} R/\fm \cong \detid{t}{X} \otimes_{B} B/\fn$ as $B/\fn$-vector spaces. Hence, by Nakayama's lemma, $\gens{R}{\detid{t}{A}} = \gens{B}{\detid{t}{X}}$. 

For (a), we note $B$ is a free $R$-module with monomials in the entries of $X$ as a free basis. Thus, it suffices to show the distinct minors of $X$ are $R$-linearly independent. By the Laplace expansion of determinants, every monomial in the support of a $t \times t$ minor is a squarefree monomial of degree $t$. Given a monomial $\alpha$ in the support of a fixed  $t \times t$ minor $M$, if the indeterminate $X_{ij}$ divides $\alpha$, then any $t \times t$ submatrix of $X$ having determinant $M$ must use row $i$ and column $j$. Thus, since there are $t$ distinct indeterminates dividing $\alpha$, there is only one $t \times t$ submatrix of $X$ for which $\alpha$ is in the support of the determinant. Therefore, $\alpha$ determines the corresponding minor $M$. Hence, the distinct $t \times t$ minors of $X$ are $R$-linearly independent, and thus form a minimal generating set of $\detid{t}{X}$. There are $\binom{m}{t}\binom{n}{t}$ distinct $t \times t$ minors of $X$.

Similarly, for (c), we note $B$ is a free $R$-module with monomials in the entries of $X$ above the diagonal as a free basis. Thus, it suffices to show the distinct Pfaffians of $X$ are $R$-linearly independent. By the Laplace expansion of Pfaffians, every monomial in the support of a $2t \times 2t$ Pfaffian is a squarefree monomial of degree $t$. However, given a monomial $\alpha$ in the monomial support of a fixed  $2t \times 2t$ Pfaffian $P$, if the indeterminate $X_{ij}$ divides $\alpha$, then any principal submatrix of $X$ having Pfaffian $P$ must use row $i$ and column $j$. Thus, since there are $t$ distinct indeterminates dividing $\alpha$, there is only one principal submatrix of $X$ for which $\alpha$ is in the support of the Pfaffian. Therefore, $\alpha$ determines the corresponding Pfaffian $P$. Hence, the distinct $2t \times 2t$ Pfaffians of $X$ are $R$-linearly independent, and thus form a minimal generating set of $\pfaffid{2t}{X}$. There are $\binom{n}{2t}$ distinct principal submatrices $X$.

In case (b), we refer to \cite[2.5]{Shafiei} to obtain a count for the number of linearly independent $t \times t$ minors. In particular we obtain $\gens{B}{\detid{t}{X}} = \frac{1}{n+1}\binom{n+1}{t+1}\binom{n+1}{t}.$
\end{proof}

\bigskip

We will now classify the maximal $s$ for which $\detid{t}{X}$ satisfies $G_{s}$ where $X$ is a generic matrix.

\bigskip

\begin{cor}[Characterization of $G_{s}$ for generic matrices]
\label{GsGeneric}
\hypertarget{GsGeneric}{}Let $X$ be a generic $m \times n$ matrix, $1 \le t \le m \le n$, and $I = \detid{t}{X}$. 

Then $I$ satisfies $G_{\infty}$ if and only if one of the following conditions holds:

\begin{enumerate}
    \item[$(\mathrm{i})$] $t=1$.
    \item[$(\mathrm{ii})$] $t=m=n$.
    \item[$(\mathrm{iii})$] $m=n$ and $\,t=n-1$.
    \item[$(\mathrm{iv})$] $n=m+1$ and $\,t=m$.
    \item[$(\mathrm{v})$] $n=m+2$ and $\,t=m$.
    \item[$(\mathrm{vi})$] $m = 2$, $n = 5$, and $\,t = 2$.
\end{enumerate}

If $\;3 \le t = m$ and \,$n = m+3$, then the maximal $s$ for which $I$ satisfies $G_{s}$ is $s=18$.

In all other cases, the maximal $s$ for which $I$ satisfies $G_{s}$ is $s = \left(m-t+2\right)\left(n-t+2\right)$.
\end{cor}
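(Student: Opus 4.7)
The plan is to apply \cref{GsConditions}(a) directly to $X$, using the fact that $\hgt{\detid{j}{X}} = (m-j+1)(n-j+1)$ for $1 \le j \le m$. This reduces the $G_{s}$ test to the purely numerical condition
\[ (m-j+1)(n-j+1) \ge \min\!\left\{\binom{m-j+1}{m-t}\binom{n-j+1}{n-t},\, s\right\} \qquad (1 \le j \le t-1). \]
When $t = 1$ the index set is empty and $G_{\infty}$ holds trivially, accounting for case~(i); from here on I assume $t \ge 2$. Reindex by $r = t - j \in \{1,\ldots,t-1\}$ and set $p = m-t$, $q = n-t$, so that the inequality at index $r$ becomes $(p+r+1)(q+r+1) \ge \min\{\binom{p+r+1}{r+1}\binom{q+r+1}{r+1},\, s\}$. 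Because $(p+r+1)(q+r+1)$ is strictly increasing in $r$, the maximal admissible $s$ equals $\infty$ precisely when the binomial inequality $(p+r+1)(q+r+1) \ge \binom{p+r+1}{r+1}\binom{q+r+1}{r+1}$ holds for every $r \in \{1,\ldots,t-1\}$, and otherwise equals $(p+r_{0}+1)(q+r_{0}+1)$, where $r_{0}$ is the smallest $r$ in that range at which the inequality fails.

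Introduce $h(r) = \binom{p+r+1}{r+1}\binom{q+r+1}{r+1} - (p+r+1)(q+r+1)$; the task is to locate the smallest $r$ with $h(r) > 0$. At $r = 1$ a short manipulation produces $h(1) = \tfrac{1}{4}(p+2)(q+2)\bigl((p+1)(q+1) - 4\bigr)$, so $h(1) \le 0$ if and only if $(p+1)(q+1) \le 4$. Consequently, for any pair $(p,q)$ with $p \le q$ outside the short list $\{(0,0),(0,1),(0,2),(0,3),(1,1)\}$, one has $r_{0} = 1$ automatically, yielding $s = (p+2)(q+2) = (m-t+2)(n-t+2)$; this already establishes the "in all other cases" assertion.

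It remains to examine the five exceptional pairs by direct computation of $h(r)$. Routine simplification gives $h(r) = -r(r+2)$ for $(0,0)$ and $(0,1)$, $h(r) = -r(r+3)/2$ for $(0,2)$, $h(r) \equiv 0$ for $(1,1)$, and, most importantly, $h(r) = r(r-1)(r+4)/6$ for $(0,3)$. The first four are nonpositive for every $r \ge 1$, so $G_{\infty}$ holds in those cases and recovers cases~(ii), (iv), (v), and (iii) respectively. In the $(0,3)$ case, the identity $h(1) = 0$ together with $h(r) > 0$ for every $r \ge 2$ means the answer is sensitive to whether $t-1 \ge 2$: when $t = 2$ only $r = 1$ is tested and $G_{\infty}$ persists (producing case~(vi), with $m = 2$, $n = 5$), while for $t \ge 3$ the binding index is $r_{0} = 2$, producing the exceptional value $s = 3 \cdot 6 = 18$ for $n = m+3$ and $t = m \ge 3$. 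I expect the main obstacle to be recognizing $(0,3)$ as the unique pair where the $r = 1$ test is nondiagnostic and verifying that the violation first appears at $r = 2$ precisely when $t \ge 3$; once this is pinned down, the remaining bookkeeping collapses into the case list.
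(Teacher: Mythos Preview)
Your proposal is correct and follows essentially the same approach as the paper: both reduce to \cref{GsConditions}(a), test the critical index $j=t-1$ (your $r=1$) to isolate the five exceptional $(p,q)$ pairs via the inequality $(m-t+1)(n-t+1)\le 4$, and then handle the $(0,3)$ case by moving to $j=t-2$ (your $r=2$). Your reindexing with $p,q,r$ and the explicit closed forms for $h(r)$ make the verification of $G_\infty$ in cases~(ii)--(vi) cleaner than the paper's ``elementary computations,'' but the underlying argument is the same.
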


\begin{proof}
From \hyperref[GsConditionsOrd]{\cref*{GsConditions}.a}, in order to satisfy $G_{\infty}$ we must have 
\begin{equation}(m-j+1)(n-j+1) = \hgt{\detid{j}{X}} \ge \binom{m-j+1}{m-t}\binom{n-j+1}{n-t} \tag{3.3.1}\label{3.3.1}\end{equation} 
for all $1 \le j \le t-1$.
Elementary computations reveal this condition is satisfied for cases (i)-(vi).

In order to obtain an upper bound on $s$ for the other cases, we show \labelcref{3.3.1} cannot be satisfied when substituting $j = t-1$, except in the cases (i)-(vi) where $G_{\infty}$ holds and for one exceptional case. 

When substituting $j = t-1$ into \labelcref{3.3.1}, we obtain the inequality $4 \ge \left(m-t+1\right)\left(n-t+1\right)$. The following table lists the cases where this inequality is satisfied. 

\[ \begin{array}{c|l c l}
(m-t+1)(n-t+1)  & & \mbox{Result}& \\
\hline \hline 1  &  t = m &\mbox{ and }& m = n\\
\hline 
    2 & t = m &\mbox{ and }& n = m + 1\\
    \hline
    3 & t = m &\mbox{ and }& n = m + 2\\
    \hline
    4 & t = m &\mbox{ and }& n = m + 3\\
      &       &\mbox{ or }&           \\
      & t = m - 1 &\mbox{ and }& n = m
\end{array} \]

The above table lists cases (i)-(vi) and the exceptional case $3 \le t = m$ and $n = m+3$. 

For the exceptional case $3 \le t = m$ and $n=m+3$, we substitute $j = t-2 = m-2$, to obtain $\binom{m-j+1}{m-t}\binom{n-j+1}{n-t} = 20 > 18 = \left(m-j+1\right)\left(n-j+1\right)$. Thus, the only way $G_{s}$ could be satisfied is if $18 = \hgt{\detid{m-2}{X}} \ge s$. Since $\hgt{\detid{j}{X}} \ge \hgt{\detid{m-2}{X}}$ for all $1 \le j \le m-2$, it follows that the maximal $s$ for which $G_{s}$ is satisfied is when $s = 18$.

Apart from the aforementioned cases, since $\hgt{\detid{t-1}{X}} < \binom{m-j+1}{m-t}\binom{n-j+1}{n-t}$ when $j = t-1$, the only way $G_{s}$ could be satisfied is if $(m-t+2)(n-t+2) =\hgt{\detid{t-1}{X}} \ge s$. Since $\hgt{\detid{j}{X}} \ge \hgt{\detid{t-1}{X}}$ for all $1 \le j \le t-2$, it follows that the maximal $s$ for which $G_{s}$ is satisfied is when $s = (m-t+2)(n-t+2)$. 
\end{proof}

\bigskip

\begin{cor}[Characterization of $G_{s}$ for generic symmetric matrices]
\label{GsSymmetric}
\hypertarget{GsSymmetric}{}Let $X$ be a generic symmetric $n \times n$ matrix, $1 \le t \le n$, and $I = \detid{t}{X}$. Then $I$ satisfies $G_{\infty}$ if and only if $t = 1$, $t = n$, or $t= n-1$. Otherwise, the maximal $s$ for which $I$ satisfies $G_{s}$ is $s = \binom{n-t+3}{2}$.
\end{cor}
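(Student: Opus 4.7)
The plan is to imitate the argument for \cref{GsGeneric} by applying \hyperref[GsConditionsSym]{\cref*{GsConditions}.b}. Since $X$ is a generic symmetric matrix, each $\detid{j}{X}$ is of generic symmetric height, so $\hgt{\detid{j}{X}} = \binom{n-j+2}{2}$. Hence $I = \detid{t}{X}$ satisfies $G_s$ if and only if
\[ \binom{n-j+2}{2} \ge \min\!\left\{\frac{1}{n-j+2}\binom{n-j+2}{n-t}\binom{n-j+2}{n-t+1},\, s\right\} \]
holds for every $1 \le j \le t-1$. It is convenient to set $k = n-j+2$, so as $j$ ranges over $\{1,\ldots,t-1\}$, the parameter $k$ runs through $\{n-t+3,\ldots,n+1\}$.

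First I would dispatch the $G_\infty$ cases. The case $t = 1$ is vacuous. For $t = n$ the right-hand threshold reduces to $\frac{1}{k}\binom{k}{0}\binom{k}{1} = 1$, which is clearly at most $\binom{k}{2}$. For $t = n-1$ the threshold reduces to $\frac{1}{k}\binom{k}{1}\binom{k}{2} = \binom{k}{2}$, so the inequality holds with equality for every admissible $k$. Thus $G_\infty$ holds in each of these three cases.

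For $t \le n-2$, I would pass to $j = t-1$, where $k = n-t+3 \ge 5$, and rewrite the threshold as
\[ \frac{1}{k}\binom{k}{k-3}\binom{k}{k-2} \,=\, \binom{k}{2}\cdot\frac{(k-1)(k-2)}{6}, \]
which strictly exceeds $\binom{k}{2}$ since $(k-1)(k-2)/6 > 1$ whenever $k \ge 5$. Therefore any $s > \binom{n-t+3}{2}$ would force the minimum on the right to exceed $\binom{n-t+3}{2}$, violating the inequality at $j = t-1$. Conversely, for $s = \binom{n-t+3}{2}$ and any $1 \le j \le t-1$, the bound $\binom{n-j+2}{2} \ge \binom{n-t+3}{2} = s$ forces the overall minimum on the right to be at most $s \le \binom{n-j+2}{2}$, so the condition holds, establishing $s = \binom{n-t+3}{2}$ as the maximum.

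The key observation is simply that $\binom{k}{2}$ is increasing in $k$, so the tightest restriction among all admissible $j$ is already captured at $j = t-1$. I do not anticipate a real obstacle beyond this numerical verification. In particular, no anomalous case analogous to the $n = m+3$ exception in \cref{GsGeneric} arises here, since the ratio $(k-1)(k-2)/6$ already strictly exceeds $1$ at the smallest allowed value $k = 5$, so the binding inequality never tightens further at smaller $j$.
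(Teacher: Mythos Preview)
Your proof is correct and follows essentially the same approach as the paper: apply \cref{GsConditions}.b, verify the $G_\infty$ cases directly, show the inequality fails at $j=t-1$ when $t\le n-2$, and use monotonicity of $\hgt{\detid{j}{X}}$ to conclude $s=\binom{n-t+3}{2}$ works. Your substitution $k=n-j+2$ and the factorization $\frac{1}{k}\binom{k}{3}\binom{k}{2}=\binom{k}{2}\cdot\frac{(k-1)(k-2)}{6}$ are cosmetic variants of the paper's reduction to the inequality $6\ge(n-t+2)(n-t+1)$.
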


\begin{proof}

From \hyperref[GsConditionsSym]{\cref{GsConditionsSym}.b}, in order for $G_{\infty}$ to be satisfied, we must have \begin{equation}\binom{n-j+2}{2} = \hgt{\detid{j}{X}} \ge \frac{1}{n-j+2}\binom{n-j+2}{n-t}\binom{n-j+2}{n-t+1} \tag{3.4.1}\label{3.4.1} \end{equation} for all  $1 \le j \le t-1$. Elementary computations reveal this condition is satisfied for cases $t=1, t=n$, and $t = n-1$.

In order to obtain an upper bound on $s$ for the other cases, we show \labelcref{3.4.1} cannot be satisfied when substituting $j = t-1$, except in the cases where $G_{\infty}$ holds. 

When substituting $j = t-1$ into \labelcref{3.4.1}, we obtain the inequality $6 \ge \left(n-t+2\right)\left(n-t+1\right)$. Restricting to $t \le n$, the inequality is satisfied for $n-1 \le t \le n$.

Apart from the aforementioned cases, since $\hgt{\detid{t-1}{X}} < \frac{1}{n-j+2}\binom{n-j+2}{n-t}\binom{n-j+2}{n-t+1}$ when $j = t-1$, the only way $G_{s}$ could be satisfied is if $\binom{n-t+3}{2} =\hgt{\detid{t-1}{X}} \ge s$. Since $\hgt{\detid{j}{X}} \ge \hgt{\detid{t-1}{X}}$ for all $1 \le j \le t-2$, it follows that the maximal $s$ for which $G_{s}$ is satisfied is when $s = \binom{n-t+3}{2}$. 
\end{proof}

\bigskip

\begin{cor}[Characterization of $G_{s}$ for generic alternating matrices]
\label{GsAlternating}
\hypertarget{GsAlternating}{}Let $X$ be a generic alternating $n \times n$ matrix, $2 \le 2t \le n$, and $I = \pfaffid{2t}{X}$. Then $I$ satisfies $G_{\infty}$ if and only if one of the following conditions holds:
\begin{enumerate}
\item[$(\mathrm{i})$] $2t = 2$,
\item[$(\mathrm{ii})$] $2t = n$,
\item[$(\mathrm{iii})$] $2t = n-2$, or
\item[$(\mathrm{iv})$] $2t = n-1$.
\end{enumerate}

Otherwise, the maximal $s$ for which $I$ satisfies $G_{s}$ is $s = \binom{n-2t+4}{2}$.
\end{cor}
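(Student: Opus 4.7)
The plan is to mirror the strategy used for \cref{GsGeneric} and \cref{GsSymmetric}, applying \hyperref[GsConditionsAlt]{\cref*{GsConditions}.c} and then analyzing binomial coefficient inequalities. Since $X$ is generic alternating, $\hgt{\pfaffid{2j}{X}} = \binom{n-2j+2}{2}$ for all $1 \le j \le t-1$, so by \hyperref[GsConditionsAlt]{\cref*{GsConditions}.c}, the ideal $I = \pfaffid{2t}{X}$ satisfies $G_s$ if and only if
\[ \binom{n-2j+2}{2} \ge \min\!\left\{ \binom{n-2j+2}{n-2t},\, s\right\} \text{ for all } 1 \le j \le t-1. \]

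First I would dispose of case (i): if $2t = 2$, then $t = 1$ and the range $1 \le j \le t-1$ is empty, so $G_\infty$ holds vacuously. For $t \ge 2$, I would specialize the displayed condition to $j = t-1$ and set $k := n - 2t$, obtaining the inequality $\binom{k+4}{2} \ge \binom{k+4}{4}$. A direct check of the four values $k = 0,1,2,3$ shows this holds exactly when $k \le 2$; the subcases $k = 0, 1, 2$ correspond to cases (ii), (iv), (iii) respectively (the subcase $k = 1$ can only occur when $n$ is odd, consistent with $2t = n-1$). It then remains to verify the displayed condition at every $1 \le j \le t-1$ in these subcases: for $k = 0$ the right side of the min is $1$; for $k = 1$ one needs $n - 2j + 1 \ge 2$, which follows from $j \le t-1$ and $2t = n-1$; for $k = 2$ the two binomials in the min coincide with $\binom{n-2j+2}{2}$.

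For the case $n - 2t \ge 3$, the key observation is that for every $1 \le j \le t-1$ we have $\ell := n - 2j + 2 \ge n - 2t + 4 = k + 4$, and an elementary calculation shows $\binom{\ell}{k} > \binom{\ell}{2}$ whenever $k \ge 3$ and $\ell \ge k + 3$ (at $\ell = k+3$ the ratio $\binom{\ell}{k}/\binom{\ell}{2}$ equals $(k+1)/3 > 1$, and the ratio is increasing in $\ell$). Consequently at every relevant $j$ the first term inside the minimum strictly exceeds the left-hand side, forcing $s \le \binom{n-2j+2}{2}$. Since $\binom{n-2j+2}{2}$ is minimized over $1 \le j \le t-1$ at $j = t-1$, the tightest constraint gives the upper bound $s \le \binom{n-2t+4}{2}$. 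Conversely, for this value of $s$, at every $j$ both $\binom{n-2j+2}{n-2t}$ and $\binom{n-2j+2}{2}$ are at least $\binom{n-2t+4}{2}$, so the displayed condition is satisfied and the bound is attained.

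The main obstacle, as in the two preceding corollaries, is the careful bookkeeping of the binomial coefficient comparison $\binom{\ell}{2}$ versus $\binom{\ell}{n-2t}$ as $\ell$ varies with $j$; beyond that, the argument is a formal manipulation of the characterization in \cref{GsConditions} combined with the known height $\binom{n-2j+2}{2}$ of the Pfaffian ideal of a generic alternating matrix.
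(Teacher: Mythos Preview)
Your proof is correct and follows essentially the same approach as the paper: both invoke \cref{GsConditions}.c, test the inequality $\binom{n-2j+2}{2} \ge \binom{n-2j+2}{n-2t}$ at the critical index $j=t-1$ to isolate the $G_\infty$ cases, and then use monotonicity of the heights $\hgt{\pfaffid{2j}{X}}$ in $j$ to pin down the maximal $s$ otherwise. Your version is somewhat more explicit in verifying all $j$ in the $G_\infty$ cases and in justifying the binomial comparison via the ratio argument, whereas the paper dispatches these as ``elementary computations,'' but the logical structure is identical.
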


\begin{proof}
From \hyperref[GsConditionsAlt]{\cref{GsConditionsAlt}.c}, in order for $G_{\infty}$ to be satisfied, we must have \begin{equation}\binom{n-2j+2}{2} = \hgt{\pfaffid{2j}{X}} \ge \binom{n-2j+2}{n-2t}\tag{3.5.1}\label{3.5.1}\end{equation} for all $1 \le j \le t-1$. Elementary computations reveal this condition is satisfied for cases (i)-(iv).

In order to obtain an upper bound on $s$ for the other cases, we show \labelcref{3.5.1} cannot be satisfied when substituting $j = t-1$, except in the cases where $G_{\infty}$ holds. 

When substituting $j = t-1$ into \labelcref{3.5.1} and restricting to $2 \le 2t \le n$, we obtain the inequality $n \ge 2t \ge n-2$. This happens only for cases (ii)-(iv) where $G_{\infty}$ is satisfied.

Apart from the aforementioned cases, since $\hgt{\detid{t-1}{X}} < \binom{n-2j+2}{n-2t}$ when $j = t-1$, the only way $G_{s}$ could be satisfied is if $\binom{n-2t+4}{2} =\hgt{\pfaffid{2t-2}{X}} \ge s$. Since $\hgt{\pfaffid{2j}{X}} \ge \hgt{\pfaffid{2t-2}{X}}$ for all $1 \le j \le t-2$, it follows that the maximal $s$ for which $G_{s}$ is satisfied is when $s = \binom{n-2t+4}{2}$. 
\end{proof}

\bigskip

\section{Approximation of Rees Rings and of Resolutions via Specialization}
\label{SectionSpecialization}
\hypertarget{SectionSpecialization}{}
Let $R$ be a Noetherian ring and let $X$ be a generic $m \times n$ matrix over $R$. For any $m \times n$ matrix $A$ whose entries are in $R$, there exists a surjective $R$-algebra homomorphism $R\!\left[X\right] \to R$ given by $X_{ij} \mapsto A_{ij}$. Moreover, for any $t$, the extension $\detid{t}{X}\!R$ via this $R$-algebra homomorphism is equal to $\detid{t}{A}$. In particular we have \[\dfrac{R\left[X\right]}{\left(\left\{X_{ij}-A_{ij} \mid 1 \le i \le m, 1 \le j \le n\right\}\right)} \cong R  \hspace{0.75cm} \text{ and } \hspace{0.75cm} \detid{t}{X} \otimes_{R[X]} R \cong \detid{t}{A}\] as $R$-modules.

More generally, if $B$ is a Noetherian ring, $J$ is a $B$-ideal, and $N = \left(Y_{1}, \ldots, Y_{D}\right)$ where $Y_{1}, \ldots, Y_{D}$ is a regular sequence on $B/J$, then $B/N$ is said to be a specialization of $B$ and $J\!\left(B/N\right)$ is a specialization of $J$. The primary goal of this section is to investigate how properties of the specialization of an ideal $J$ can be obtained from information about $J$ itself.

In particular, one would hope that understanding $\rees{\detid{t}{X}}$ would give enough information to understand $\rees{\detid{t}{A}}$ under suitable assumptions on $A$.

We now prove a technical lemma which will be critical in our results on specialization and degree bounds.

\bigskip

\begin{lem}
\label{TorsionKernelLemma}
\hypertarget{TorsionKernelLemma}{}Let $R$ be a Cohen-Macaulay ring, and suppose $M$ is a finitely generated $R$-module with finite projective dimension. Let $L$ be an $R$-module. Suppose $f \colon M \to L$ is an $R$-linear map and $\,\ker f = \tau_{R}\!\left(M\right)$.

\begin{enumerate}
    \item[$(\mathrm{i})$]\label{TorsionKernelLemmaFull}
\hypertarget{TorsionKernelLemmaFull}{} Then $f$ is injective if and only if $\; \pdim{R_{\fp}}{M_{\fp}} < \hgt{\fp}\,$ for all $\; \fp \in \spec{R}$ with $\hgt{\fp} > 0$.
    \item[$(\mathrm{ii})$]\label{TorsionKernelLemmaPunctured}
\hypertarget{TorsionKernelLemmaPunctured}{} Suppose $R$ is positively graded with $R_{0}$ local, $\dim R > 0$, $M$ and $L$ are graded $R$-modules, and $f$ is homogeneous. Denote the unique maximal homogeneous ideal of $R$ by $\,\fm$. Then $\,\ker f = \locoh{0}{\fm}{M}$ if and only if $\;\pdim{R_{\fp}}{M_{\fp}} < \hgt{\fp}$ for all $\;\fp$ in $\,\proj{R}$ with $\,\hgt{\fp} > 0$.
\end{enumerate}
\end{lem}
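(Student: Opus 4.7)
The plan is to reduce both parts to a characterization of when the $R$-torsion of $M$ vanishes (or vanishes away from $\fm$), and then apply the Auslander--Buchsbaum formula together with Cohen--Macaulayness.

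First, I would use the hypothesis $\ker f = \tau_{R}(M)$ to rewrite both conclusions as torsion conditions. For (i), $f$ is injective if and only if $\tau_{R}(M) = 0$. For (ii), the assumption $\dim R > 0$ combined with Cohen--Macaulayness forces $\fm$ to contain a nonzerodivisor, since in a Cohen--Macaulay ring the set of zerodivisors is the union of the minimal primes and $\fm$ is not minimal. This gives an inclusion $\locoh{0}{\fm}{M} \subseteq \tau_{R}(M)$, so that the desired equality $\ker f = \locoh{0}{\fm}{M}$ becomes $\tau_{R}(M) = \locoh{0}{\fm}{M}$, which in turn is equivalent to $\tau_{R_{\fp}}(M_{\fp}) = (\tau_{R}(M))_{\fp} = 0$ for every $\fp \in \proj{R}$.

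Next, I would characterize vanishing of $R$-torsion via associated primes and then via projective dimension. Because $R$ is Cohen--Macaulay, the zerodivisors of $R$ are exactly the union of the height-zero primes, so $\tau_{R}(M) = 0$ if and only if every associated prime of $M$ has height zero. Since $M$ has finite projective dimension, each $M_{\fp}$ does too, and the Auslander--Buchsbaum formula applied in the Cohen--Macaulay local ring $R_{\fp}$ gives
\[ \pdim{R_{\fp}}{M_{\fp}} + \depth{R_{\fp}}{M_{\fp}} = \hgt{\fp}. \]
In particular $\pdim{R_{\fp}}{M_{\fp}} \le \hgt{\fp}$ always, and $\fp \in \ASS M$ if and only if $\depth{R_{\fp}}{M_{\fp}} = 0$ if and only if $\pdim{R_{\fp}}{M_{\fp}} = \hgt{\fp}$. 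Hence the statement ``no associated prime of $M$ has positive height'' is equivalent to the strict inequality $\pdim{R_{\fp}}{M_{\fp}} < \hgt{\fp}$ for every $\fp$ with $\hgt{\fp} > 0$.

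Combining the previous two steps yields (i) directly. For (ii), I would apply the same argument locally: $\tau_{R_{\fp}}(M_{\fp}) = 0$ for every $\fp \in \proj{R}$ is equivalent, by (i) applied over $R_{\fp}$, to $\pdim{R_{\fq}}{M_{\fq}} < \hgt{\fq}$ for every $\fq \in \spec{R_{\fp}}$ of positive height; letting $\fp$ range over $\proj{R}$, this collapses to the stated condition on $\proj{R}$. The main technical obstacle is the initial reduction in (ii): one must recognize that in the graded Cohen--Macaulay setting with $\dim R > 0$ the submodule $\locoh{0}{\fm}{M}$ lies inside $\tau_{R}(M)$, so the hypothesis $\ker f = \tau_{R}(M)$ allows the equality $\ker f = \locoh{0}{\fm}{M}$ to be tested one prime at a time on the punctured spectrum. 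Once that reduction is in hand, the Auslander--Buchsbaum/Cohen--Macaulay machinery does the rest of the work uniformly for both parts.
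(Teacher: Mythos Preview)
Your proposal is correct and follows essentially the same strategy as the paper: reduce both statements to a torsion-vanishing condition, then translate via Cohen--Macaulayness and Auslander--Buchsbaum into the projective-dimension inequality. The only cosmetic differences are that you phrase the key step in (i) through associated primes (``$\tau_R(M)=0$ iff every associated prime of $M$ has height zero'') whereas the paper works directly with depth, and in (ii) you invoke (i) over each $R_{\fp}$ while the paper simply reruns the chain of equivalences with $\proj{R}$ in place of $\spec{R}$. One small point worth making explicit in your write-up is the identification $(\tau_R(M))_{\fp}=\tau_{R_{\fp}}(M_{\fp})$, which you use to apply (i) locally; this holds because $R$ is Cohen--Macaulay (so $\ASS_R R$ consists of minimal primes, whence the torsion-free quotient $M/\tau_R(M)$ stays torsion-free after localization), but it is not automatic for arbitrary rings. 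The paper glosses over the analogous step as well, so this is not a gap so much as a place where one extra sentence would make both arguments airtight.
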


\begin{proof}
Let $K = \ker f$.

To prove (i), note $K = 0$ if and only if $K_{\fp} = 0$ for all $\fp \in \spec{R}$. Thus, we just have to show $f_{\fp}$ is injective locally if and only if $\pdim{R_{\fp}}{M_{\fp}} < \hgt{\fp}$ for all $\fp \in \spec{R}$ with $\hgt{\fp} > 0$. Since $K = \tau_{R}\!\left(M\right)$ and $R$ is Cohen-Macaulay, we see $f_{\fp}$ is injective for all $\fp \in \spec{R}$ if and only if $M_{\fp}$ is torsion-free for all $\fp \in \spec{R}$. Since $R$ is Cohen-Macaulay, $M_{\fp}$ is torsion-free for all $\fp \in \spec{R}$ if and only if $\depth{}{M_{\fp}} > 0$ for all $\fp \in \spec{R}$ with $\hgt{\fp} > 0$. Finally, by the Auslander-Buchsbaum Formula, $\depth{}{M_{\fp}} > 0$ for all $\fp \in \spec{R}$ with $\hgt{\fp} > 0$ if and only if $\pdim{R_{\fp}}{M_{\fp}} < \hgt{\fp}$ for all $\fp \in \spec{R}$ with $\hgt{\fp} > 0$.
    
We now prove (ii). Since $\locoh{0}{\fm}{M}$ consists of all elements of $M$ which are annihilated by a power of $\fm$, it follows that $\locoh{0}{\fm}{M} \subseteq \tau_{R}\!\left(M\right) = K \subseteq M$. Under these circumstances, $K = \locoh{0}{\fm}{M}$ if and only if $\supp{}{K} \subseteq \left\{ \fm \right\}$. Thus, since $f$ is homogeneous, we obtain $K = \locoh{0}{\fm}{M}$ if and only if $f_{\fp}$ is injective for all $\fp \in \proj{R}$. The string of equivalences in the proof of (i) is still valid when replacing $\spec{R}$ with $\proj{R}$. As such, we see $K = \locoh{0}{\fm}{M}$ if and only if $f_{\fp}$ is injective for all $\fp
\in \proj{R}$ if and only if $\pdim{R_{\fp}}{M_{\fp}} < \hgt{\fp}$ for all $\fp \in \proj{R}$ with $\hgt{\fp} > 0$.
\end{proof}

\bigskip

The next lemma is a main tool in our results which provides key insight into the specialization of powers of an ideal. While part (i) provides a resolution of the specialization of powers of an ideal, parts (ii) and (iii) are critical tools in providing degree bounds on $\mathcal{A}\!\left(I\right)$ via specialization.

\bigskip

\begin{lem}
\label{SpecializationLemma}
\hypertarget{SpecializationLemma}{}Let $B$ be a Cohen-Macaulay nonnegatively graded ring of dimension $d$ with $B_{0}$ local, and denote the unique maximal homogeneous ideal of $B$ by $\,\fm$. Let $J$ be a homogeneous $B$-ideal generated by forms of the same degree $q$. Suppose $Y_{1}, \ldots, Y_{D}$ is a homogeneous sequence in $B$ which is weakly regular on $B/J$ and on $B$. Let $N = \left(Y_{1},\ldots,Y_{D}\right)$, $R = B/N$, and $I = JR$.  For each positive integer $k$, let $\,\left(\comp{D^{k}},\comp{\varphi^{k}}\right)$ be a homogeneous finite free $B$-resolution of $\,J^{k}$ where each $\comp[i]{D^{k}}$ is finitely generated.
 
Suppose $\,\left\{K_{i}\right\}$ is a family of $B$-ideals so that
\[ K_{i} \subseteq \sqrt{I\left(\comp[i]{\varphi^{k}}\right)} \]
for all $i$ and for all $k$. 

\begin{enumerate}
    \item[$(\mathrm{i})$]\label{SpecializationLemmaRes}
\hypertarget{SpecializationLemmaRes}{} If $\hspace{1.5mm}\hgt{K_{i}R} \ge i$ for all $\,i$, then $\,\comp{D^{k}} \otimes_{B} R$ is a homogeneous free $R$-resolution of $\,J^{k} \otimes_{B} R$ for each $k$. 
    
    \item[$(\mathrm{ii})$]\label{SpecializationLemmaApproxRes}
\hypertarget{SpecializationLemmaApproxRes}{} If $\hspace{1.5mm}\hgt{K_{i}R} \ge \min\left\{i,d-1\right\}\,$ for all $\,i$, then $\,\comp{D^{k}} \otimes_{B} R$ is an approximate $R$-resolution of $\,J^{k} \otimes_{B} R$ for each $k$. 
    
    \item[$(\mathrm{iii})$]\label{SpecializationLemmaSeq}
\hypertarget{SpecializationLemmaSeq}{} Let $\,\psi\!_{k} \colon J^{k} \otimes_{B} R \to I^{k}$ be the natural surjection. For each $k$, there is a homogeneous exact sequence of $\,R$-modules
   \[ \A{k}{J} \otimes_{B} R \to \mcal{A}_{k}\!\left(I\right) \to \left(\ker \psi\!_{k}\right)\!\left(kq\right) \to 0. \]
   \end{enumerate}

\end{lem}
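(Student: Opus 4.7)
The plan is to handle the three parts in order, using the Buchsbaum-Eisenbud acyclicity criterion for (i) and (ii), and a snake-lemma argument for (iii). For part (i), I would apply the criterion (see \cref{BECriterionNotation}) to the tensored complex $\comp{D^{k}} \otimes_{B} R$: tensoring preserves the ranks $r_{i}$, and $I(\varphi_{i}^{k})R$ contains $K_{i}R$ up to radical, so $\hgt I(\varphi_{i}^{k})R \geq \hgt K_{i}R \geq i$. The criterion yields acyclicity, and right exactness of tensoring identifies $\hmgy{0}{\comp{D^{k}} \otimes_{B} R}$ with $J^{k} \otimes_{B} R$.

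For part (ii), the depth condition in the definition of an approximate resolution is automatic: since $R$ is Cohen-Macaulay and each $\comp[j]{D^{k}} \otimes_{B} R$ is free, the depth of each term equals $\dim R$. For the dimension condition on homology, I would localize at each $\fp \in \spec{R}$ and invoke Buchsbaum-Eisenbud locally. Under the hypothesis $\hgt K_{i}R \geq \min\!\left\{i,d-1\right\}$, the local criterion can only fail at primes $\fp$ with $\hgt \fp$ forced near the maximal height, so that $\dim R/\fp$ is small; this pins the support of $\hmgy{j}{\comp{D^{k}} \otimes_{B} R}$ into a low-dimensional set and yields the bound $\dim \hmgy{j} \leq j$ for each $j \geq 1$. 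Carefully translating the height hypothesis into this codimension bound, using Cohen-Macaulayness to keep the setting equidimensional, is the main technical obstacle.

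For part (iii), the crucial ingredient is the isomorphism $\sym{k}{J(q)} \otimes_{B} R \cong \sym{k}{I(q)}$. Since $Y_{1},\ldots,Y_{D}$ is regular on $B$, the Koszul complex of $\mathbf{Y}$ is a free $B$-resolution of $R$; the assumption that $\mathbf{Y}$ is weakly regular on $B/J$ then gives $\Tor{j}{B}{B/J}{R} = 0$ for $j \geq 1$, so $\psi_{1}\colon J \otimes_{B} R \to I$ is an isomorphism, and commutation of symmetric powers with base change yields the claimed isomorphism on $k$-th symmetric powers. I would then tensor the short exact sequence $0 \to \A{k}{J} \to \sym{k}{J(q)} \to J^{k}(kq) \to 0$ with $R$, map the result into $0 \to \A{k}{I} \to \sym{k}{I(q)} \to I^{k}(kq) \to 0$ via the natural vertical maps (the middle being the base-change isomorphism and the right being $\psi_{k}$), and apply the snake lemma (after replacing $\A{k}{J} \otimes_{B} R$ by its image $V$ so that the top row is short exact). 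Because the middle vertical is an isomorphism its kernel vanishes, and the six-term sequence collapses to $\coker f \cong \ker \psi_{k}(kq)$, where $f\colon V \to \A{k}{I}$; composing with the surjection $\A{k}{J} \otimes_{B} R \twoheadrightarrow V$ yields the stated exact sequence. Without the base-change isomorphism the snake lemma would produce extra terms from $\ker g$, so this step hinges essentially on the weak regularity of $\mathbf{Y}$ on $B/J$.
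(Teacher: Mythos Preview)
Your proposal is correct and follows essentially the same approach as the paper: Buchsbaum--Eisenbud for (i), localization plus Buchsbaum--Eisenbud to bound $\dim \hmgy{j}$ for (ii), and the snake lemma applied to the two defining sequences of $\A{k}{J}$ and $\A{k}{I}$ for (iii), with $\psi_{1}$ shown to be an isomorphism via $\Tor{1}{B}{B/N}{B/J}=0$. The only place you are vague is the codimension bookkeeping in (ii); the paper makes this precise by fixing $j$ with $1 \le j \le d-1$, taking $\fp$ with $\hgt \fp \le d-j-1$, and observing that $\hgt K_{i}R_{\fp} \ge \min\{i,d-1\}$ forces $\hgt K_{i}R_{\fp} \ge i$ in $R_{\fp}$ (either directly when $i \le d-j$, or because otherwise $K_{i}R_{\fp}$ has height exceeding $\dim R_{\fp}$ and is the unit ideal), so part (i) applies locally.
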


\begin{proof}
 Proof of (i). One sees $\comp{D^{k}} \otimes_{B} R$ is a finite complex of free $R$-modules with $\hmgy{0}{\comp{D^{k}} \otimes_{B} R} \cong J^{k} \otimes_{B} R$. For each $i$, $i \le \hgt{K_{i}R}  \le \hgt{\sqrt{I\left(\comp[i]{\varphi^{k}} \otimes_{B} R\right)}} = \hgt{I\left(\comp[i]{\varphi^{k}} \otimes_{B} R\right)} = \grade{I\left(\comp[i]{\varphi^{k}} \otimes_{B} R\right)}$, since $R$ is Cohen-Macaulay. Thus, $\comp{D^{k}} \otimes_{B} R$ is a resolution by the Buchsbaum-Eisenbud criterion \cite[1.4.13]{BH}. 
   
Proof of (ii). Recall the definition of an approximate resolution from \cref{ApproximateResolutionNotation}. It suffices to show $\dim \hmgy{j}{\comp{D^{k}} \otimes_{B} R} \le  j$ for all $1 \le j \le d-1$. To do this, fix $1 \le j \le d-1$. We will show $\comp{D^{k}} \otimes_{B} R_{\fp}$ is a resolution  for all $\mf{p} \in \spec{R}$ with $\hgt{\fp} \le d-j-1$. Hence, let $\mf{p} \in \spec{R}$ with $\hgt{\fp} \le d-j-1$. Then, for all $i$, $\hgt{K_{i}R_{\fp}} \ge \min\left\{ i, d-1 \right\} \ge \min\left\{ i, d-j \right\}$. In either case, $\hgt{K_{i}R_{\fp}} \ge i$ given our assumption on $\hgt{\fp}$. Hence, by part (i), $\hmgy{i}{\comp{D^{k}} \otimes_{B} R}_{\mf{p}} = 0$ for $\hgt{\fp} \le d-j-1$. Thus, $\dim \hmgy{j}{\comp{D^{k}} \otimes_{B} R} \le j$ for all $1 \le j \le d-1$.

We now prove (iii). First we prove $\psi_{1}$ is an isomorphism. To do this, note $\ker \psi_{1} = \left(N\cap J\right)/NJ = \Tor{1}{B}{B/N}{B/J} = 0$ since $Y_{1},\ldots,Y_{D}$ is weakly regular on $B$ and on $B/J$. In particular, $\sym{k}{J} \otimes_{B} R \cong \sym{k}{I}$ for all $k$.

Thus, for each $k$, we have the following commutative diagram with exact rows.
	
\[ \begin{tikzcd}
 & \A{k}{J} \otimes_{B} R \ar[d] \ar[r] & \sym{k}{I} \ar[d, equals] \ar[r] & J^{k} \otimes_{B} R \ar[r] \ar[d, two heads, "\psi_{k}"] & 0 \\
0 \ar[r] & \A{k}{I} \ar[r] & \sym{k}{I} \ar[r]  & I^{k} \ar[r] & 0
\end{tikzcd} \]

Therefore, by the Snake Lemma, the sequence
\[ \A{k}{J} \otimes_{B} R \to \A{k}{I} \to \ker \psi\!_{k} \to 0 \]
is exact.

Keeping in mind the gradings on the symmetric algebra and the Rees ring as in \cref{ReesNotation} and \cref{SymmetricNotation}, the sequence is homogeneous provided we use $\left(\ker \psi\!_{k}\right)\left(kq\right)$.
\end{proof}

\bigskip

We will mainly use this lemma in the case described at the beginning of this section. The family of ideals $K_{i}$ will be a subcollection of the family of ideals $\sqrt{\detid{j}{X}}$, which will allow us to use the heights of the ideals $\detid{j}{A}$ to  control  $\A{}{I}$. This will be shown in \cref{ContainmentLemma}; however, we must first prove some well-known technical lemmas (see, for instance, \cite{CN} for \cref{CNAnalyticSpread} and \cite[2.4]{BV} for \cref{ReductionLemma}).

In order to prove the aforementioned technical lemmas, we need information about minimal free resolutions of $\detid{j}{X}^{k}$ and $\pfaffid{2j}{X}^{k}$. Of course the best scenario is when explicit minimal free resolutions of $\detid{j}{X}^{k}$ or $\pfaffid{2j}{X}^{k}$ are known for all $k$. There are two cases where we actually have this information, which we state as the following two propositions.

\bigskip

\begin{prop}[Akin-Buchsbaum-Weyman {\cite[5.4]{ABW}}]
\label[proposition]{ResolutionPowersHilbertBurch}
\hypertarget{ResolutionPowersHilbertBurch}{}
Let $R$ be a nonnegatively graded Noetherian ring with $R_{0}$ local, $m$ and $n$ integers satisfying $1 \le m \le n$, $X$ an $m \times n$ generic matrix over $R$, $S = \poly{R}{X}$, and $J = \detid{m}{X}$. For any positive integer $k$, let $\;\compp{F^{k}}$ be the minimal homogeneous free $S$-resolution of $\,J^{k}\!\paren{kt}$. Then $\;\compp{F^{k}}$ is a linear resolution of length $\min\!\brces{k,m}\paren{n-m}$.
\end{prop}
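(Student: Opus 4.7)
The plan is to invoke \cite[5.4]{ABW} more or less verbatim. Viewing the generic matrix $X$ as a degree-one homomorphism $\varphi \colon S(-1)^{n} \to S^{m}$ of free graded $S$-modules, Akin, Buchsbaum, and Weyman construct, for each partition $\lambda$, a Schur complex $L_{\lambda}(\varphi)$ of finite free graded $S$-modules whose differentials are natural in $\varphi$. Their theorem asserts that for the rectangular partition $\lambda = (m^{k})$ consisting of $k$ rows of length $m$, this complex is a minimal homogeneous $S$-free resolution of $J^{k}(km)$, the twist reflecting the fact that each generator of $J^{k}$ is a product of $k$ maximal minors of degree $m$.

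Linearity is built into the Schur complex construction. Each term $L_{\lambda}(\varphi)_{i}$ is, up to the overall shift, a direct sum of tensor products of Schur functors applied to $S^{m}$ and $S^{n}$, and is therefore generated in a single internal degree. Consecutive terms differ by exactly one application of $\varphi$, and since $\varphi$ has entries of degree one, every differential is a matrix of linear forms; thus the resolution is linear, as required.

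The length formula is the combinatorial statement that $L_{(m^{k})}(\varphi)_{i} = 0$ for $i > \min\{k,m\}(n-m)$, with the top nonzero term attained at $i = \min\{k,m\}(n-m)$. When $k \le m$, the partition $(m^{k})$ fits inside the $m \times n$ rectangle and the complex terminates naturally at homological degree $k(n-m)$. When $k \ge m$, the length saturates at $m(n-m)$, which is the grade of $J$ and bounds the projective dimension of any finitely generated module supported on $V(J)$.

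The only real subtlety, and the point I expect to require the most care, is verifying that \cite[5.4]{ABW} applies over the graded Noetherian base $R$ with $R_{0}$ local, rather than over $\mathbb{Z}$ or a field as usually presented. This is routine rather than hard: the Schur complex construction is universal in the entries of $\varphi$, and exactness in \cite[5.4]{ABW} is established for generic $X$ over an arbitrary commutative ring, so the statement passes directly to $S = R[X]$ without modification.
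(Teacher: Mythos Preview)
Your approach matches the paper's: the paper does not prove this proposition either but simply attributes it to \cite[5.4]{ABW}, adding that the length statement is made explicit in \cite[3.1]{BCV} and the linearity in \cite[3.6]{BCV}.

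That said, your justification for the length when $k \ge m$ contains an error. The grade of $J = \detid{m}{X}$ is $n-m+1$, not $m(n-m)$; and in any case there is no general principle bounding the projective dimension of a module by the grade of an ideal cutting out its support (nor is $J^{k}$, as an ideal, supported on $V(J)$). The saturation of the length at $m(n-m)$ must be read off from the combinatorics of the Schur complex itself (or taken from \cite[3.1]{BCV}), not deduced from an extrinsic grade argument. If you want a clean intrinsic reason, it is that for $k \ge m$ the shape $(m^{k})$ and $(m^{m})$ yield complexes of the same length because the relevant Schur functors vanish once the partition exceeds the $m \times n$ box; the grade of $J$ plays no role here.
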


\bigskip

This proposition is implicit from the proof of \cite[5.4]{ABW} but is not directly stated. A direct statement and justification for the length of the resolution can be found in \cite[3.1]{BCV}, and a direct statement for the linearity of the resolution can be found in \cite[3.6]{BCV}.

\bigskip

\begin{prop}[Kustin-Ulrich {\cite[2.7, 4.7, 4.13.b]{KU}}]
\label[proposition]{KUComplexes}
\hypertarget{KUComplexes}{}
Let $K$ be a field, $n$ an odd integer with $3 \le n$, $X$ an $n \times n$ generic alternating matrix over $R$, $S = \poly{K}{X}$, and $J = \pfaffid{n-1}{X}$. The minimal free resolution of $\,J^{k}\!\paren{k\paren{n-1}/2}$ is a complex $\compp{\mathcal{D}^{k}}$ with
\[ \compp[i]{\mathcal{D}^{k}} = \begin{cases}
S\!\paren{-i}^{\beta_{i}^{k}}, & \text{ if } i \le \min\!\brces{k,n-1} \\
S\!\paren{-\paren{i-1}-\frac{1}{2}\paren{n-i+1}}, & \text{ if } i = k+1 \le n-1 \text{ and } k \text{ is odd}\\
0, & \text{ if } i = k+1  \text{ and } k \text{ is even}\\
0, & \text{ if } i \ge  \min\!\brces{k+2,n}
\end{cases}\]
for some nonzero $\beta_{i}$.
\end{prop}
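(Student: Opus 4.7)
The plan is to derive the proposition by directly extracting and synthesizing three results from Kustin--Ulrich~\cite{KU}: 2.7 (the construction of the complexes), 4.7 (acyclicity and minimality), and 4.13.b (the explicit ranks and twists). Since the statement is already attributed to those results, the task is primarily one of careful translation and verification that the shape claimed here matches the complex they construct.

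First, recall that for odd $n$, the ideal $J = \pfaffid{n-1}{X}$ is a perfect Gorenstein ideal of grade $3$ in $S$, minimally generated by the $n$ sub-maximal Pfaffians of $X$, each of degree $(n-1)/2$, by Buchsbaum--Eisenbud. Using this structure, \cite[2.7]{KU} constructs, for each positive integer $k$, a complex $\compp{\mathcal{D}^{k}}$ of finitely generated graded free $S$-modules built from tensor products of symmetric and divided powers of modules associated with $X$, with differentials encoding the Pfaffian multiplication. After twisting by $k(n-1)/2$, the acyclicity and minimality statement \cite[4.7]{KU} then says $\compp{\mathcal{D}^{k}}$ is a minimal homogeneous free $S$-resolution of $J^{k}\!\paren{k(n-1)/2}$.

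The explicit shape of each $\compp[i]{\mathcal{D}^{k}}$ follows from \cite[4.13.b]{KU}. In the linear range $i \le \min\{k, n-1\}$, every generator of $\compp[i]{\mathcal{D}^{k}}$ lies in the degree-$i$ strand after the twist, giving $S(-i)^{\beta_{i}^{k}}$ with nonzero $\beta_{i}^{k}$. The vanishing for $i \ge \min\{k+2, n\}$ encodes simultaneously the Auslander--Buchsbaum length bound for $J^{k}$ and the observation that once the ``linear'' portion of the resolution closes off no further terms appear. When $k$ is even the strand closes at $i = \min\{k, n-1\}$, so $\compp[k+1]{\mathcal{D}^{k}} = 0$; when $k$ is odd and $k+1 \le n-1$, a parity feature of the Kustin--Ulrich construction inserts a single exceptional summand at position $k+1$, whose twist $-(i-1) - (n-i+1)/2$ is obtained by combining the positional shift $-(i-1)$ of the linear strand with the internal degree $(n-i+1)/2$ contributed by a Pfaffian factor at the tail.

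The main obstacle is faithfully matching the conventions of \cite{KU} — twist normalizations, sign conventions for Pfaffians, and the indexing of $\compp{\mathcal{D}^{k}}$ — to the formulation here. In particular, verifying the precise twist $-(i-1) - (n-i+1)/2$ at the exceptional position requires tracking the internal gradings of the building blocks of $\compp[i]{\mathcal{D}^{k}}$ and confirming that the parity of $k$ really produces the dichotomy asserted in the middle two cases. Once these identifications are made, the proposition becomes a direct restatement of the three cited results.
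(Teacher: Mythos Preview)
Your proposal is correct and takes essentially the same approach as the paper: both treat the proposition as a direct restatement of \cite[2.7, 4.7, 4.13.b]{KU}, with the work consisting entirely of translating conventions. The only difference is that the paper, rather than sketching the translation itself, points the reader to \cite[Proof of 6.1.b]{KPU} (specifically equations (6.1.4)--(6.1.7)) where this exact reformulation has already been carried out.
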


\bigskip

Although proven in \cite[2.7, 4.7, 4.13.b]{KU}, the details of the above formulation are given in \cite[Proof of 6.1.b]{KPU}, specifically in equations (6.1.4)-(6.1.7).

For other types of determinantal and Pfaffian ideals, we do not know explicit resolutions of $\detid{j}{X}^{k}$ or $\pfaffid{2j}{X}^{k}$ for all $k$. Nevertheless, we are able to compute properties of such ideals as in the following lemma. 

\bigskip

\begin{lem}
\label{CNAnalyticSpread}
\hypertarget{CNAnalyticSpread}{} Let $R$ be a polynomial ring in $d$ indeterminates over the field $K$. 
\begin{enumerate}
    \item[$(\mathrm{i})$] Let $X$ be an $m \times n$ generic matrix over $R$ with $\;1 \le t = m \le n$, $B=R\!\left[X\right]$, and $J=\detid{m}{X}$.  Then $\max_{k}\!\left\{\pdim{B}{J^{k}}\right\}  = m\left(n-m\right)$.
    
    \item[$(\mathrm{ii})$] Let $X$ be an $m \times n$ generic matrix over $R$ with $\;1 \le t < m \le n$, $B=R\!\left[X\right]$, and $J=\detid{t}{X}$. Then $\max_{k}\!\left\{\pdim{B}{J^{k}}\right\}  = mn-1$.
    
    \item[$(\mathrm{iii})$] Let $X$ be an $n \times n$ generic symmetric matrix over $R$, $B=R\!\left[X\right]$, $\;1 \le t < n$, and $J=\detid{t}{X}$. Then $\max_{k}\!\left\{\pdim{B}{J^{k}}\right\}  = \binom{n+1}{2}-1$.
    
     \item[$(\mathrm{iv})$] Let $X$ be an $n \times n$ generic alternating matrix over $R$, $\;2 \le 2t = n-1$,  $B=R\!\left[X\right]$, and $J=\pfaffid{n-1}{X}$. Then $\max_{k}\!\left\{\pdim{B}{J^{k}}\right\}  =  n-1$.
     
     \item[$(\mathrm{v})$] Let $X$ be an $n \times n$ generic alternating matrix over $R$, $B=R\!\left[X\right]$, $\;2 \le 2t < n-1$, and $J= \pfaffid{2t}{X}$.  Then $\max_{k}\!\left\{\pdim{B}{J^{k}}\right\}  = \binom{n}{2}-1$.
\end{enumerate}
\end{lem}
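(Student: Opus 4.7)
The five parts divide into two regimes. For parts (i) and (iv), an explicit minimal free resolution of every power is available---\cref{ResolutionPowersHilbertBurch} for (i) and \cref{KUComplexes} for (iv), the latter after extending scalars flatly from $\poly{K}{X}$ to $B$---so the desired supremum can simply be read off. Parts (ii), (iii), and (v) lack any such explicit resolution, and I will handle them uniformly by combining flat base change with Burch's inequality.

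For (i), the Akin-Buchsbaum-Weyman resolution has length $\min\!\brces{k,m}(n-m)$, so $\pdim{B}{J^{k}} = \min\!\brces{k,m}(n-m)$, and the supremum $m(n-m)$ is attained whenever $k\ge m$. For (iv), a case split on the parity of $k$ in the module descriptions of $\compp{\mathcal{D}^{k}}$ gives $\pdim{B}{J^{k}} = \min\!\brces{k+1,n-1}$ when $k$ is odd and $\pdim{B}{J^{k}} = \min\!\brces{k,n-1}$ when $k$ is even; in particular $\pdim{B}{J^{k}} \le n-1$ for every $k$, with equality achieved for all $k \ge n-2$.

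For parts (ii), (iii), and (v), write $T=\poly{K}{X}$ and $N=\dim T$ (so $N=mn$, $\binom{n+1}{2}$, or $\binom{n}{2}$ in the three cases), and let $I\subseteq T$ be the corresponding generic determinantal or Pfaffian ideal, so that $J=IB$. Since $B \cong R\otimes_{K} T$ is a faithfully flat extension of $T$ with $J^{k}=I^{k}B$, flat base change yields $\pdim{B}{J^{k}} = \pdim{T}{I^{k}}$, so it suffices to prove $\max_{k}\!\brces{\pdim{T}{I^{k}}} = N-1$. The upper bound is immediate: $T$ is a regular domain, so each nonzero ideal $I^{k}$ is torsion-free with $\depth{T}{I^{k}}\ge 1$, and Auslander-Buchsbaum gives $\pdim{T}{I^{k}} \le N-1$. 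For the lower bound I invoke Burch's inequality (\cref{AnalyticSpreadNotation}):
\[ \aspread{I} \,+\, \inf_{k}\!\brces{\depth{T}{T/I^{k}}} \;\le\; N. \]
Provided $\aspread{I}=N$, this forces $\inf_{k}\!\brces{\depth{T}{T/I^{k}}}=0$, hence $\inf_{k}\!\brces{\depth{T}{I^{k}}}=1$, and a second application of Auslander-Buchsbaum gives $\max_{k}\!\brces{\pdim{T}{I^{k}}}\ge N-1$.

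The main obstacle is therefore the verification that $\aspread{I}=N$ in each of the non-maximal cases (ii), (iii), and (v). This reduces to the standard identification of the special fiber: $\fiber{I}$ is isomorphic as a graded $K$-algebra to the subalgebra of $T$ generated by the $t$-minors (respectively $2t$-Pfaffians), and one argues that its fraction field coincides with $\mathrm{Frac}(T)$, so that the subalgebra has Krull dimension $N$. Concretely, one exhibits each indeterminate appearing in $X$ as an explicit rational expression in ratios of these generators---a classical computation for non-maximal generic determinantal and Pfaffian ideals and the technical core of the lemma.
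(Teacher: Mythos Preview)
Your approach is essentially the paper's: explicit resolutions for (i) and (iv), and for (ii), (iii), (v) the combination of Hilbert's Syzygy/Auslander--Buchsbaum for the upper bound with Burch's inequality and $\aspread{I}=N$ for the lower bound. One inaccuracy worth fixing: in the analytic spread step you assert that the fraction field $F$ of the fiber \emph{coincides} with $\mathrm{Frac}(T)$, i.e.\ that each $X_{ij}$ is a rational function in the $t$-minors. This is false in general---for instance, when $m=n=t+1$ the minors all have degree $t$, so every element of $F$ has degree divisible by $t$, while $\det X$ has degree $t+1$; hence $\det X\notin F$ and thus not every $X_{ij}$ lies in $F$. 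What the paper actually proves (and what suffices) is that $\mathrm{Frac}(T)$ is \emph{algebraic} over $F$: from $\adj X\cdot X=(\det X)I$ one gets $(\det X)^{m-1}=\det(\adj X)\in F$, so $\det X$ is algebraic over $F$, and then Cramer's rule places each $X_{ij}$ in $F(\det X)$. Algebraicity is enough to conclude $\trdeg_K F=N$ and hence $\aspread{I}=N$.
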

\begin{proof}
Since $B = R\!\left[X\right]$ is a polynomial ring over $K\!\left[X\right]$ and since the generators of $J^{k}$ are elements of $K\!\left[X\right]$, $\pdim{B}{J^{k}} = \pdim{\polySmall{K}{X}}{J^{k}}$, so we may reduce to the case $R = K$. 

To prove (i) and (iv), we use the known free resolutions given in \cref{ResolutionPowersHilbertBurch} and \cref{KUComplexes}, respectively.

For (ii), (iii), and (v), by Hilbert's Syzygy Theorem, we have $\pdim{B}{\;B/J^{k}} \le \dim B$ for all $k$; thus, $\max_{k}\!\left\{\pdim{B}{\;J^{k}}\right\} \le \dim B -1$. On the other hand, we can use Burch's inequality (see \cref{AnalyticSpreadNotation}) and the Auslander-Buchsbaum Formula to obtain $\aspread{J}-1 \leq \max_{k}\!\left\{\pdim{B}{J^k}\right\}$. Thus, after we have shown $\aspread{J} = \dim B$, the results obtain.

We repurpose the argument found in \cite{CN} to compute the analytic spread in case (ii). For cases (iii) and (v) a similar argument will hold with some small adjustments. Let $\eltlist{\Delta}{1}{s}$ be the $t \times t$ minors of $X$, and let $F := K\!\left( \eltlist{\Delta}{1}{s} \right)$ be the fraction field of $\fiber{J}$. We first perform the computation in the special case where $n = m = t+1$.

It follows from $\adj X \cdot X = \left( \det X \right) \cdot I_{m \times m}$ that $\left( \det X \right)^{m-1} = \det{ \left( \adj X \right)} \in F$. Hence $F \subseteq F\!\left(\det X \right)$ is an algebraic extension.

Now use $\adj X \cdot X = \left( \det X \right) \cdot I_{m \times m}$ with Cramer's rule to see $X_{ij} \in F\!\left( \det X \right)$ for all $i$ and $j$. Specifically, we have \[ X_{ij} = \frac{\det{\left(Z_{ij}\right)}}{ \det{\left(\adj X\right)}}    \in F(\det X) \tag{4.3.1}\label{4.3.1}\] where $Z_{ij}$ is the matrix obtained from $\adj X$ by replacing column $j$ with column $i$ from $\left( \det X \right) \cdot I_{m \times m}$. Thus we see $X_{ij}$ is algebraic over $F$ for all $i$ and $j$.

In the general case we instead consider a submatrix $X'$ of $X$ with size $(t+1) \times (t+1)$. The special case above shows the entries of $X'$ are algebraic over the subfield generated by the $t \times t$ minors of $X'$. Finally we allow $X'$ to vary over all of the $(t+1) \times (t+1)$ submatrices of $X$, whence $X_{ij}$ is algebraic over $F = \quot{\fiber{J}}$ for all $i$ and $j$. Hence, \[\aspread{J} = \dim \fiber{J} = \trdeg_{K} \quot{\fiber{J}} = \dim B.\]

For case (v) we note some adjustments necessary to proceed with the same argument as above. When considering a submatrix one only uses principal submatrices of size $2t+2$, and one uses the Pfaffian adjoint as discussed in \cref{AdjNotation} in place of the classical adjoint. 
\end{proof}

\bigskip

After localization, a generic matrix remains generic though of smaller dimensions. This well-known reduction is critical to our methods, so we state and prove it here.

\bigskip

\begin{lem}
\label{ReductionLemma}
\hypertarget{ReductionLemma}{}
Let $B$ be a Noetherian ring, $1 \le j \le t \le m \le n$, $X = \left(\begin{array}{c|c}
U & V \\
\hline
W & Z
\end{array}\right)$ be an $m \times n$ generic matrix over $B$ with $U$ a $j \times j$ matrix, and $Y = \left(Y_{rs}\right)$ be an $\left(m-j\right) \times \left(n-j\right)$ generic matrix over $B$ Write $\Delta = \det U$ and $C = B\!\left[U,V,W, \Delta^{-1}\right]$. There is an $C$-algebra isomorphism
\[ \varphi \colon C\!\left[Z\right] \to C\!\left[Y\right] \]
given by
\[ Z_{rs} \mapsto \left(Y + WU^{-1}V\right)_{rs}.  \]

Moreover, the extension of $\detid{t}{X}C\!\left[Z\right]$ along $\varphi$ is $\detid{t-j}{Y}C\!\left[Y\right]$.
\end{lem}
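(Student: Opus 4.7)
The plan has two essentially independent pieces: constructing $\varphi$ as an isomorphism, then computing the image of the determinantal ideal using block Gaussian elimination over the localized ring $C$.

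First, I would verify $\varphi$ is a $C$-algebra isomorphism by writing down an inverse explicitly. Both $C[Z]$ and $C[Y]$ are polynomial rings over $C$ in $(m-j)(n-j)$ indeterminates (the entries of $Z$ and $Y$, respectively), and the matrix $WU^{-1}V$ has entries in $C$ because $U$ is invertible over $C$ (since $\Delta$ is a unit). So $\varphi$ is just an affine change of variables over $C$, with inverse $\psi \colon C[Y]\to C[Z]$ given by $Y_{rs} \mapsto (Z - WU^{-1}V)_{rs}$. A direct check on generators shows $\varphi\psi$ and $\psi\varphi$ are the respective identity maps.

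Second, to compute the extension of $\detid{t}{X}\,C[Z]$ along $\varphi$, I would perform block row and column operations over $C[Z]$, exploiting the invertibility of $U$. Specifically, the identity
\[
\begin{pmatrix} I_j & 0 \\ -WU^{-1} & I_{m-j} \end{pmatrix}
\begin{pmatrix} U & V \\ W & Z \end{pmatrix}
\begin{pmatrix} I_j & -U^{-1}V \\ 0 & I_{n-j} \end{pmatrix}
= \begin{pmatrix} U & 0 \\ 0 & Z - WU^{-1}V \end{pmatrix}
\]
holds in $C[Z]$, and the two outer matrices have determinant $1$ and hence are invertible. Therefore $\detid{t}{X}\,C[Z]$ equals the ideal of $t\times t$ minors of the block diagonal matrix on the right. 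Applying $\varphi$ converts $Z - WU^{-1}V$ into $Y$, so $\varphi\bigl(\detid{t}{X}\,C[Z]\bigr)$ is the ideal of $t\times t$ minors of $\begin{pmatrix} U & 0 \\ 0 & Y \end{pmatrix}$ inside $C[Y]$.

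Third, I would compute these minors via the block structure. A $t\times t$ minor of a block diagonal matrix is nonzero only when it is a product of an $a\times a$ minor of the upper block and a $(t-a)\times(t-a)$ minor of the lower block; this yields
\[
\detid{t}{\begin{pmatrix} U & 0 \\ 0 & Y \end{pmatrix}} \;=\; \sum_{a} \detid{a}{U}\cdot \detid{t-a}{Y},
\]
the sum taken over $\max\{0,t-(m-j)\}\le a\le\min\{t,j\}$. Now, because Laplace expansion gives $\detid{a+1}{U}\subseteq\detid{a}{U}$, and because $\Delta\in\detid{j}{U}$ is a unit in $C$, every $\detid{a}{U}$ extends to the unit ideal in $C$ for $0\le a\le j$. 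Using the analogous nesting $\detid{t-a}{Y}\subseteq \detid{t-j}{Y}$ for $a\le j$, the sum collapses to $\detid{t-j}{Y}\,C[Y]$, which is what we wanted. The only mildly subtle point, and arguably the main thing to be careful about, is keeping track of the admissible range of $a$ (in particular that $a=j$ is always allowed, which uses the hypotheses $j\le t\le m\le n$) so that the dominant term $\detid{t-j}{Y}$ actually appears.
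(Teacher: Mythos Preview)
Your proof is correct and follows essentially the same approach as the paper: the same explicit inverse $\psi$, the same block Gaussian elimination identity, and the same conclusion that the determinantal ideal passes to $\detid{t-j}{Y}$. You are simply more explicit than the paper in justifying why the $t\times t$ minors of the block-diagonal matrix generate exactly $\detid{t-j}{Y}C[Y]$; the paper states this step without unpacking the sum over $a$.
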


\begin{proof} First notice $\detid{t}{X} C\!\left[Z\right] = \detid{t-j}{\wtld X} C\!\left[Z\right]$, where $\wtld X = Z-WU^{-1}V$ is an $(m-j)\times(n-j)$ matrix with entries in $C\!\left[Z\right]$. This is shown by the following block matrix computation performed using elementary row and column operations defined over $C$:

\[ \left(\begin{array}{c|c}
I & 0\\
\hline
-WU^{-1} & I
\end{array}\right)
\left(\begin{array}{c|c}
U & V\\
\hline
W & Z
\end{array}\right)
\left(\begin{array}{c|c}
I & -U^{-1}V\\
\hline
0 & I
\end{array}\right) =  \left(\begin{array}{c|c}
U & 0\\
\hline
0 & Z-WU^{-1}V
\end{array}\right).\]

Therefore, the image of $\detid{t}{X}C\!\left[Z\right]$ via $\varphi$ is $\detid{t-j}{Y}C\!\left[Y\right]$. 

To see $\varphi$ is an isomorphism simply consider the $C$-algebra homomorphism $\psi \colon C\!\left[Y\right] \to C\!\left[Z\right]$ defined by $Y_{rs} \mapsto \left(Z - WU^{-1}V\right)_{rs}$. One can see these maps are inverse to each other.
\end{proof}

\bigskip

We note the above proof also holds assuming $X$ and $Y$ are generic symmetric. In this case, $U$ and $Z$ are symmetric and $W = V^{T}$; hence, the maps $\varphi$ and $\psi$ are well-defined and send corresponding entries of $\widetilde{X}$ and $Y$ to each other.  The above proof also holds assuming $X$ and $Y$ are generic alternating and replacing ideals of minors with ideals of Pfaffians, provided we assume $t$ and $j$ are even. Indeed, in this case, $U$ and $Z$ are alternating and $W = -V^{T}$; hence, the maps $\varphi$ and $\psi$ are well-defined and send corresponding entries of $\widetilde{X}$ and $Y$ to each other.

\bigskip

\begin{lem}
\label{ContainmentLemma}
\hypertarget{ContainmentLemma}{}
Let $K$ be a field, and suppose $R$ is a polynomial ring in $d$ indeterminates over $K$.
\begin{enumerate}
    \item[$(\mathrm{i})$] Let $X$ be an $m \times n$ generic matrix over $R$ with $1 \le t = m \le n$, $B=R\!\left[X\right]$, and $J=\detid{m}{X}$. Further, for each $k$, let $\left(\comp[\bullet]{D^{k}},\comp{\varphi^{k}}\right)$ be a finite free $B$-resolution of $J^{k}$ where each $\comp[i]{D^{k}}$ is finitely generated. For each $1 \le j \le m-1$ and for all $k$, \[\sqrt{\detid{j}{X}} \subseteq \sqrt{\detid{}{\comp[i]{\varphi^{k}}}} \text{ whenever } i \ge \left(m-j\right)\left(n-m\right)+1.\]
    
    \item[$(\mathrm{ii})$] Let $X$ be an $m \times n$ generic matrix over $R$ with $1 \le t < m \le n$, $B=R\!\left[X\right]$, and $J=\detid{t}{X}$. Further, for each $k$, let $\left(\comp[\bullet]{D^{k}},\comp{\varphi^{k}}\right)$ be a finite free $B$-resolution of $J^{k}$ where each $\comp[i]{D^{k}}$ is finitely generated. For each $1 \le j \le t-1$ and for all $k$, \[\sqrt{\detid{j}{X}} \subseteq \sqrt{\detid{}{\comp[i]{\varphi^{k}}}} \text{ whenever } i \ge \left(m-j\right)\left(n-j\right).\]
    
    \item[$(\mathrm{iii})$] Let $X$ be an $n \times n$ generic symmetric matrix over $R$, $B=R\!\left[X\right]$, $1 \le t \le n$, and $J=\detid{t}{X}$. Further, for each $k$, let $\left(\comp[\bullet]{D^{k}},\comp{\varphi^{k}}\right)$ be a finite free $B$-resolution of $J^{k}$ where each $\comp[i]{D^{k}}$ is finitely generated. For each $1 \le j \le t-1$ and for all $k$,\[\sqrt{\detid{j}{X}} \subseteq \sqrt{\detid{}{\comp[i]{\varphi^{k}}}} \text{ whenever } i \ge \binom{n-j+1}{2}.\]
    
     \item[$(\mathrm{iv})$] Let $X$ be an $n \times n$ generic alternating matrix over $R$, $2 \le 2t = n-1$,  $B=R\!\left[X\right]$, and $J=\pfaffid{n-1}{X}$. Further, for each $k$, let $\left(\comp[\bullet]{D^{k}},\comp{\varphi^{k}}\right)$ be a finite free $B$-resolution of $J^{k}$ where each $\comp[i]{D^{k}}$ is finitely generated. For each $2 \le 2j \le n-3$ and for all $k$, \[\sqrt{\pfaffid{2j}{X}} \subseteq \sqrt{\detid{}{\comp[i]{\varphi^{k}}}} \text{ whenever } i \ge n-2j.\]
     
     \item[$(\mathrm{v})$] Let $X$ be an $n \times n$ generic alternating matrix over $R$, $B=R\!\left[X\right]$, $2 \le 2t < n-1$, and $J=\pfaffid{2t}{X}$. Further, for each $k$, let $\left(\comp[\bullet]{D^{k}},\comp{\varphi^{k}}\right)$ be a finite free $B$-resolution of $J^{k}$ where each $\comp[i]{D^{k}}$ is finitely generated. For each $1 \le j \le t-1$ and for all $k$, \[\sqrt{\pfaffid{2j}{X}} \subseteq \sqrt{\detid{}{\comp[i]{\varphi^{k}}}} \text{ whenever } i \ge \binom{n-2j}{2}.\]
\end{enumerate}
\end{lem}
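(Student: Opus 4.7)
The plan is to prove all five parts uniformly by a prime-by-prime argument. Per \cref{BECriterionNotation}, $\detid{}{\comp[i]{\varphi^{k}}}$ abbreviates $\detid{r_{i}}{\comp[i]{\varphi^{k}}}$ with $r_{i} = \sum_{\ell \geq i}(-1)^{\ell-i}\rank{\comp[\ell]{D^{k}}}$; because $\comp{D^{k}}$ is an exact complex of free modules, $r_{i}$ coincides with $\rank{\comp[i]{\varphi^{k}}}$. Taking contrapositives of radical containment, it suffices to show: for every prime $\fp \in \spec{B}$ with $\detid{j}{X} \not\subseteq \fp$ (or $\pfaffid{2j}{X} \not\subseteq \fp$ in cases (iv)--(v)), the ideal $\detid{r_{i}}{\comp[i]{\varphi^{k}}}$ also fails to lie in $\fp$ once $i$ meets the stated bound.

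Given such a $\fp$, choose a suitable $j \times j$ minor, or $2j \times 2j$ principal Pfaffian, $\Delta \notin \fp$. In the symmetric setting one may require $\Delta$ to be principal, since a symmetric matrix has rank less than $j$ precisely when all its principal $j \times j$ minors vanish, so $\sqrt{\detid{j}{X}}$ equals the radical of the ideal of principal $j \times j$ minors; in the alternating setting the choice is automatic as $\pfaffid{2j}{X}$ is generated by principal Pfaffians. After permuting rows and columns of $X$ (by simultaneous conjugation in the symmetric and alternating settings), \cref{ReductionLemma} together with its symmetric/alternating analogs yields an isomorphism $B_{\Delta} \cong C[Y]$ under which $J$ corresponds to $\detid{t-j}{Y}C[Y]$ or $\pfaffid{2(t-j)}{Y}C[Y]$ in a smaller generic matrix $Y$ of the appropriate type, with $C$ a localization of a polynomial ring over $K$. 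Since localization cannot raise projective dimension, \cref{CNAnalyticSpread} applied to $Y$ in the corresponding case furnishes
\[ \pdim{B_{\Delta}}{(J^{k})_{\Delta}} \leq P, \]
where $P$ equals $(m-j)(n-m)$, $(m-j)(n-j)-1$, $\binom{n-j+1}{2}-1$, $n-2j-1$, or $\binom{n-2j}{2}-1$ in cases (i)--(v), and one verifies by inspection that $P+1$ agrees with the bound on $i$ claimed in each part. The only subcase not covered by \cref{CNAnalyticSpread} is the symmetric one with $t = n$, but there $J_{\Delta}$ is principal on the regular element $\det Y$ so $P = 0 \leq \binom{n-j+1}{2}-1$ trivially.

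Now $\comp{D^{k}}_{\Delta}$ is an exact complex of finitely generated free $B_{\Delta}$-modules resolving $(J^{k})_{\Delta}$, and since $\pdim{B_{\Delta}}{(J^{k})_{\Delta}} \leq P$ the $P$-th syzygy of $(J^{k})_{\Delta}$---namely, the image of $\comp[P]{\varphi^{k}}_{\Delta}$---is projective, hence free. Iteratively splitting the short exact sequences in the tail (each has free cokernel), the differential $\comp[i]{\varphi^{k}}_{\Delta}$ for every $i \geq P+1$ becomes a split injection onto a free summand of $\comp[i-1]{D^{k}}_{\Delta}$. In an adapted free basis its matrix contains an $r_{i} \times r_{i}$ identity block, whence $\detid{r_{i}}{\comp[i]{\varphi^{k}}}_{\Delta}$ contains $1$ and equals $B_{\Delta}$. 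Localizing further to $\fp$ gives $\detid{r_{i}}{\comp[i]{\varphi^{k}}} \not\subseteq \fp$, as required.

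The principal obstacle is the splitting argument of the last paragraph: one must combine the Buchsbaum--Eisenbud rank identity $r_{i} = \rank{\comp[i]{\varphi^{k}}}$ (which holds in any exact complex of free modules) with the structure of the tail of a free resolution of a free module in order to identify $\detid{r_{i}}{\comp[i]{\varphi^{k}}}_{\Delta}$ with the unit ideal. Everything else is a direct assembly of \cref{ReductionLemma} and \cref{CNAnalyticSpread}, with the only bookkeeping being to match $P+1$ with the bounds displayed in the five parts.
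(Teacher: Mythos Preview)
Your proof is correct and follows essentially the same route as the paper: a prime-by-prime (contrapositive) argument that combines \cref{ReductionLemma} with the projective-dimension bounds of \cref{CNAnalyticSpread} to conclude that $I(\comp[i]{\varphi^{k}})$ becomes the unit ideal after localization. You supply more detail than the paper in two places---the splitting argument explaining why $\pdim{}{J^{k}_{\fp}} < i$ forces $I(\comp[i]{\varphi^{k}})_{\fp} = B_{\fp}$, and the observation about principal minors in the symmetric case---while the paper simply asserts ``it suffices to show $\pdim{B_{\fp}}J^{k}_{\fp} < i$'' and declares (iii)--(v) similar; one small caution is that your ``projective, hence free'' step is stated over $B_{\Delta}$ rather than the local ring $B_{\fp}$, but since you localize to $\fp$ anyway this is harmless.
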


\bigskip

\begin{proof} 

We prove parts (i) and (ii). Parts (iii)-(v) are similar.

Fix $1 \le j \le t-1$. We will show that if $\fp \in \spec{B}\setminus V\!\left(\detid{j}{X}\right)$, then $\fp \in \spec{B}\setminus V\!\left(\detid{}{\comp[i]{\varphi^{k}}}\right)$ for the appropriate values of $i$, and for all $k$. Let $\fp \in \spec{B}\setminus V\!\left(\detid{j}{X}\right)$. It suffices to show $\pdim{B_{\fp}}J^{k}_{\fp} < i$ for all $i$ in the appropriate range and for all $k$. We use \cref{ReductionLemma}, where the notation is consistent with our current setting. We see $B_{\fp}$ is a localization of $C\left[Z\right]$ from \cref{ReductionLemma}. Therefore \[\pdim{B_{\fp}} J^{k}_{\fp} \le \pdim{C\left[Z\right]} \detid{t}{X}^{k} = \pdim{C\left[Y\right]} \detid{t-j}{Y}^{k} = \pdim{R\left[U,\Delta^{-1},Y\right]}\detid{t-j}{Y}^{k} \]\[= \pdim{R\left[U,Y\right]_{\Delta}} \detid{t-j}{Y}^{k} \le \pdim{R\left[U,Y\right]}\detid{t-j}{Y}^{k} = \pdim{K\left[Y\right]}\detid{t-j}{Y}^{k} \] for all $k$. The above string of equalities and inequalities follows from two main facts. First, if $A$ is a Noetherian ring and $\mathfrak{a}$ is an $A$-ideal, then $\pdim{A_{\fp}}{\mathfrak{a}_{\fp}} \le \pdim{A}{\mathfrak{a}}$ for all $\fp \in \spec{A}$. Second, if $A$ is a Noetherian ring and $\mathfrak{a}$ is an $\poly{A}{x}$-ideal whose generators are elements of $A$, then $\pdim{\polySmall{A}{x}}{\mathfrak{a}} = \pdim{A}{\mathfrak{a}\cap A}$. Now apply \cref{CNAnalyticSpread} to obtain an upper bound on $\pdim{K\left[Y\right]}\detid{t-j}{Y}^{k}$, and hence, on $\pdim{B_{\fp}} J^{k}_{\fp}$.
\end{proof}

\bigskip

The next proposition addresses cases where the Rees ring specializes. Much of the proposition is a direct consequence of \cite[1.1]{EH}, and (i) is even given in \cite[3.5]{EH}. A crucial hypothesis in \cite[1.1]{EH} is that $\rees{J}$ is Cohen-Macaulay. For most of the cases below, it is known that $\rees{J}$ is Cohen-Macaulay, but we do not know whether $\rees{J}$ is Cohen-Macaulay in case (ii) when $0 < \CHAR K \le \min\brces{t,m-t}$, in case (iii), or in case (v) when $0 < \CHAR K \le \min\brces{2t,n-2t}$. We note that our method to conclude the Rees ring specializes does not rely on $\rees{J}$ being Cohen-Macaulay. Hence, the lemma below provides new results in the cases listed above. It also provides a new method even in the cases which were previously known. Additionally, we will make use of the following result to guarantee certain determinantal and Pfaffian ideals are of linear type or of fiber type.

\bigskip

\begin{prop}
\label{SpecializationRees}
\hypertarget{SpecializationRees}{}
Let $K$ be a field. Suppose $R = K\!\left[x_{1},\ldots,x_{d}\right]$ is a polynomial ring in $d$ indeterminates over $K$.
\begin{enumerate}
    \item[$(\mathrm{i})$] Let $A$ be an $m \times n$ matrix over $R$ with $1 \le t = m \le n$, $I = \detid{m}{A}$ be of generic height, $X$ be a generic $m \times n$ matrix over $R$, $B=R\!\left[X\right]$, and $J=\detid{m}{X}$. If 
    \[ \hgt{\detid{j}{A}} \ge \left(m-j+1\right)\left(n-m\right)+1 \hspace{1mm}\text{ for all }\hspace{1mm} 1 \le j \le m-1, \]
    then $\;\rees{J} \otimes_{B} R \cong \rees{I}$ via the natural map. Moreover, $\rees{I}$ is Cohen-Macaulay.
    
    \item[$(\mathrm{ii})$] Let $A$ be an $m \times n$ matrix over $R$, $1 \le t < m \le n$, $I = \detid{t}{A}$ be of generic height, $X$ be a generic $m \times n$ matrix over $R$,  $B=R\!\left[X\right]$, and $J=\detid{t}{X}$. 
    If 
    \[ \hgt{\detid{j}{A}} \ge \left(m-j+1\right)\left(n-j+1\right) \hspace{1mm}\text{ for all }\hspace{1mm} 1 \le j \le t-1, \]
    then $\;\rees{J} \otimes_{B} R \cong \rees{I}$ via the natural map. 
    
    If, in addition, $\;\chr K = 0$ or $\;\chr K > \min\!\left\{t, m-t \right\}$, then $\;\rees{I}$ is Cohen-Macaulay.

    \item[$(\mathrm{iii})$] Let $A$ be an $n \times n$  symmetric matrix over $R$, $1 \le t \le n$, $I = \detid{t}{A}$ be of generic symmetric height, $X$ be an $n \times n$ generic symmetric matrix over $R$, $B=R\!\left[X\right]$, and $J=\detid{t}{X}$. 
    If 
    \[ \hgt{\detid{j}{A}} \ge \binom{n-j+2}{2} \hspace{1mm}\text{ for all }\hspace{1mm} 1 \le j \le t-1, \]
    then $\;\rees{J} \otimes_{B} R \cong \rees{I}$ via the natural map.

     \item[$(\mathrm{iv})$] Let $A$ be an $n \times n$  alternating matrix over $R$ with $n$ odd, $2 \le 2t = n-1$, $I = \pfaffid{n-1}{A}$ be of generic alternating height, $X$ be an $n \times n$ generic alternating matrix over $R$, $B=R\!\left[X\right]$, and $J=\pfaffid{n-1}{X}$. 
     If 
    \[ \hgt{\pfaffid{2j}{A}} \ge n-2j+2 \hspace{1mm}\text{ for all }\hspace{1mm} 2 \le 2j \le n-3, \]
    then $\;\rees{J} \otimes_{B} R \cong \rees{I}$ via the natural map. Moreover, $\,\rees{I}$ is Cohen-Macaulay.
     
     \item[$(\mathrm{v})$] Let $A$ be an $n \times n$ alternating matrix over $R$, $2 \le 2t < n-1$, $I = \pfaffid{2t}{A}$ be of generic alternating height, $X$ be an $n \times n$ generic alternating matrix over $R$, $B=R\!\left[X\right]$, and $J=\pfaffid{2t}{X}$. 
     If 
    \[ \hgt{\pfaffid{2j}{A}} \ge \binom{n-2j+2}{2} \hspace{1mm}\text{ for all }\hspace{1mm} 2 \le 2j \le 2t-2, \]
    then $\;\rees{J} \otimes_{B} R \cong \rees{I}$ via the natural map. 
    
    If, in addition, $\,\chr K = 0$ or $\,\chr K > \min\!\left\{2t, n-2t \right\}$, then $\;\rees{I}$ is Cohen-Macaulay.
\end{enumerate}
\end{prop}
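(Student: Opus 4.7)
The plan is to apply \cref{SpecializationLemma}. In each of the five cases, set $B$ to be the appropriate polynomial ring $R\!\left[X\right]$ with $X$ the corresponding generic matrix (ordinary, symmetric, or alternating), let $J$ denote the ideal of minors or Pfaffians of $X$ in $B$, and let $N$ be the $B$-ideal generated by the entries of $X-A$. Then $B/N \cong R$ and the image of $J$ in $B/N$ is exactly $I$, so the setup of the lemma applies. The first step is to verify that the generators of $N$ form a weakly regular sequence on both $B$ and on $B/J$: on $B$ this is automatic because $B$ is a polynomial ring over $R$, while on $B/J$ it follows from the classical Cohen-Macaulayness of $B/J$ (valid in all five cases) together with the dimension count $\dim\!\left(B/\left(J+N\right)\right) = \dim\!\left(R/I\right) = \dim B - \hgt{J} - \gens{B}{N}$, which relies on the standing assumption that $I$ is of generic height.

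With the setup in place, for each $k \ge 1$ fix a finite homogeneous free $B$-resolution $\left(\compp{D^k}, \comp{\varphi^k}\right)$ of $J^k$. The goal is to invoke \cref{SpecializationLemma}(i), which requires a family $\left\{K_i\right\}$ of $B$-ideals with $K_i \subseteq \sqrt{I\!\left(\comp[i]{\varphi^k}\right)}$ and $\hgt{K_i R} \ge i$ for all $i$. I would take $K_i = \detid{j}{X}$ in cases (i)-(iii) and $K_i = \pfaffid{2j}{X}$ in cases (iv)-(v), with $j$ chosen as the largest admissible index for which the quantitative threshold of \cref{ContainmentLemma} is met at position $i$; the height hypothesis of the proposition then gives exactly $\hgt{K_i R} \ge i$. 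Small indices $i \le \hgt{J}$ not covered by \cref{ContainmentLemma} are handled by the automatic containment $\sqrt{I\!\left(\comp[i]{\varphi^k}\right)} \supseteq \sqrt{J}$ coming from acyclicity of the resolution, so $\hgt{I} = \hgt{J} \ge i$ suffices. The main obstacle is this case-by-case matching of the thresholds of \cref{ContainmentLemma} to the height bounds in parts (i)-(v), and confirming that the two ranges of $i$ together exhaust every index appearing in $\compp{D^k}$.

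Once \cref{SpecializationLemma}(i) applies, $\compp{D^k} \otimes_B R$ is a free $R$-resolution of $J^k \otimes_B R$. Extending $\compp{D^k}$ to a resolution of $B/J^k$ by appending $B \to B/J^k \to 0$ and tensoring with $R$, a homology chase using $\hmgy{i}{\compp{D^k} \otimes_B R} = 0$ for $i \ge 1$ yields $\Tor{i}{B}{B/J^k}{R} = 0$ for all $i \ge 1$. The long exact sequence arising from $0 \to J^k \to B \to B/J^k \to 0$ then forces the natural surjection $\psi_{k} \colon J^k \otimes_B R \twoheadrightarrow I^k$ to be injective, hence an isomorphism; assembling these isomorphisms over $k$ gives $\rees{J} \otimes_B R \cong \rees{I}$ as graded $R$-algebras via the natural map.

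For the Cohen-Macaulayness claims, in cases (i) and (iv) the ring $\rees{J}$ is known to be Cohen-Macaulay unconditionally, while in cases (ii), (iii), and (v) the stated characteristic hypotheses guarantee Cohen-Macaulayness of $\rees{J}$ by known results. A dimension count yields $\dim \rees{J} - \dim \rees{I} = \gens{B}{N}$, so the generators of $N$ form a regular sequence on $\rees{J}$, and therefore $\rees{I} \cong \rees{J}/N\rees{J}$ inherits the Cohen-Macaulay property.
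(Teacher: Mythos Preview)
Your overall strategy matches the paper's, but there is a genuine gap in the step where you pass from acyclicity of $\comp{D^{k}} \otimes_{B} R$ to injectivity of $\psi_{k}$. You claim that $\hmgy{i}{\comp{D^{k}} \otimes_{B} R} = 0$ for $i \ge 1$ yields $\Tor{i}{B}{B/J^{k}}{R} = 0$ for all $i \ge 1$. This is false for $i = 1$. The extended complex $\widetilde{\comp{D}}\vphantom{D}^{k}$ obtained by appending $B$ in position $0$ satisfies $\hmgy{i}{\widetilde{\comp{D}}\vphantom{D}^{k} \otimes_{B} R} = \hmgy{i-1}{\comp{D^{k}} \otimes_{B} R}$ only for $i \ge 2$; at $i = 1$ a direct computation gives
\[
\Tor{1}{B}{B/J^{k}}{R} \;=\; \ker\!\left(J^{k}\otimes_{B} R \to R\right) \;=\; \ker \psi_{k},
\]
which is exactly the module you are trying to show is zero. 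So the ``homology chase'' is circular.

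The paper closes this gap by extracting more from the height hypotheses than you do. With the correct choice of $K_{i}$ (take $j_{0}$ to be the \emph{smallest} $j$ with $i \ge \sigma(j)$, not the largest), the hypotheses of the proposition actually give the strict inequality $\hgt{K_{i}R} \ge \sigma(j_{0}-1) > i$, i.e.\ $\hgt{K_{i}R} \ge i+1$, for every $i$ in the range. This is stronger than what \hyperref[SpecializationLemmaRes]{\cref*{SpecializationLemma}.i} needs, and the extra unit of height is precisely what makes the argument go through: one shows $\ker\psi_{k} = \tau_{R}\!\left(J^{k}\otimes_{B} R\right)$, and then invokes \hyperref[TorsionKernelLemmaFull]{\cref*{TorsionKernelLemma}.i}, which requires $\pdim{R_{\fp}}\!\left(J^{k}\otimes_{B} R\right)_{\fp} < \hgt{\fp}$ for all $\fp$ of positive height. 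The strict bound $\hgt{I\!\left(\comp[i]{\varphi^{k}}\otimes_{B} R\right)} > i$ is exactly what guarantees this projective-dimension estimate after localizing. Equivalently, the strict bound lets you apply the acyclicity criterion to the \emph{extended} resolution of $B/J^{k}$ rather than to $\comp{D^{k}}$ itself, at which point your Tor argument would become valid; but as written, with only $\hgt{K_{i}R} \ge i$, it does not.

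A minor point: there is no Cohen--Macaulayness claim in part~(iii) of the proposition, and no characteristic hypothesis there to invoke, so that case should be omitted from your final paragraph.
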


\begin{proof}
Let $N$ be the $B$-ideal generated by the entries of $X-A$, and give $R$ the $B$-algebra structure induced by the isomorphism $R \cong B/N$. Let $\psi_{k} \colon J^{k} \otimes_{B} R \to I^{k}$ be the natural surjection. Then $\bigoplus_{k=0}^{\infty} \psi_{k} \colon \rees{J} \otimes_{B} R \to \rees{I}$ is the natural surjection. Hence, to show the Rees ring specializes, it suffices to prove $\ker \psi_{k} = 0$ for all $k$.

We see $\ker \psi_{k} = \tau_{R}\!\left(J^{k} \otimes_{B} R\right)$. Indeed,
\[  \left(J^{k} \otimes_{B} R\right) \otimes_{R} \quot{R} \cong J^{k} \otimes_{B} R \otimes_{R} \frac{B_{N}}{NB_{N}} \cong  J^{k} \otimes_{B} \frac{B_{N}}{NB_{N}} \cong \frac{B_{N}}{NB_{N}} \cong \quot{R} \]
since $J$ is not contained in $N$. In addition, $I^{k} \otimes_{R} \quot{R} \cong \quot{R}$. Hence, tensoring $\psi\!_{k}$ with $\quot{R}$ is an isomorphism, giving that $\ker \psi\!_{k}$ is a torsion $R$-module. Clearly, the $R$-torsion of $J^{k} \otimes_{B} R$ is contained in $\ker \psi\!_{k}$ since $I^{k}$ is $R$-torsion-free.

Therefore, by \hyperref[TorsionKernelLemmaFull]{\cref{TorsionKernelLemmaFull}.i}, it suffices to show $\pdim{R_{\fp}}\!\left(J^{k} \otimes_{B} R\right)_{\fp} < \hgt{\fp}$ for all $\fp \in \spec{R}$ with $\hgt{\fp} > 0$. 

In order to show this, we first prove resolutions of $J^{k}$ specialize to resolutions of $J^{k} \otimes_{B} R$. Specifically, for each $k$, let $\left(\comp[\bullet]{D^{k}},\comp{\varphi^{k}}\right)$ be a finite free $B$-resolution of $J^{k}$ where each $\comp[i]{D^{k}}$ is finitely generated. We prove $\comp{D^{k}} \otimes_{B} R$ is a free $R$-resolution. By \hyperref[SpecializationLemmaRes]{\cref{SpecializationLemmaRes}.i}, it suffices to find a family $\left\{K_{i}\right\}$ of $B$-ideals with $\hgt{K_{i}R} \ge i$ for each $i$ and with $K_{i} \subseteq \sqrt{\detid{}{\comp[i]{\varphi^{k}}}}$ for each $i$ and for all $k$. We handle all cases simultaneously by establishing the following notation:

\begin{enumerate}
    \item[$(\mathrm{i})$] Let $I_{j} = \detid{j}{X}$ and $\sigma\!\left(j\right) = \left(m-j\right)\left(n-m\right)+1$.
    
    \item[$(\mathrm{ii})$] Let $I_{j} = \detid{j}{X}$ and $\sigma\!\left(j\right) = \left(m-j\right)\left(n-j\right)$.
    
    \item[$(\mathrm{iii})$] Let $I_{j} = \detid{j}{X}$ and $\sigma\!\left(j\right) = \binom{n-j+1}{2}$.
    
    \item[$(\mathrm{iv})$] Let $I_{j} = \pfaffid{2j}{X}$ and $\sigma\!\left(j\right) = n-2j$.
    
    \item[$(\mathrm{v})$] Let $I_{j} = \pfaffid{2j}{X}$ and $\sigma\!\left(j\right) = \binom{n-2j}{2}$.
\end{enumerate}

For each $1 \le i \le \hgt{I}-1$, we set $K_{i} = \sqrt{J}$.  For each $\hgt{I} \le i \le \max_{k}\!\left\{\pdim{}{J^{k}}\right\}$, we let $j_{0}$ be the smallest $j$ satisfying $i \ge \sigma\!\left(j\right)$, and set $K_{i} = \sqrt{I_{j_{0}}}$. 

We now show that for each $i$ satisfying $\hgt{I} \le i \le \max_{k}\!\left\{\pdim{}{J^{k}}\right\}$, we have $1 \le j_{0} \le t-1$, which gives us control over $\hgt{K_{i}R}$ by assumption. Since $I$ is of generic (symmetric, alternating) height, $\sigma\!\left(t-1\right) = \hgt{I} \le i$. Thus, by the definition of $j_{0}$, we have $j_{0} \le t-1$. Moreover, by \cref{CNAnalyticSpread}, we see $\sigma\!\left(0\right) =  \max_{k}\!\left\{\pdim{}{J^{k}}\right\}+1 > i$. Thus, by the definition of $j_{0}$, we have $1 \le j_{0}$.

We now proceed to demonstrate the required height and containment conditions for the family $\{K_i\}$.

Fix $i$ with $1 \le i \le \hgt{I}-1$.  Note that we can extend $\left(\comp{D^{k}},\comp{\varphi^{k}}\right)$ to a resolution of $B/J^{k}$ by taking $0 \to \comp{D^{k}} \overset{\comp[0]{\varphi^{k}}}{\to} B$.  Then, since $\ann{}{B/J^{k}} \ne 0$ and $B$ is Cohen-Macaulay, we have $\sqrt{J^{k}} = \sqrt{\detid{}{\comp[0]{\varphi^{k}}}} = \cdots = \sqrt{\detid{}{\comp[\hgt{J^{k}-1}]{\varphi^{k}}}}$. Since $I$ is of generic height, $\hgt{J^{k}} = \hgt{J} = \hgt{I}$.  In particular, we have $K_{i} = \sqrt{J} = \sqrt{J^{k}} \subseteq \sqrt{\detid{}{\comp[i]{\varphi^{k}}}}$, and $\hgt{K_{i}R} > i$. 

Now, fix $i$ with $\hgt{I} \le i \le \max_{k}\!\left\{\pdim{}{J^{k}}\right\}$, and recall $j_{0}$ satisfies $\sigma\!\left(j_{0}-1\right) > i \ge \sigma\!\left(j_{0}\right)$ and $1 \le j_{0} \le t-1$. Since $i \ge \sigma\!\left(j_{0}\right)$, by \cref{ContainmentLemma}, we have $K_{i} = \sqrt{I_{j_{0}}} \subseteq \sqrt{\detid{}{\comp[i]{\varphi^{k}}}}$.  Moreover, by assumption, $\hgt{K_{i}R} = \hgt{I_{j_{0}}}R = \hgt{\detid{j_{0}}{A}} \ge \sigma\!\left(j_{0}-1\right) > i$. 

Therefore, $\comp[\bullet]{D^{k}} \otimes_{B} R$ is a free $R$-resolution of $J^{k} \otimes_{B} R$. To finish the proof of specialization, we note that we have actually proven more than $\comp[\bullet]{D^{k}} \otimes_{B} R$ being a free $R$-resolution of $J^{k} \otimes_{B} R$.  Indeed, we only needed $\hgt{K_{i}R} \ge i$ for all $i$, yet we proved a strict inequality for all $i$. Hence, for any positive integer $i$, we get $\hgt{\detid{}{\comp[i]{\varphi^{k}}\otimes_{B} R}} > i$. Therefore, we conclude $\pdim{R_{\fp}}\!\left(J^{k} \otimes_{B} R\right)_{\fp} < \hgt{\fp}$ for all $\fp \in \spec{R}$ with $\hgt{\fp} > 0$, finishing the proof of specialization.

All that remains to prove are the statements about $\rees{I}$ being Cohen-Macaulay. By \cite[1.1]{EH}, $\rees{J} \otimes_{B} R$ is Cohen-Macaulay when $\rees{J}$ is Cohen-Macaulay. For (i), $\rees{J}$ is Cohen-Macaulay \cite[3.5]{EH}.  For (ii), $\rees{J}$ is Cohen-Macaulay if $\chr K = 0$ or $\chr K > \min\!\left\{t,m-t\right\}$ by \cite[3.7]{BC}. For (iv), $\rees{J}$ is Cohen-Macaulay by \cite[2.2]{Huneke}. For (v), $\rees{J}$ is Cohen-Macaulay if $\chr K = 0$ or $\chr K > \min\!\left\{2t,n-2t\right\}$ by \cite[3.4]{Baetica}.
\end{proof}

\bigskip

Using the characterization of $G_{\infty}$ from \cref{GsConditions}, one can show $I$ satisfies $G_{\infty}$ if and only if the conditions in (ii)-(v) are met or if $n \le m+1$ and the conditions of (i) are met. However, the conditions in (i) are less stringent than $G_{\infty}$ provided $n > m+1$. 

Based on a number of results for the generic case and combined with the above theorem, we obtain the results in the corollary below. Parts (i) and (iv) of \cref{LinFibCor} were already known from other work. In particular, from the work of Ap\'{e}ry \cite{Apery} or Gaeta  \cite{Gaeta} (for part (i)) and J. Watanabe \cite{Watanabe} (for part (iv)), it is known that $I$ is in the linkage class of a complete intersection. Thus, $I$ is strongly Cohen-Macaulay \cite[1.4]{Huneke3}. Hence, $I$ satisfies sliding depth. Any ideal which satisfies $G_{\infty}$ and sliding depth is of linear type \cite{HSV:LinearType}. To the best of our knowledge, all other parts of the below corollary are original.

\bigskip

\begin{cor}
\label{LinFibCor}
\hypertarget{LinFibCor}{}
Let $K$ be a field. Suppose $R = K\!\left[x_{1},\ldots,x_{d}\right]$ is a polynomial ring in $d$ indeterminates over $K$. 
\begin{enumerate}
   \item[$(\mathrm{i})$] Suppose $A$ is an $m \times \left(m+1\right)$ matrix with $1 \le m$, $I = \detid{m}{A}$ is of generic height, and $I$ satisfies $G_{\infty}$ $($i.e., $\hgt{\detid{j}{A}} \ge m-j+2$ for all $1 \le j \le m-1)$. Then $I$ is of linear type.
    \item[$(\mathrm{ii})$] Suppose $A$ is an $n \times n$ matrix with $2 \le n$, $I = \detid{n-1}{A}$ is of generic height, and $I$ satisfies $G_{\infty}$ $($i.e., $\hgt{\detid{j}{A}} \ge \left(n-j+1\right)^{2}$ for all $1 \le j \le n-2)$. Then $I$ is of linear type.
    \item[$(\mathrm{iii})$] Suppose $A$ is an $n \times n$ symmetric matrix with $2 \le n$, $I = \detid{n-1}{A}$ is of generic height, and $I$ satisfies $G_{\infty}$ $($i.e., $\hgt{\detid{j}{A}} \ge \binom{n-j+2}{2}$ for all $1 \le j \le n-2)$. Then $I$ is of linear type.
    \item[$(\mathrm{iv})$] Suppose $A$ is an $n \times n$ alternating matrix with $n$ odd and $3 \le n$, $I = \pfaffid{n-1}{A}$ is of generic height, and $I$ satisfies $G_{\infty}$ $($i.e., $\hgt{\pfaffid{2j}{A}} \ge n-2j+2$ for all $2 \le 2j \le n-3)$. Then $I$ is of linear type.
    \item[$(\mathrm{v})$] Suppose $\chr K \ne 2$, $A$ is an $n \times n$ alternating matrix with $n$ even and $4 \le n$, $I = \pfaffid{n-2}{A}$ is of generic height, and $I$ satisfies $G_{\infty}$ $($i.e.,  $\hgt{\pfaffid{2j}{A}} \ge \binom{n-2j+2}{2}$ for all $2 \le 2j \le n-4)$. Then $I$ is of linear type.
    \item[$(\mathrm{vi})$] Suppose $A$ is an $m \times n$ matrix with $1 \le m \le n$, the entries of $A$ are homogeneous of the same degree, $I = \detid{m}{A}$ is of generic height, and $\hgt{\detid{j}{A}} \ge \left(m-j+1\right)\left(n-m\right)+1$ for all $1 \le j \le m-1$. Then $I$ is of fiber type.
    \item[$(\mathrm{vii})$] Suppose $\chr K = 0$, $A$ is a $3 \times n$ matrix with $3 \le n$, the entries of $A$ are homogeneous of the same degree, $I = \detid{2}{A}$ is of generic height, and  $\hgt{\detid{1}{A}} \ge 3n$. Then $I$ is of fiber type.
\end{enumerate}
\end{cor}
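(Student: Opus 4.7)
The plan is to reduce all seven parts to the generic case via the specialization framework of \cref{SpecializationRees} and \cref{SpecializationLemma}. For each part, let $X$ denote the corresponding generic matrix (ordinary, symmetric, or alternating) over $R$ of the same shape as $A$, set $B = R[X]$, let $J$ be the corresponding generic determinantal or Pfaffian ideal, and specialize along the regular sequence $\{X_{ij} - A_{ij}\}$ giving $R \cong B/N$.

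First, I would check by direct arithmetic that the height hypotheses on $A$ in each part match exactly the hypotheses of the corresponding part of \cref{SpecializationRees}. For instance, in (i) with $n=m+1$ the expression $(m-j+1)(n-m)+1$ simplifies to $m-j+2$; in (ii) with $m=n$ and $t=n-1$ we get $(m-j+1)(n-j+1)=(n-j+1)^2$; analogous substitutions handle (iii)--(vii). Consequently \cref{SpecializationRees} applies, so $\rees{J} \otimes_B R \cong \rees{I}$ via the natural map. In particular, each $\psi_k \colon J^k \otimes_B R \to I^k$ is an isomorphism, so $\ker \psi_k = 0$ for all $k$.

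Plugging $\ker \psi_k = 0$ into the exact sequence of \hyperref[SpecializationLemmaSeq]{\cref*{SpecializationLemma}.iii} yields a homogeneous epimorphism $\A{k}{J} \otimes_B R \twoheadrightarrow \A{k}{I}$ for every $k$. This reduces the corollary to understanding the generic analog: if $\A{k}{J} = 0$ for all $k$, then $\A{k}{I} = 0$ as a quotient, yielding linear type; and if each $\A{k}{J}$ is generated in degree zero (i.e., in bidegree $(0,k)$), then so is its epimorphic image $\A{k}{I}$, yielding fiber type.

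Finally, I would invoke the known facts about the generic case. For (i) and (iv), $J$ lies in the linkage class of a complete intersection by Ap\'ery--Gaeta \cite{Apery, Gaeta} and Watanabe \cite{Watanabe}, hence is strongly Cohen--Macaulay \cite[1.4]{Huneke3}, satisfies sliding depth, and combined with the generic $G_\infty$ from \cref{GsGeneric} and \cref{GsAlternating} is of linear type by \cite{HSV:LinearType}. For the novel parts (ii), (iii), and (v), the corresponding generic ideals are again known to be strongly Cohen--Macaulay (via generic perfection and linkage arguments, with $\chr K \ne 2$ used in case (v)), and \cref{GsGeneric}, \cref{GsSymmetric}, \cref{GsAlternating} supply $G_\infty$ generically, so the same \cite{HSV:LinearType} argument applies. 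For (vi) and (vii), the generic fiber type statements are precisely \cref{TheoremMaximalMinors} and \cref{FiberTypeSize2} of this paper. The main obstacle is step three for the original parts (ii), (iii), (v): assembling the generic strongly Cohen--Macaulay (or sliding depth) statements for those specific ideals; once the generic case is secured, the specialization step is formal.
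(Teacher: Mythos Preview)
Your specialization framework is exactly the paper's: apply \cref{SpecializationRees} to get $\rees{J}\otimes_B R\cong\rees{I}$, feed $\ker\psi_k=0$ into \hyperref[SpecializationLemmaSeq]{\cref*{SpecializationLemma}.iii}, and conclude that $\A{k}{J}\otimes_B R\twoheadrightarrow\A{k}{I}$, so linear type and fiber type descend from the generic case. That part is correct and matches the paper.

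The gap is entirely in your final step, and you have partly misdiagnosed it. For parts (ii), (iii), (v) you propose to assemble ``strongly Cohen--Macaulay via generic perfection and linkage'' and then invoke \cite{HSV:LinearType}. This is not how the generic linear type is established in the literature, and there is no readily available linkage argument placing $\detid{n-1}{X}$ for generic square $X$, $\detid{n-1}{X}$ for generic symmetric $X$, or $\pfaffid{n-2}{X}$ for generic alternating $X$ in the linkage class of a complete intersection. The paper instead cites direct proofs of linear type for each generic ideal: \cite[2.6]{Huneke} for (ii), \cite[2.10]{Kotzev} for (iii), and \cite[2.1]{Baetica2} for (v) (this is where $\chr K\ne 2$ enters). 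Similarly, for (i) and (iv) the paper cites \cite[Proposition 1.1]{HunekeDSequence} and \cite[2.2]{Huneke3} directly rather than going through linkage. Your sliding-depth route works for (i) and (iv), but for the others you should simply cite the available linear-type theorems.

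For (vi) and (vii) your citations are circular: \cref{TheoremMaximalMinors} and \cref{FiberTypeSize2} are downstream results whose proofs themselves rely on the generic fiber-type statements (\cref{GlobalConditionsOrdinary}(d),(f)), which in turn are \cite[2.5]{EH} and \cite[7.3]{HPPRS}. You should cite those external sources directly, as the paper does.
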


\begin{proof}
By \cref{SpecializationRees}, we have $\rees{J} \otimes_{B} R \cong \rees{I}$ via the natural map for all cases. In particular, by \hyperref[SpecializationLemmaSeq]{\cref{SpecializationLemmaSeq}.iii}, the sequence
\[ \A{k}{J} \otimes_{B} R \to \A{k}{I}  \to 0 \]
is exact and homogeneous for each $k$.

If $J$ is of linear type, then $\A{k}{J} = 0$ for all $k$. Thus, $\A{k}{I} = 0$ for all $k$. Hence $I$ is of linear type.

By the following references, we know $J$ is of linear type. For (i), \cite[Proposition 1.1]{HunekeDSequence}. For (ii), \cite[2.6]{Huneke}. For (iii), \cite[2.10]{Kotzev}. For (iv), \cite[2.2]{Huneke3}. For (v),  when $2$ is invertible,  \cite[2.1]{Baetica2}.

To prove the fiber type results, we note that being generated in degree $0$ is preserved under homogeneous surjections. Therefore, $I$ is of fiber type if $J$ is of fiber type.

For (vi), it is known that $J$ is of fiber type by \cite[2.5]{EH}. For (vii), when $\chr K = 0$, it is known that $J$ is of fiber type from \cite[7.3]{HPPRS}.
\end{proof}

\bigskip

\section{Degree Bounds}
\label{SectionDegreeBounds}
\hypertarget{SectionDegreeBounds}{}

\numberwithin{thm}{subsection}

\subsection{Tools}
\label{DegBoundTools}
\hypertarget{DegBoundTools}{}

In this section, we are concerned with bounding the generation and concentration degrees of $\A{k}{I}$ for a determinantal or Pfaffian ideal $I$ of generic (symmetric, alternating) height. Our main tool is the homogeneous exact sequence of \hyperref[SpecializationLemmaSeq]{\cref{SpecializationLemmaSeq}.iii} and the results of \cite[5.4, 5.6]{KPU} which relate degree shifts in approximate resolutions to degree bounds. In particular, we use \cite[5.4, 5.6]{KPU} to obtain degree bounds of $\ker \psi\!_{k}$ using the approximate resolution in \hyperref[SpecializationLemmaApproxRes]{\cref{SpecializationLemmaApproxRes}.ii}.

We introduce the following data which will be referenced throughout this section.

\bigskip

\begin{data}
\label{GradedData}
\hypertarget{GradedData}{Let} $R$ be a standard graded polynomial ring in $d$ indeterminates over the field $K$, and suppose $1 \le t \le m \le n$.
\begin{enumerate}
   \item[$(\mathrm{a})$]\label{GradedDataOrd}
\hypertarget{GradedDataOrd}{} Let $A$ be an $m \times n$ matrix over $R$ with all entries homogeneous of the same degree $\delta$ and $I = \detid{t}{A}$ be of generic height. Let $X$ be a generic $m \hspace{0.5 mm}\times \hspace{0.5 mm} n$ matrix over $R$, $B=R\!\left[X\right]$ where $\deg X_{ij} = 1$ for all $X_{ij}$, and $J=\detid{t}{X}$. Further, for each $k$, let $\left(\comp[\bullet]{D^{k}},\comp{\varphi^{k}}\right)$ be the minimal homogeneous free $B$-resolution of $J^{k}\!\left(tk\right)$.
    \item[$(\mathrm{b})$]\label{GradedDataSym}
\hypertarget{GradedDataSym}{} Let $A$ be an $n \times n$ symmetric matrix over $R$ with all entries homogeneous of the same degree \\$\delta$ and $I = \detid{t}{A}$ be of generic symmetric height. Let $X$ be a generic symmetric $n \times n$ matrix over $R$, $B=R\!\left[X\right]$  where $\deg X_{ij} = 1$ for all $X_{ij}$, and $J=\detid{t}{X}$. Further, for each $k$, let $\left(\comp[\bullet]{D^{k}},\comp{\varphi^{k}}\right)$ be the minimal homogeneous free $B$-resolution of $J^{k}\!\left(tk\right)$.
    \item[$(\mathrm{c})$]\label{GradedDataAlt}
\hypertarget{GradedDataAlt}{} Let $A$ be an $n \times n$ alternating matrix over $R$ with all entries homogeneous of the same degree $\delta$ and $I = \pfaffid{2t}{A}$ be of generic alternating height. Let $X$ be a generic alternating $n \times n$ matrix over $R$, $B=R\!\left[X\right]$  where $\deg X_{ij} = 1$ for all $X_{ij}$, and $J=\pfaffid{2t}{X}$. Further, for each $k$, let $\,\left(\comp[\bullet]{D^{k}},\comp{\varphi^{k}}\right)$ be the minimal homogeneous free $B$-resolution of $J^{k}\!\left(tk\right)$.
\end{enumerate}
\end{data}

\bigskip

The following result is the main argument used to obtain degree bounds on $\mathcal{A}\!\left(I\right)$ from degree bounds on $\mathcal{A}\!\left(J\right)$ and from free resolutions of $J^{k}$.  

\bigskip

\begin{prop}
\label{Bounds}
\hypertarget{Bounds}
Suppose $R$ is a standard graded polynomial ring in $d > 0$ indeterminates over the field $K$, and let $A_{1},\ldots,A_{D}$ be a sequence in $R$ where each $A_{i}$ is homogeneous of the same degree $\,\delta$. Suppose $B = R\!\left[X_{1},\ldots,X_{D}\right]$ where $\deg X_{i} = \delta$ for each $i$. Let $J$ be a homogeneous $B$\!-ideal generated by forms of the same degree $q$. Define $Y_{i} = X_{i}-A_{i}$, assume $Y_{1}, \ldots, Y_{D}$ is regular on $B$ and $B/J$, and let $N = \left(Y_{1},\ldots,Y_{D}\right)$. We give $R$ the $B$-algebra structure induced by the homogeneous isomorphism $R \cong B/N$. Let $I = JR$.  For each $k$, let $\,\left(\comp{E^{k}},\comp{\tau^{k}}\right)$ be a homogeneous finite free $B$-resolution of $\,J^{k}\!\left(kq\right)$ where each $\comp[i]{E^{k}}$ is finitely generated. 
Suppose $\,\left\{K_{i}\right\}$ is a family of $B$-ideals so that  $K_{i} \subseteq \sqrt{I\left(\comp[i]{\varphi^{k}}\right)}$ for all $i$ and for all $k$. 
If $\;\hgt{K_{i}R} \ge \min\left\{i+1,d\right\}\,$ for all $i$, then 
    \[ \bzero{\A{k}{I}} \le \max\left\{ \bzero{\A{k}{J}}, \bzero{\comp[d-1]{E^{k}}}-d+1 \right\}, \mbox{ and }\]
    \[ \tdp{\A{k}{I}} \le \max\left\{ \tdp{\A{k}{J}}, \bzero{\comp[d]{E^{k}}}-d \right\}.\]
\end{prop}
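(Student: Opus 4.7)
The plan is to combine the homogeneous exact sequence of \hyperref[SpecializationLemmaSeq]{\cref*{SpecializationLemma}.iii} with degree bounds on local cohomology supplied by \cite[5.4, 5.6]{KPU}, applied to the approximate resolution produced in \hyperref[SpecializationLemmaApproxRes]{\cref*{SpecializationLemma}.ii}. First I would observe that the hypothesis $\hgt{K_{i}R}\ge \min\{i+1,d\}$ is strictly stronger than the condition required for an approximate resolution, so for each $k$ the complex $\compp{E^k}\otimes_B R$ is an approximate $R$-resolution of $J^k(kq)\otimes_B R$. The exact sequence
\[
\A{k}{J}\otimes_B R \longrightarrow \A{k}{I} \longrightarrow (\ker\psi_{k})(kq) \longrightarrow 0,
\]
combined with the elementary fact that $\A{k}{J}\otimes_B R$ is a graded quotient of $\A{k}{J}$ (so its generation and concentration degrees are at most those of $\A{k}{J}$), reduces the problem to bounding $\bzero{(\ker\psi_{k})(kq)}$ and $\tdp{(\ker\psi_{k})(kq)}$.

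Next I would identify $(\ker\psi_{k})(kq)$ with the zeroth local cohomology module $\locoh{0}{\fm}{J^k(kq)\otimes_B R}$ via \hyperref[TorsionKernelLemmaPunctured]{\cref*{TorsionKernelLemma}.ii}. Two ingredients are needed. The equality $\ker\psi_{k} = \tau_R(J^k\otimes_B R)$ follows by repeating the argument used in the proof of \cref{SpecializationRees}: the ideal $I^k$ is $R$-torsion-free, and under the nontrivial assumption $I\ne 0$, tensoring $\psi_{k}$ with $\quot R$ gives an isomorphism. For the punctured pdim condition $\pdim{R_{\fp}}{(J^k\otimes_B R)_{\fp}}<\hgt{\fp}$ at every $\fp\in\proj{R}$ with $\hgt{\fp}>0$, I would use that $R$ is a Cohen-Macaulay polynomial ring to conclude $\hgt{K_{i}R_{\fp}} = \hgt{K_{i}R}$ whenever $K_{i}R\subseteq \fp$; the hypothesis then yields the strict inequality $\hgt{K_{i}R_{\fp}}\ge i+1 > i$ for all relevant $i$, and the same argument used in the proof of \cref{SpecializationRees} converts this into the required local pdim bound.

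With the identification $(\ker\psi_{k})(kq)\cong \locoh{0}{\fm}{J^k(kq)\otimes_B R}$ in hand, I would apply \cite[5.4, 5.6]{KPU} to the approximate resolution $\compp{E^k}\otimes_B R$ of $J^k(kq)\otimes_B R$ to extract the bounds
\[
\bzero{\locoh{0}{\fm}{J^k(kq)\otimes_B R}}\le \bzero{\compp[d-1]{E^k}}-d+1 \quad\text{and}\quad \tdp{\locoh{0}{\fm}{J^k(kq)\otimes_B R}}\le \bzero{\compp[d]{E^k}}-d,
\]
and substituting into the reduction from the first paragraph produces the stated inequalities.

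The hardest step will be the verification of the punctured pdim condition, i.e., passing from the global strict height hypothesis on $K_{i}R$ to the local pdim bound for $(J^k\otimes_B R)_{\fp}$ at each nonmaximal prime of positive height. This is where the Cohen-Macaulayness of $R$ and the strict inequality built into $\min\{i+1,d\}$ (rather than $\min\{i,d-1\}$) genuinely enter; once this is secured, the rest of the argument is a mechanical assembly of lemmas already proved in the excerpt.
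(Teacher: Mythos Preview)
Your proposal is correct and follows essentially the same route as the paper's proof: reduce via the exact sequence of \cref{SpecializationLemma}.iii, identify $(\ker\psi_k)(kq)$ with $\locoh{0}{\fm}{J^k(kq)\otimes_B R}$ using \cref{TorsionKernelLemma}.ii and the torsion argument from the proof of \cref{SpecializationRees}, and then apply \cite[5.4, 5.6]{KPU} to the approximate resolution from \cref{SpecializationLemma}.ii. One minor correction: the claim $\hgt{K_iR_\fp}=\hgt{K_iR}$ need not hold as an equality, but the inequality $\hgt{K_iR_\fp}\ge\hgt{K_iR}$ (valid in any Noetherian ring when $K_iR\subseteq\fp$) is all you use, and the paper in fact bypasses this by passing directly to $I(\tau^k_i\otimes_B R)$ before localizing.
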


\begin{proof}
Let $\psi\!_{k} \colon J^{k} \otimes_{B} R \to I^{k}$ be the natural surjection. 

By \hyperref[SpecializationLemmaSeq]{\cref{SpecializationLemmaSeq}.iii}, the homogeneous sequence of $R$-modules 
\[ \A{k}{J} \otimes_{B} R \to \A{k}{I} \to \left(\ker \psi\!_{k}\right)\!\left(tk\delta\right) \to 0 \]
is exact. Therefore, we have
\[ \bzero{\A{k}{I}} \le \max\left\{ \bzero{\A{k}{J}}, \bzero{\left(\ker \psi\!_{k}\right)\left(tk\delta\right)} \right\}, \mbox{ and }\]
\[ \tdp{\A{k}{I}} \le \max\left\{ \tdp{\A{k}{J}}, \tdp{\left(\ker \psi\!_{k}\right)\left(tk\delta\right)} \right\}.\]

It remains to show  $\bzero{\left(\ker \psi\!_{k}\right)\left(tk\delta\right)} \le \bzero{\comp[d-1]{E^{k}}}-d+1$ and $\tdp{\left(\ker \psi\!_{k}\right)\left(tk\delta\right)} \le \bzero{\comp[d]{E^{k}}}-d$.

We begin by showing $\ker \psi\!_{k} = \locoh{0}{\mf{m}}{J^{k} \otimes_{B} R}$ for each $k$. As in the proof of \cref{SpecializationRees}, $\ker \psi\!_{k}$ is the $R$-torsion of $J^{k} \otimes_{B} R$. By \cref{TorsionKernelLemma}, we have \[\ker \psi\!_{k} = \locoh{0}{\fm}{J^{k} \otimes_{B} R} \,\text{ if and only if }\; \pdim{R_{\fp}}{\left(J^{k} \otimes_{B} R \right)_{\fp}} < \hgt{\fp}\] for all primes $\fp \in \proj{R}$ with $\hgt{\fp} > 0$.
    
By our assumption that $\hgt{K_{i}R} \ge \min\left\{i+1,d\right\}$ for all $i$ and since $\hgt{K_{i}R} \le \hgt{I\left(\comp[i]{\tau^{k}} \otimes_{B} R\right)}$ for all $i$, we get $\hgt{I\left(\comp[i]{\tau^{k}} \otimes_{B} R\right)}\ge \min\left\{i+1,d\right\}$ for all $i$. Now, let $\fp \in \proj{R}$ with $\hgt{\fp} > 0$. Then \[\hgt{\left(I\left(\comp[\hgt{\fp}]{\tau^{k}} \otimes_{B} R\right)_{\fp}\right)} \ge \min\left\{\hgt{\fp}+1,d\right\} >  \dim R_{\fp}.\] Hence, $\pdim{R_{\fp}}{\left(J^{k} \otimes_{B} R \right)_{\fp}} < \hgt{\fp}$ for all $k$. Therefore, $\ker \psi\!_{k} = \locoh{0}{\mf{m}}{J^{k} \otimes_{B} R}$ for each $k$.
    
Since $\ker \psi\!_{k} = \locoh{0}{\mf{m}}{J^{k} \otimes_{B} R}$ for each $k$ and since $\comp{E^{k}} \! \otimes_{B} \! R$ is a homogeneous approximate resolution of $J^{k} \otimes_{B} R$ (see \hyperref[SpecializationLemmaApproxRes]{\cref{SpecializationLemmaApproxRes}.ii}), by \cite[5.4, 5.6]{KPU}, we have 
\[\bzero{\left(\ker \psi\!_{k}\right)\left(tk\delta\right)}  \le \bzero{\comp[d-1]{E^{k}} \otimes_{B} R}-d+1 \le \bzero{\comp[d-1]{E^{k}}}-d+1.\]
Likewise, we have
\[\tdp{\left(\ker \psi\!_{k}\right)\left(tk\delta\right)} \le \bzero{\comp[d]{E^{k}} \otimes_{B} R}-d \le \bzero{\comp[d]{E^{k}}}-d.\]
\end{proof}

\bigskip

We wish to apply \cref{Bounds} to the setting of \cref{GradedData}, where the indeterminates $X_{i}$ are the algebraically independent entries of the generic (symmetric, alternating) matrix $X$. However, the hypotheses in \cref{Bounds} state $\deg X_{ij} = \delta$, whereas \cref{GradedData} gives $\deg X_{ij} = 1$, which is the grading one typically uses to calculate quantities involving the ideals $J^{k}$. Since the generators of $J^{k}$ can be expressed entirely in the entries of the matrix $X$, transitioning from the grading in \cref{Bounds} to the grading in \cref{GradedData} corresponds to multiplying the degrees of homogeneous polynomials in the entries of $X$ by $\delta$. Therefore, we obtain the following corollary.

\bigskip

\begin{cor}
\label{BoundsRemark}
\hypertarget{BoundsRemark}
Adopt \hyperref[GradedDataOrd]{\cref{GradedDataOrd}.a}, \hyperref[GradedDataSym]{\cref{GradedDataSym}.b}, or \hyperref[GradedDataAlt]{\cref{GradedDataAlt}.c}, and suppose $\,\left\{K_{i}\right\}$ is a family of $B$-ideals so that  $K_{i} \subseteq \sqrt{I\left(\comp[i]{\varphi^{k}}\right)}$ for all $i$ and for all $k$.  If $\;\hgt{K_{i}R} \ge \min\left\{i+1,d\right\}\,$ for all $i$, then, for each $k$, one has
\[ \bzero{\A{k}{I}} \le \max\left\{ \delta \bzero{\A{k}{J}}, \delta \bzero{\comp[d-1]{D^{k}}}-d+1 \right\}, \mbox{ and }\]
    \[ \tdp{\A{k}{I}} \le \max\left\{ \delta \tdp{\A{k}{J}}, \delta \bzero{\comp[d]{D^{k}}}-d \right\}.\]
\end{cor}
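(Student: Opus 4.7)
The plan is to apply \cref{Bounds} after a suitable regrading of $B$. Observe that \cref{Bounds} takes $\deg X_{i} = \delta$ so that the sequence $Y_{i} = X_{i}-A_{i}$ is homogeneous and the specialization $B \to R$ is a homogeneous epimorphism, whereas \cref{GradedData} uses the natural grading $\deg X_{ij} = 1$ under which the resolutions $\comp{D^{k}}$ and the invariants $\bzero{\A{k}{J}}$ and $\tdp{\A{k}{J}}$ are computed. The whole argument amounts to bookkeeping a factor of $\delta$ when passing between the two gradings.

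First, I would verify the hypotheses of \cref{Bounds} in the rescaled grading where $\deg X_{ij} = \delta$. Each $Y_{ij} = X_{ij}-A_{ij}$ is homogeneous of degree $\delta$, and $(Y_{ij})$ is a regular sequence on $B$ since $B/(Y_{ij}) \cong R$. It is also a regular sequence on $B/J$: the rings $B/J$ in all three cases are Cohen-Macaulay (by Hochster--Eagon, Kutz, and J\'ozefiak--Pragacz respectively), and the generic-height hypothesis on $I$ gives $\dim B/(J,Y_{ij}) = \dim R/I = d-\hgt I = d-\hgt J$, forcing $\hgt((J,Y_{ij})/J) = D$. The generators of $J$ are forms of degree $q := t\delta$ in the rescaled grading, matching the degree of the generators of $I$ in $R$. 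The containment and height conditions on $\{K_{i}\}$ only involve heights in $R$ and are therefore unaffected by the regrading of $B$.

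Next, I would compare invariants between the two gradings. Rescaling $\deg X_{ij}$ from $1$ to $\delta$ multiplies every degree of any graded $B$-module generated by polynomial expressions in the $X_{ij}$'s by $\delta$. Consequently, without altering the underlying maps, $\comp{D^{k}}$ becomes a homogeneous minimal free resolution $\comp{E^{k}}$ of $J^{k}(tk\delta)$ with $\bzero{\comp[i]{E^{k}}} = \delta\cdot\bzero{\comp[i]{D^{k}}}$ for every $i$, and similarly $\bzero{\A{k}{J}}$ and $\tdp{\A{k}{J}}$ in the rescaled grading equal $\delta$ times their values in the natural grading. Since $R$ is not regraded, the quantities $\bzero{\A{k}{I}}$ and $\tdp{\A{k}{I}}$ on the left-hand side are the same in both contexts. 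Applying \cref{Bounds} in the rescaled grading and substituting these rescaling identities yields exactly the stated inequalities. The main obstacle, such as it is, is checking that $(Y_{ij})$ forms a regular sequence on $B/J$; this is where the generic-height hypothesis on $I$ plays its essential role, and is the only nontrivial input beyond what is already packaged in \cref{Bounds}.
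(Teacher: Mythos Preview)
Your proposal is correct and takes essentially the same approach as the paper: apply \cref{Bounds} after regrading $B$ so that $\deg X_{ij} = \delta$, then observe that all the $B$-side invariants ($\bzero{\A{k}{J}}$, $\tdp{\A{k}{J}}$, $\bzero{\comp[i]{D^{k}}}$) scale by $\delta$ while the $R$-side invariants are unchanged. You supply more detail than the paper does on why the $Y_{ij}$ form a regular sequence on $B/J$, but this verification is implicit in the paper (it is used without comment in \cref{SpecializationLemma} and checked in passing in the proof of \cref{MinGens}).
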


\bigskip

The above result is fairly general, and the next remark shows how one can apply it to determinantal and Pfaffian ideals of generic height.

\bigskip

\begin{cor}[]
\label{DegBoundRemark}
\hypertarget{DegBoundRemark}{}
Suppose one of the following sets of hypotheses is satisfied.

\begin{enumerate}
    \item[$(\mathrm{i})$] Adopt \hyperref[GradedDataOrd]{\cref{GradedDataOrd}.a}, and let $\,t = m$. Suppose \[\hgt{\detid{j}{A}} \ge \min\left\{\left(m-j+1\right)\left(n-m\right)+1,d\right\}\] for all $\;1 \le j \le m-1$.
    
    \item[$(\mathrm{ii})$] Adopt \hyperref[GradedDataOrd]{\cref{GradedDataOrd}.a}, and let $\,1 \le t < m$. Suppose \[\hgt{\detid{j}{A}} \ge \min\left\{\left(m-j+1\right)\left(n-j+1\right),d\right\}\] for all $\;1 \le j \le m-1$.
    
    \item[$(\mathrm{iii})$] Adopt \hyperref[GradedDataSym]{\cref{GradedDataSym}.b}. Suppose \[\hgt{I_{j}\!\left(A\right)} \ge \min\left\{\binom{n-j+2}{2},d\right\}\] for all $\;1 \le j \le t-1$.
    
    \item[$(\mathrm{iv})$] Adopt \hyperref[GradedDataAlt]{\cref{GradedDataAlt}.c}, and let $\;2t = n-1$. Suppose  \[\hgt{\pfaffid{2j}{A}} \ge \min\left\{n-2j+2,d\right\}\] for all $\;1 \le j \le t-1$.
    
    \item[$(\mathrm{v})$] Adopt \hyperref[GradedDataAlt]{\cref{GradedDataAlt}.c}, and let $\;2 \le 2t < n-1$. Suppose  \[\hgt{\pfaffid{2j}{A}} \ge \min\left\{\binom{n-2j+2}{2},d\right\}\] for all $\;1 \le j \le t-1$.
\end{enumerate}
Then, for each $k$, one has
\[ \bzero{\A{k}{I}} \le \max\left\{ \delta \bzero{\A{k}{J}}, \delta \bzero{\comp[d-1]{D^{k}}}-d+1 \right\}, \mbox{ and }\]
    \[ \tdp{\A{k}{I}} \le \max\left\{ \delta \tdp{\A{k}{J}}, \delta \bzero{\comp[d]{D^{k}}}-d \right\}.\]
\end{cor}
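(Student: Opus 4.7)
The plan is to reduce each of the five cases to a direct application of \cref{BoundsRemark}. Thus for each case we must exhibit a family $\{K_i\}$ of $B$-ideals such that $K_i \subseteq \sqrt{I(\comp[i]{\varphi^k})}$ for all $i$ and all $k$, and such that $\hgt{K_i R} \ge \min\{i+1, d\}$. The construction will mimic the one carried out in the proof of \cref{SpecializationRees}, which in fact already produced a strict inequality $\hgt{K_i R} > i$; the only new ingredient is tracking the cutoff imposed by the dimension $d$ of $R$.

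Handling all five cases uniformly, define $I_j$ and $\sigma(j)$ exactly as in the proof of \cref{SpecializationRees}: in cases (i)--(iii), $I_j = \detid{j}{X}$ with $\sigma(j)$ equal to $(m-j)(n-m)+1$, $(m-j)(n-j)$, or $\binom{n-j+1}{2}$ respectively; in cases (iv)--(v), $I_j = \pfaffid{2j}{X}$ with $\sigma(j) = n-2j$ or $\binom{n-2j}{2}$. For $1 \le i \le \hgt{I}-1$ set $K_i = \sqrt{J}$; for $\hgt{I} \le i \le \max_k\{\pdim{B}{J^k}\}$, let $j_0$ be the smallest index with $i \ge \sigma(j_0)$ and put $K_i = \sqrt{I_{j_0}}$; for $i > \max_k\{\pdim{B}{J^k}\}$, take $K_i = B$ (the containment is trivial and the height is $\infty$).

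The containments $K_i \subseteq \sqrt{I(\comp[i]{\varphi^k})}$ follow in the first range from the Buchsbaum--Eisenbud criterion applied to the resolution $0 \to \comp{D^k} \to B \to B/J^k \to 0$ (so that $\sqrt{J^k} = \sqrt{I(\comp[i]{\varphi^k})}$ whenever $i < \hgt{J^k} = \hgt{I}$), and in the middle range directly from \cref{ContainmentLemma} applied with $j_0$ in place of $j$. For the height bounds we argue exactly as in \cref{SpecializationRees}: in the low range $\hgt{K_i R} = \hgt{I} \ge i+1$ since $I$ is of generic (symmetric, alternating) height and $i \le \hgt{I}-1$, so the min is achieved at $i+1 \le d$. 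In the middle range, minimality of $j_0$ gives $\sigma(j_0-1) > i$, and the hypothesis of the case being considered asserts precisely that $\hgt{\detid{j_0}{A}} \ge \min\{\sigma(j_0-1), d\}$ (respectively for the Pfaffian cases), whence $\hgt{K_i R} \ge \min\{\sigma(j_0-1), d\} \ge \min\{i+1, d\}$.

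The only subtlety worth flagging is verifying that $1 \le j_0 \le t-1$ so that the height hypothesis actually applies: the upper bound follows because $\sigma(t-1) = \hgt{I} \le i$, and the lower bound follows from \cref{CNAnalyticSpread}, which guarantees $\sigma(0) > \max_k\{\pdim{B}{J^k}\} \ge i$. With this verified, $\{K_i\}$ satisfies the hypotheses of \cref{BoundsRemark}, and the stated bounds on $\bzero{\A{k}{I}}$ and $\tdp{\A{k}{I}}$ follow immediately. I expect the main bookkeeping step to be the careful case-by-case matching of the $\sigma(j)$ appearing in the hypothesis of each part with the $\sigma$ used in \cref{ContainmentLemma}, but this is routine given the uniformity built into the earlier lemmas.
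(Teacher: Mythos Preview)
Your proposal is correct and follows essentially the same approach as the paper: reduce to \cref{BoundsRemark} by constructing the family $\{K_i\}$ via the same case-split and the same $\sigma(j)$ as in the proof of \cref{SpecializationRees}, then verify the containment via \cref{ContainmentLemma} and the height bound $\hgt{K_i R} \ge \min\{i+1,d\}$ using the hypothesis on $\hgt{\detid{j_0}{A}}$ (respectively $\hgt{\pfaffid{2j_0}{A}}$) together with $\sigma(j_0-1) > i$. The only cosmetic difference is that you explicitly set $K_i = B$ for $i$ beyond the maximal projective dimension, whereas the paper leaves this implicit.
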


\begin{proof}
By \cref{BoundsRemark}, it suffices to find a family $\left\{K_{i}\right\}$ of $B$-ideals so that  $K_{i} \subseteq \sqrt{I\left(\comp[i]{\varphi^{k}}\right)}$ for all $i$ and for all $k$ and so that $\;\hgt{K_{i}R} \ge \min\left\{i+1,d\right\}\,$ for all $i$.  To handle all cases simultaneously, we establish the following notation.
\begin{enumerate}
    \item[$(\mathrm{i})$] Let $I_{j} = \detid{j}{X}$ and $\sigma\!\left(j\right) = \left(m-j\right)\left(n-m\right)+1$.
    
    \item[$(\mathrm{ii})$] Let $I_{j} = \detid{j}{X}$ and $\sigma\!\left(j\right) = \left(m-j\right)\left(n-j\right)$.
    
    \item[$(\mathrm{iii})$] Let $I_{j} = \detid{j}{X}$ and $\sigma\!\left(j\right) = \binom{n-j+1}{2}$.
    
    \item[$(\mathrm{iv})$] Let $I_{j} = \pfaffid{2j}{X}$ and $\sigma\!\left(j\right) = n-2j$.
    
    \item[$(\mathrm{v})$] Let $I_{j} = \pfaffid{2j}{X}$ and $\sigma\!\left(j\right) = \binom{n-2j}{2}$.
\end{enumerate}

For each $1 \le i \le \hgt{I}-1$, we set $K_{i} = \sqrt{J}$.  For each $\hgt{I} \le i \le \max_{k}\!\left\{\pdim{}{J^{k}}\right\}$, we let $j_{0}$ be the smallest $j$ satisfying $i \ge \sigma\!\left(j\right)$, and set $K_{i} = \sqrt{I_{j_{0}}}$. 

We now show that for each $i$ satisfying $\hgt{I} \le i \le \max_{k}\left\{ \pdim{}{J^k}\right\}$ we have $1 \le j_{0} \le t-1$, which gives us control over $\hgt{K_{i}R}$ by assumption. Since $I$ is of generic (symmetric, alternating) height, $\sigma\!\left(t-1\right) = \hgt{I} \le i$. Thus, by the definition of $j_{0}$, we have $j_{0} \le t-1$. By \cref{CNAnalyticSpread}, we see $\sigma\!\left(0\right) =  \max_{k}\!\left\{\pdim{}{J^{k}}\right\}+1 > i$. Thus, by the definition of $j_{0}$, we have $1 \le j_{0}$.

Fix $i$ with $1 \le i \le \hgt{I}-1$.  Note that we can extend $\left(\comp{D^{k}},\comp{\varphi^{k}}\right)$ to a resolution of $B/J^{k}$ by taking $0 \to \comp{D^{k}} \overset{\comp[0]{\varphi^{k}}}{\to} B$.  Then, since $\ann{}{B/J^{k}} \ne 0$ and $B$ is Cohen-Macaulay, we have $\sqrt{J^{k}} = \sqrt{\detid{}{\comp[0]{\varphi^{k}}}} = \cdots = \sqrt{\detid{}{\comp[\hgt{J^{k}-1}]{\varphi^{k}}}}$. Since $I$ is of generic height, $\hgt{J^{k}} = \hgt{J} = \hgt{I}$.  In particular, we have $K_{i} = \sqrt{J} = \sqrt{J^{k}} \subseteq \sqrt{\detid{}{\comp[i]{\varphi^{k}}}}$, and $\hgt{K_{i}R} > i$. Hence, $\hgt{K_{i}R} \ge \min\!\left\{i+1,d \right\}$.

Now, fix $i$ with $\hgt{I} \le i \le \max_{k}\!\left\{\pdim{}{J^{k}}\right\}$, and recall $j_{0}$ satisfies $\sigma\!\left(j_{0}-1\right) > i \ge \sigma\!\left(j_{0}\right)$ and $1 \le j_{0} \le t-1$. Since $i \ge \sigma\!\left(j_{0}\right)$, by \cref{ContainmentLemma}, we have $K_{i} = \sqrt{I_{j_{0}}} \subseteq \sqrt{\detid{}{\comp[i]{\varphi^{k}}}}$.  Moreover, by assumption, $\hgt{K_{i}R} = \hgt{I_{j_{0}}}R = \hgt{\detid{j_{0}}{A}} \ge \min\!\left\{\sigma\!\left(j_{0}-1\right),d\right\} \ge \min\!\left\{i+1,d\right\}$.
\end{proof}

\bigskip

In much of the literature for defining equations of Rees rings, one requires the condition $G_{d}$ in order to obtain results about degree bounds. We note that in the above corollary, $G_{d}$ is satisfied in part (i) when $n = m+1$, in part (ii) when $n =m$ and $t = n-1$, in part (iii) when $t = n-1$, in part (iv), and in part (v) when $2t = n-2$. However, in each other case, the height bounds required are strictly weaker conditions than $G_{d}$. This is useful because it is not always possible to satisfy $G_{d}$ in the generic setting.

\subsection{Degree Bounds for Ordinary Matrices}
\label{OrdSection}
\hypertarget{OrdSection}{}

In this subsection, we develop bounds on $\bzero{\A{k}{I}}$ and $\tdp{\A{k}{I}}$ when $I$ is the ideal of minors of an ordinary matrix. To use the tools we have previously developed, we must utilize bounds on $\bzero{\A{k}{J}}$ and $\tdp{\A{k}{J}}$, where $J$ is the corresponding determinantal ideal of a generic ordinary matrix.  \Cref{GlobalConditionsOrdinary} provides known results in this regard.

\bigskip

\begin{prop}
\label[proposition]{GlobalConditionsOrdinary}
\hypertarget{GlobalConditionsOrdinary}{}
Let, $t$, $m$, and $n$ be integers satisfying $1 \le t \le m \le n$, $K$ be a field, $X$ be an $m \times n$ generic ordinary matrix over $K$, $S = \poly{K}{X}$, and $J = \detid{t}{X}$.
\begin{enumerate}
    \item If $t = 1$, then $J$ is of linear type. Hence, for all $k$, \[\bzero{\A{k}{J}} = \tdp{\A{k}{J}} = -\infty.\]
    \item {\normalfont \cite[Proposition 1.1]{HunekeDSequence}} If \,$n \le m+1$ and $t = m$, then $J$ is of linear type. Hence, for all $k$, \[\bzero{\A{k}{J}} = \tdp{\A{k}{J}} = -\infty.\]
    \item If $\,n \ge m+2$ and $t = m$, then $J$ is not of linear type on the punctured spectrum of $S$. Hence, for some $k$, \[\tdp{\A{k}{J}} = \infty.\]
    \item {\normalfont \cite[2.6]{EH}} If $\,t = m$, then $J$ is of fiber type. Hence, for all $k$, \[\bzero{\A{k}{J}} \le 0.\] 
    \item {\normalfont \cite[2.4]{Huneke}} If $\,n = m$ and $t = n-1$, then $J$ is of linear type. Hence, for all $k$,  \[\bzero{\A{k}{J}} = \tdp{\A{k}{J}} = -\infty.\]
    \item {\normalfont \cite[7.3]{HPPRS}} If $\,\chr K = 0$, $m = 3$, $n \ge 3$, and $t = 2$, then $J$ is of fiber type. Hence, \[\bzero{\A{k}{J}} \le 0.\] 
    \item If $\,t = 2$, then $J$ is of linear type on the punctured spectrum of $S$. Hence, for all $k$, \[\tdp{\A{k}{J}} < \infty.\]
    \item If $\,2 < t < m$ and it is not the case that $t+1=m=n$, then $J$ is not of linear type on the punctured spectrum of $S$. Hence, for some $k$, \[\tdp{\A{k}{J}} = \infty.\]
\end{enumerate}
\end{prop}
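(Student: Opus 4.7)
My plan is as follows. Items (2), (4), (5), (6) are quoted from the references listed and require no separate proof. Of the four remaining assertions, (1) and (7) reduce quickly to well-known properties of regular sequences, while (3) and (8) are substantively harder. For (1), the generators of $J=\detid{1}{X}$ are simply the distinct indeterminates $X_{ij}$, which form a regular sequence in $S$; the classical result that an ideal generated by a regular sequence is of linear type yields $\A{k}{J}=0$ for every $k$, so $\bzero{\A{k}{J}}=\tdp{\A{k}{J}}=-\infty$.

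For (7), I would prove $\A{}{J}$ is supported only at the irrelevant maximal ideal $\fm=\left(X_{ij}\right)$ by localizing at an arbitrary non-maximal prime $\fp$. Since $\fp\subsetneq\fm$, some entry $X_{i_{0}j_{0}}$ lies outside $\fp$, giving a unit $1\times 1$ minor, and \cref{ReductionLemma} with $j=1$ identifies $J_{\fp}$ with a suitable localization of $\detid{1}{Y}$, where $Y$ is a generic $(m-1)\times(n-1)$ matrix. The latter is generated by a regular sequence and is therefore of linear type, so $\A{}{J}_{\fp}=0$. Since $\A{k}{J}$ is a finitely generated graded $S$-module whose support is contained in $\brces{\fm}$, it has finite length, whence $\tdp{\A{k}{J}}<\infty$.

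For (3) and (8) the aim is to produce a non-maximal prime $\fp$ with $\A{}{J}_{\fp}\ne 0$ (equivalently, at which $\sym{}{J}$ strictly surjects onto $\rees{J}$). My plan is to combine the Huneke-Rossi dimension formula $\dim\sym{}{J}=\sup\!\brces{\dim S/\fp+\gens{S_{\fp}}{J_{\fp}}\mid\fp\in\spec{S}}$ with the local description of $J_{\fp}$ from \cref{ReductionLemma}: for the generic point $\fp$ of $V\!\left(\detid{r}{X}\right)$ with $1\le r\le t$, \cref{ReductionLemma} together with \cref{MinGens} identifies $J_{\fp}$ up to localization with $\detid{t-r+1}{Y}$ for a generic $(m-r+1)\times(n-r+1)$ matrix $Y$, giving $\gens{S_{\fp}}{J_{\fp}}=\binom{m-r+1}{t-r+1}\binom{n-r+1}{t-r+1}$, while $\dim S/\fp=mn-(m-r+1)(n-r+1)$ by the generic height formula. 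If for some such $r$ the sum of these two quantities exceeds $mn+1=\dim\rees{J}$, the symmetric algebra strictly exceeds the Rees ring in dimension at $\fp$, so $\A{}{J}_{\fp}\ne 0$; hence $J$ fails to be of linear type on the punctured spectrum, and $\A{k}{J}$ has a non-$\fm$-torsion element for some $k$, giving $\tdp{\A{k}{J}}=\infty$.

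The main obstacle will be verifying the required combinatorial inequality $\binom{m-r+1}{t-r+1}\binom{n-r+1}{t-r+1}>(m-r+1)(n-r+1)+1$ for each $(t,m,n)$ covered by (3) and (8): most parameter ranges yield to a choice of $r$ near $r=t-1$ or $r=t-2$, but the boundary cases (notably $n=m+2$ in (3) and those adjacent to the excluded case $t+1=m=n$ in (8)) require a careful combinatorial book-keeping, and for the most delicate marginal values one may need to supplement the Huneke-Rossi argument by exhibiting an explicit non-$\fm$-torsion Pl\"ucker-type quadric in $\A{k}{J}$, verifying its non-torsion status directly from the structure of $\mathcal{L}$.
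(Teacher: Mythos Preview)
The paper does not prove this proposition; it is stated as a compilation of known facts (parts (b), (d), (e), (f) carry citations, and the remaining parts are treated as folklore). So there is no ``paper's proof'' to compare against, and I comment only on the soundness of your plan.

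Your arguments for (1) and (7) are correct and standard: in (1) the generators form a regular sequence, and in (7) the reduction via \cref{ReductionLemma} to $\detid{1}{Y}$ is exactly the right move.

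For (3) and (8), however, there is a genuine gap. First, observe that (c) as stated is \emph{false} when $m=2$: if $t=m=2$ and $n\ge 4$, then (g) applies and says $J$ \emph{is} of linear type on the punctured spectrum, directly contradicting (c). So no argument can establish (c) in that range; the intended hypothesis in (c) is presumably $m\ge 3$. Second, even for $m\ge 3$, your Huneke--Rossi dimension comparison $\gens{S_\fp}{J_\fp}>\hgt\fp+1$ is not strong enough: for $t=m=3$, $n=5$, one checks that at every prime $\fp=\sqrt{\detid{r}{X}}$ with $r\ge 2$ the inequality fails (e.g.\ at $r=2$ one gets $\gens{}{}=6$ versus $\hgt{}+1=9$), yet $J$ is still not of linear type on the punctured spectrum. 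The point is that $\dim\sym{}{J_\fp}=\dim\rees{J_\fp}$ can hold while $\sym{}{J_\fp}\ne\rees{J_\fp}$, so dimension counting alone cannot close these cases, and your fallback of ``exhibiting an explicit non-$\fm$-torsion Pl\"ucker-type quadric'' is too vague to constitute a proof.

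A cleaner route for both (3) (with $m\ge 3$) and (8) is to take $\fp$ to be the generic point of $V\!\paren{\detid{2}{X}}$, apply \cref{ReductionLemma} with $j=1$ to identify $J_\fp$ with $\detid{t-1}{Y}$ localized at the ideal $\paren{Y_{rs}}$, where $Y$ is $(m-1)\times(n-1)$ generic over a field, and then use that for a graded ideal in a polynomial ring, failure of linear type globally is equivalent to failure at the homogeneous maximal ideal. This reduces the question to whether $\detid{t-1}{Y}$ is of linear type, which is known from parts (a), (b), (e) for the smaller matrix; tracking the excluded cases recovers exactly the hypotheses of (3) and (8) (in particular, the exclusion $t+1=m=n$ in (8) corresponds precisely to $\detid{t-1}{Y}$ landing in case (e) for the $(m-1)\times(n-1)$ matrix).
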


\bigskip

Some explicit minimal free resolutions of $J^{k}$ are known. For example, we have already stated minimal free resolutions in the case of the maximal minors of an ordinary matrix in \cref{ResolutionPowersHilbertBurch}. Note that this gives us $\bzero{\comp[i]{F^{k}}} = i$ for all $i$ with $1 \le i \le \min\!\brces{k,m}\paren{n-m}$ and $\bzero{\comp[i]{F^{k}}} = -\infty$ for all $i$ with $i > \min\!\brces{k,m}\paren{n-m}$.

\bigskip

\begin{thm}[Maximal Minors of an Ordinary Matrix]
\label[theorem]{TheoremMaximalMinors}
\hypertarget{TheoremMaximalMinors}{}
Let $K$ be a field, $R = \poly{K}{x_{1},\ldots,x_{d}}$ be a polynomial ring in $d > 0$ indeterminates over $K$, $m$ and $n$ be integers satisfying $1 \le m \le n$, $A$ be an $m \times n$ matrix whose entries are all homogeneous elements of $R$ of the same degree $\delta$, and $I = \detid{m}{A}$ be of generic height $($that is, $\hgt{I} = n-m+1)$. Further, suppose \[\hgt{\detid{j}{A}} \ge \min\!\brces{\paren{m-j+1}\paren{n-m}+1,d}\] for all $j$ in the range $1 \le j \le m-1$.
\begin{enumerate}
    \item Let $n = m$. Then , for all $k$,
    \[\bzero{\A{k}{I}} = \tdp{\A{k}{I}} = -\infty.\]
    
    \item Let $n = m+1$.

    If $d > \min\!\brces{k,m}$, then  
    \[\bzero{\A{k}{I}} = \tdp{\A{k}{I}} =  -\infty.\]

    If $d \le \min\!\brces{k,m}$, then  \[\bzero{\A{k}{I}} \le \paren{d-1}\paren{\delta -1} \text{ and } \tdp{\A{k}{I}} \le d\paren{\delta-1}.\]
    
    \item Let $n \ge m+2$.

    If $d-1 > \min\!\brces{k,m}\paren{n-m}$, then  
    \[\bzero{\A{k}{I}} \le 0.\] 
    
    If $d-1 \le \min\!\brces{k,m}\paren{n-m}$, then  
    \[\bzero{\A{k}{I}} \le  \paren{d-1}\paren{\delta -1}.\] 
\end{enumerate}
\end{thm}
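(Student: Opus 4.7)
The plan is to apply \cref{DegBoundRemark}(i), whose height hypothesis is precisely that of the theorem, and then specialize its conclusion using \cref{ResolutionPowersHilbertBurch} (describing the minimal free resolution $\comp{D^{k}}$ of $J^{k}\!\paren{mk}$) together with \cref{GlobalConditionsOrdinary} (providing the relevant vanishing and fiber type facts for $\A{k}{J}$). Concretely, \cref{DegBoundRemark}(i) gives, for every $k$,
\[ \bzero{\A{k}{I}} \le \max\!\brces{\delta \bzero{\A{k}{J}},\, \delta \bzero{\comp[d-1]{D^{k}}} - d + 1}, \]
\[ \tdp{\A{k}{I}} \le \max\!\brces{\delta \tdp{\A{k}{J}},\, \delta \bzero{\comp[d]{D^{k}}} - d}. \]
By \cref{ResolutionPowersHilbertBurch}, $\comp{D^{k}}$ is linear of length $\min\!\brces{k,m}\paren{n-m}$, so $\bzero{\comp[i]{D^{k}}} = i$ for $0 \le i \le \min\!\brces{k,m}\paren{n-m}$ and $-\infty$ otherwise. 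From \cref{GlobalConditionsOrdinary}, $\bzero{\A{k}{J}} = \tdp{\A{k}{J}} = -\infty$ when $n \le m+1$ (since $J$ is then of linear type), and $\bzero{\A{k}{J}} \le 0$ in general because $J$ is of fiber type when $t = m$.

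I would then handle the three cases. \textit{Case $n = m$:} the resolution has length $0$, hence $\comp[d]{D^{k}} = 0$ for all $d \ge 1$; together with linear type for $J$ this forces $\tdp{\A{k}{I}} \le -\infty$, so $\A{k}{I} = 0$ and both invariants equal $-\infty$ by the convention in \cref{BoundsNotation}. \textit{Case $n = m+1$:} here $\A{k}{J} = 0$ and the resolution has length $\min\!\brces{k,m}$. If $d > \min\!\brces{k,m}$, then $\comp[d]{D^{k}} = 0$, so the bound on $\tdp{\A{k}{I}}$ collapses to $-\infty$, forcing $\A{k}{I} = 0$ and hence also $\bzero{\A{k}{I}} = -\infty$. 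If $1 \le d \le \min\!\brces{k,m}$, then $\bzero{\comp[d-1]{D^{k}}} = d-1$ and $\bzero{\comp[d]{D^{k}}} = d$, and the identities $\delta(d-1) - d + 1 = (d-1)(\delta-1)$ and $\delta d - d = d(\delta-1)$ yield the stated bounds. \textit{Case $n \ge m+2$:} only the fiber type bound is available on the $\A{k}{J}$ side, which is why the theorem does not assert a bound on $\tdp{\A{k}{I}}$. If $d-1 > \min\!\brces{k,m}\paren{n-m}$, then $\comp[d-1]{D^{k}} = 0$ and the bound reduces to $\delta\bzero{\A{k}{J}} \le 0$. Otherwise, $\bzero{\comp[d-1]{D^{k}}} = d-1$ gives the bound $(d-1)(\delta-1)$, which dominates $0$ for $\delta, d \ge 1$.

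The argument is essentially bookkeeping once the machinery of Sections~\ref{SectionSpecialization} and \ref{DegBoundTools} is in place. The only mild subtlety is the $n = m+1$ subcase with $d > \min\!\brces{k,m}$: the resolution ends before index $d$, driving the bound on $\tdp{\A{k}{I}}$ to $-\infty$, and one must then invoke the convention in \cref{BoundsNotation} that the zero module has $\bzero = -\infty$ in order to deduce the analogous vanishing of $\bzero{\A{k}{I}}$, rather than relying directly on the $\bzero$ inequality from \cref{DegBoundRemark}, which in that subcase would only give $(d-1)(\delta-1)$.
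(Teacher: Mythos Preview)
Your proposal is correct and follows essentially the same approach as the paper: apply \cref{DegBoundRemark}(i), feed in the linear resolution data from \cref{ResolutionPowersHilbertBurch}, and use the linear type/fiber type facts for $J$ from \cref{GlobalConditionsOrdinary} to handle the $\A{k}{J}$ terms case by case. Your observation about the subtlety in the $n=m+1$, $d>\min\{k,m\}$ subcase (deducing $\bzero{\A{k}{I}}=-\infty$ from $\tdp{\A{k}{I}}=-\infty$ rather than from the $\bzero$ inequality directly) is exactly how the paper handles it as well.
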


\begin{proof}

For parts (b) and (c) we apply \cref{DegBoundRemark}.  In particular, for each $k$, we have 
\[ \bzero{\A{k}{I}} \le \max\!\brces{ \delta \bzero{\A{k}{J}}, \delta \bzero{\comp[d-1]{D^{k}}}-d+1 } , \text{ and}\]
\[ \tdp{\A{k}{I}} \le \max\!\brces{ \delta \tdp{\A{k}{J}}, \delta \bzero{\comp[d]{D^{k}}}-d }.\]

We now prove (b). 

By part (b) of \cref{GlobalConditionsOrdinary}, $\bzero{\A{k}{J}} = \tdp{\A{k}{J}} = -\infty$ for all $k$. Therefore,
\[ \bzero{\A{k}{I}} \le  \delta \bzero{\comp[d-1]{D^{k}}}-d+1 , \text{ and}\]
\[ \tdp{\A{k}{I}}  \le  \delta \bzero{\comp[d]{D^{k}}}-d .\]

By \cref{ResolutionPowersHilbertBurch}, if $d > \min\!\brces{k,m}\paren{n-m}$, then $\bzero{\comp[d]{D^{k}}} = -\infty$. Therefore $\tdp{\A{k}{I}} \le -\infty$. Hence $\bzero{\A{k}{I}} = -\infty$. Likewise, if $d \le \min\!\brces{k,m}\paren{n-m}$, then $\bzero{\comp[d]{D^{k}}} = d$. Therefore, we have $\tdp{\A{k}{I}} \le d\paren{\delta-1}$ and $\bzero{\comp[d-1]{D^{k}}} = d-1$. Therefore, $\bzero{\A{k}{I}} \le \paren{d-1}\paren{\delta-1}$. 

We now prove (c). By part (c) of \cref{GlobalConditionsOrdinary}, $\tdp{\A{k}{J}} = \infty$ for some $k$, so we do not draw any conclusions about $\tdp{\A{k}{I}}$ for any $k$. On the other hand, by part (d) of \cref{GlobalConditionsOrdinary}, we know $\bzero{\A{k}{J}} \le 0$ for all $k$. Therefore, we have \[ \bzero{\A{k}{I}} \le \max\!\brces{0, \delta \bzero{\comp[d-1]{D^{k}}}-d+1}.\]

By \cref{ResolutionPowersHilbertBurch}, if $d-1 > \min\!\brces{k,m}\paren{n-m}$, then $\bzero{\comp[d-1]{D^{k}}} = -\infty$. Thus, $\bzero{\A{k}{I}} \le 0$.  On the other hand, if $d-1 \le \min\!\brces{k,m}\paren{n-m}$, then $\bzero{\comp[d-1]{D^{k}}} = d-1$. Therefore, $\bzero{\A{k}{I}} \le \paren{d-1}\paren{\delta-1}$.
\end{proof}

\bigskip

\begin{cor}
Let $K$ be a field, $R = \poly{K}{x_{1},\ldots,x_{d}}$ be a polynomial ring in $d > 0$ indeterminates over $K$. Let $m$ and $n$ be integers satisfying $1 \le m \le n$, and let $A$ be an $m \times n$ matrix whose entries are all homogeneous elements of $R$ of the same degree $\delta$. Suppose $I = \detid{m}{A}$ is of generic height $($that is, $\hgt{I} = n-m+1)$, and suppose $\,\hgt{\detid{j}{A}} \ge \min\!\brces{\paren{m-j+1}\paren{n-m}+1,d}$ for all $j$ in the range $1 \le j \le m-1$.
\begin{enumerate}
    \item If $\,\delta = 1$, then $I$ is of fiber type. 
    \item If $\,n=m+1$ and $d > m$, then $I$ is of linear type.
    \item If $\,n=m+1$, $d \le m$, and $\,\delta = 1$, then $\paren{x_{1},\ldots,x_{d}}\A{}{I} = 0$.
    \item If $\,n \ge m+2\,$ and $\,d > m\paren{n-m}+1$, then $I$ is of fiber type.
\end{enumerate}
\end{cor}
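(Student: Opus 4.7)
The plan is to apply \cref{TheoremMaximalMinors} directly to each part, since the hypotheses of the corollary are precisely the hypotheses of that theorem. In each case the claim reduces to specializing the bounds on $\bzero{\A{k}{I}}$ and $\tdp{\A{k}{I}}$ to the prescribed values of $n$, $d$, and $\delta$, and recalling that $I$ is of fiber type iff $\bzero{\A{k}{I}}\le 0$ for all $k$ and of linear type iff $\A{k}{I}=0$ for all $k$.

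For part (a) I would substitute $\delta=1$ into the theorem; every bound of the form $(d-1)(\delta-1)$ or $d(\delta-1)$ collapses to $0$, and in each of the three cases $n=m$, $n=m+1$, $n\ge m+2$ the theorem's conclusion becomes either $\A{k}{I}=0$ or $\bzero{\A{k}{I}}\le 0$, yielding $\bzero{\A{k}{I}}\le 0$ for every $k$, i.e.\ $I$ of fiber type. For part (b), $d>m$ gives $d>\min\{k,m\}$ for every $k$, so the first sub-case of part (b) of the theorem applies and $\A{k}{I}=0$ for every $k$. For part (d), $d>m(n-m)+1$ gives $d-1>\min\{k,m\}(n-m)$ for every $k$, so the first sub-case of part (c) of the theorem gives $\bzero{\A{k}{I}}\le 0$ for every $k$.

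The only part needing a brief additional argument is part (c). Under its hypotheses, for $k<d$ one has $d>k=\min\{k,m\}$ and the theorem gives $\A{k}{I}=0$; for $k\ge d$ one has $d\le\min\{k,m\}$, so the second sub-case of part (b) of the theorem, combined with $\delta=1$, yields $\bzero{\A{k}{I}}\le 0$ and $\tdp{\A{k}{I}}\le 0$ simultaneously. Since $\A{}{I}$ is a submodule of the standard bigraded polynomial ring $S$, its $R$-graded components are supported in non-negative $R$-degrees, so these two bounds together force $\A{k}{I}$ to be concentrated in $R$-degree exactly $0$. Then $\fm\,\A{k}{I}$ would live in $R$-degrees $\ge 1$ while being contained in $\A{k}{I}$, hence vanishes, giving $\fm\,\A{}{I}=0$. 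The only subtle step in the whole corollary is this one: here one must use the generation-degree bound and the concentration-degree bound in tandem, rather than the generation degree alone as in the other parts.
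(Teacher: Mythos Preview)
Your proposal is correct and follows exactly the approach the paper intends: the corollary is stated without proof immediately after \cref{TheoremMaximalMinors} and is meant to be read off from the bounds there, precisely as you do.

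Two very minor remarks. First, $\A{}{I}$ is not literally a submodule of $S$; it is $\mathcal{J}/\mathcal{L}$, hence a submodule of $\sym{}{I(D)} = S/\mathcal{L}$. The point you need, that $\A{k}{I}$ is supported in nonnegative $R$-degrees, still holds since any subquotient of $S$ inherits this. Second, in part~(c) the concentration-degree bound $\tdp{\A{k}{I}}\le 0$ alone already forces $\A{k}{I}=[\A{k}{I}]_{0}$ (given nonnegative support), so $\fm\A{k}{I}=0$ follows without invoking the generation-degree bound; the two bounds need not be used ``in tandem'' here.
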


\bigskip

Parts (a), (b), and (c) are known results which we recover. Part (a) was proven by Bruns, Conca, and Varbaro in \cite[3.7]{BCV} using techniques from representation theory. Part (b) can be proven in the following way: first, note the requirements in the corollary and in part (b) imply $I$ satisfies $G_{\infty}$. Then, from the work of Ap\'{e}ry \cite{Apery} or of Gaeta \cite{Gaeta}, it is known that $I$ is in the linkage class of a complete intersection; thus, $I$ is strongly Cohen-Macaulay \cite[1.4]{Huneke3}. Hence, $I$ satisfies sliding depth. Finally, any ideal which satisfies $G_{\infty}$ and sliding depth is of linear type \cite{HSV:LinearType}. Part (c) follows from the work of Kustin, Polini, and Ulrich  \cite[6.1.a]{KPU}, where they applied the same technique we used here.  We are unaware of whether part (d) is known from other methods.

For other ideals of minors of ordinary matrices, explicit resolutions of $J^{k}$ are unknown. However, in certain cases, we have information regarding the regularities of these ideals. Thus, we may still obtain degree bounds.

\bigskip

\begin{thm}[$2 \times 2$ Minors]
\label[theorem]{Theorem2x2Minors}
\hypertarget{Theorem2x2Minors}{}
Let $K$ be a field of characteristic zero. Let $R = \poly{K}{x_{1},\ldots,x_{d}}$ be a polynomial ring in $d > 0$ indeterminates over $K$. Let $m$ and $n$ be integers satisfying $2 \le m \le n$ and let $A$ be an $m \times n$ matrix whose entries are all homogeneous elements of $R$ of the same degree $\delta$. Suppose $I = \detid{2}{A}$ is of generic height $($that is, $\hgt{I} = \paren{m-1}\paren{n-1})$, and let $X$ be an $m \times n$ generic matrix over $R$ and $J = \detid{2}{X}$. Further, suppose \hspace{0.08mm} $\hgt{\detid{1}{A}} \ge \min\!\brces{mn,d}$.
\begin{enumerate}
    \item Let $m = 3$. Then, for all $k$,
    \[\bzero{\A{k}{I}} \le \paren{d-1}\paren{\delta-1}, \text{ and }\] \[\tdp{\A{k}{I}} \le  \max\!\brces{\delta \tdp{\A{k}{J}}, d\paren{\delta-1}}.\]
    
    \item

    If \, $2 \le k \le m-2$, then  
    \[\bzero{\A{k}{I}} \le \max\!\brces{\delta \bzero{\A{k}{J}},  \paren{d-1}\paren{\delta-1} + \delta \paren{m-k-1}}, \text{ and }\] 
    \[\tdp{\A{k}{I}} \le \max\!\brces{\delta \tdp{\A{k}{J}},  d\paren{\delta-1} + \delta \paren{m-k-1}}.\]
    
    If \, $k \ge m-1$, then
    \[\bzero{\A{k}{I}} \le \max\!\brces{\delta \bzero{\A{k}{J}},  \paren{d-1}\paren{\delta-1} }, \text{ and }\] 
    \[\tdp{\A{k}{I}} \le \max\!\brces{\delta \tdp{\A{k}{J}},  d\paren{\delta-1}}.\]
\end{enumerate}
\end{thm}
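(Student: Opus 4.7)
The plan is to apply the specialization bound \cref{DegBoundRemark}.ii and then estimate the four quantities that appear on its right-hand side using \cref{GlobalConditionsOrdinary} for the symmetric-algebra terms and the Castelnuovo--Mumford regularity of powers of $I_{2}(X)$ for the free-resolution terms.

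First I verify that \cref{DegBoundRemark}.ii applies: with $t=2$ the only $j$ in its range is $j=1$, and the required inequality $\hgt{\detid{1}{A}} \ge \min\!\{(m-j+1)(n-j+1),d\}=\min\{mn,d\}$ is precisely the standing hypothesis. Hence for every $k$,
\[
\bzero{\A{k}{I}} \le \max\!\left\{\delta\bzero{\A{k}{J}},\; \delta\bzero{\comp[d-1]{D^{k}}}-d+1\right\},\qquad
\tdp{\A{k}{I}} \le \max\!\left\{\delta\tdp{\A{k}{J}},\; \delta\bzero{\comp[d]{D^{k}}}-d\right\}.
\]
Since $J^{k}(2k)$ is generated in degree $0$, its minimal graded free resolution $\comp{D^{k}}$ satisfies $\bzero{\comp[i]{D^{k}}} \le \REG(J^{k}(2k))+i=\REG(J^{k})-2k+i$. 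The next step is to invoke the characteristic-zero bound
\[
\REG(J^{k}) \le \max\{2k,\; k+m-1\},
\]
equivalently $\REG(J^{k}(2k)) \le \max\{0,\,m-k-1\}$; so the resolution of $J^{k}(2k)$ is linear when $k\ge m-1$ and has controlled sub-linear behavior when $k\le m-2$.

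For part (a) I use part (f) of \cref{GlobalConditionsOrdinary} (which applies since $\chr K = 0$, $m=3$, $t=2$) to conclude $\bzero{\A{k}{J}}\le 0$, absorbing that term into $(d-1)(\delta-1)$. Since $m=3$, every $k\ge 2$ satisfies $k\ge m-1$, so the linear-resolution estimate $\bzero{\comp[i]{D^{k}}}\le i$ applies and produces exactly $\delta(d-1)-d+1=(d-1)(\delta-1)$ and $\delta d - d = d(\delta-1)$. The degenerate case $k=1$ is immediate since $\A{1}{I}=0$ by the setup in \cref{SymmetricNotation}. The $\tdp{\A{k}{J}}$ term is carried through in the maximum, as no general finite bound is available.

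For part (b) I split on $k$: when $k\ge m-1$ the linear estimate gives the first pair of bounds exactly as in part (a), and when $2\le k \le m-2$ the sub-linear estimate $\bzero{\comp[i]{D^{k}}} \le i + (m-k-1)$ produces the extra $\delta(m-k-1)$ summand in both expressions; the $\bzero{\A{k}{J}}$ and $\tdp{\A{k}{J}}$ terms are retained in the final maximum. The main obstacle will be pinning down the regularity estimate $\REG(J^{k})\le\max\{2k,\,k+m-1\}$ in characteristic zero in the precise form needed; the linear-resolution range $k\ge m-1$ is well documented, but the sub-linear bound for $k\le m-2$ requires an appeal to a Schur-complex / Akin--Buchsbaum--Weyman style analysis of resolutions of powers of $I_{2}(X)$, and it is this representation-theoretic input that genuinely requires the characteristic-zero hypothesis.
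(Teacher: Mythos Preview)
Your proposal is correct and follows essentially the same approach as the paper's proof: apply \cref{DegBoundRemark}.ii, bound $\bzero{\comp[i]{D^{k}}}$ via $\REG J^{k}(2k)$, and then invoke the known regularity of powers of $I_{2}(X)$ together with part (f) of \cref{GlobalConditionsOrdinary} for the $m=3$ case. The only substantive difference is that where you flag the regularity estimate $\REG J^{k} = \max\{2k,\,k+m-1\}$ as an ``obstacle'' requiring an Akin--Buchsbaum--Weyman style argument, the paper simply cites Raicu's \emph{Theorem on Regularity}, which gives exactly this formula in characteristic zero; you should cite that result directly rather than leave it as an open step.
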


\begin{proof}
We apply \cref{DegBoundRemark}.  In particular, for each $k$, we have 
\[ \bzero{\A{k}{I}} \le \max\!\brces{ \delta \bzero{\A{k}{J}}, \delta \bzero{\comp[d-1]{D^{k}}}-d+1 } , \text{ and}\]
\[ \tdp{\A{k}{I}} \le \max\!\brces{ \delta \tdp{\A{k}{J}}, \delta \bzero{\comp[d]{D^{k}}}-d }.\]

Furthermore, by part (g) of \cref{GlobalConditionsOrdinary}, $\tdp{\A{k}{J}} < \infty$ for all $k$. Thus, we can place meaningful bounds on $\tdp{\A{k}{I}}$. 

Since $\compp{D^{k}}$ is the minimal homogeneous free resolution of $J^{k}\!\paren{2k}$, $\bzero{\compp[d-1]{D^{k}}} \le \REG J^{k}\!\paren{2k} +d-1$ and $\bzero{\compp[d]{D^{k}}} \le \REG J^{k}\!\paren{2k} +d$. Therefore,
\[ \delta \bzero{\compp[d-1]{D^{k}}}-d+1 \le \delta \REG J^{k}\!\paren{2k} + \paren{d-1}\paren{\delta-1}, \text{ and}\]
\[ \delta \bzero{\compp[d]{D^{k}}}-d \le \delta \REG J^{k}\!\paren{2k} + d\paren{\delta-1}.\]

Note $\REG J^{k}\!\paren{2k} = \REG J^{k}-2k$. 

By \cite[Theorem on Regularity]{Raicu}, if $2 \le k \le m-2$, then $\REG J^{k} = k+m-1$. Hence, $\REG J^{k}\!\paren{2k} = m-k-1$. Thus, we conclude that for $2 \le k \le m-2$,
\[\bzero{\A{k}{I}} \le \max\!\brces{\delta \bzero{\A{k}{J}},  \paren{d-1}\paren{\delta-1} + \delta \paren{m-k-1}}, \text{ and }\] 
\[\tdp{\A{k}{I}} \le \max\!\brces{\delta \tdp{\A{k}{J}},  d\paren{\delta-1} + \delta \paren{m-k-1}}.\]
    
By \cite[Theorem on Regularity]{Raicu}, if $k \ge m-1$, then $\REG J^{k} = 2k$. Therefore, given $k \ge m-1$,
\[\bzero{\A{k}{I}} \le \max\!\brces{\delta \bzero{\A{k}{J}},  \paren{d-1}\paren{\delta-1}}, \text{ and }\] 
\[\tdp{\A{k}{I}} \le \max\!\brces{\delta \tdp{\A{k}{J}},  d\paren{\delta-1}}.\]

This concludes the proof of (b). The proof of (a) is a special case.

In the setting of (a), $m = 3$. Thus, $k \ge m-1$ if and only if $k \ge 2$. Therefore, we restrict our attention to the case where $k \ge m-1$ above. By part (f) of \cref{GlobalConditionsOrdinary}, we have $\bzero{\A{k}{J}} \le 0$ for all $k$. Hence, for all $k$, 
\[\bzero{\A{k}{I}} \le  \paren{d-1}\paren{\delta-1}, \text{ and }\] 
\[\tdp{\A{k}{I}} \le \max\!\brces{\delta \tdp{\A{k}{J}},  d\paren{\delta-1}}.\]
\end{proof}

\bigskip

\begin{cor}
\label[cor]{FiberTypeSize2}
\hypertarget{FiberTypeSize2}{}
Let $K$ be a field of characteristic zero. Let $R = \poly{K}{x_{1},\ldots,x_{d}}$ be a polynomial ring in $d > 0$ indeterminates over $K$. Let $n$ be an integer satisfying $3 \le n$, and let $A$ be a $3 \times n$ matrix whose entries are all homogeneous elements of $R$ of degree $1$. Suppose $I = \detid{2}{A}$ is of generic height $($that is, $\hgt{I} = 2\paren{n-1})$ and suppose \hspace{0.08mm} $\hgt{\detid{1}{A}} \ge \min\!\brces{3n,d}$. Then $I$ is of fiber type.
\end{cor}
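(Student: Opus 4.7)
The plan is to deduce this corollary as an immediate consequence of part (a) of \cref{Theorem2x2Minors}. The setup of the corollary matches the hypotheses of that theorem exactly with $m=3$ and $\delta = 1$: $K$ is a field of characteristic zero, $R$ is a polynomial ring in $d > 0$ indeterminates, $A$ is a $3 \times n$ matrix whose entries are homogeneous of common degree $\delta = 1$, the ideal $I = \detid{2}{A}$ has the required generic height $2(n-1)$, and the height condition $\hgt{\detid{1}{A}} \ge \min\{3n, d\}$ is precisely $\hgt{\detid{1}{A}} \ge \min\{mn, d\}$.

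With these hypotheses in place, \cref{Theorem2x2Minors}(a) gives the bound
\[ \bzero{\A{k}{I}} \le (d-1)(\delta - 1) \]
for every $k$. Substituting $\delta = 1$, the right-hand side collapses to $0$, so $\bzero{\A{k}{I}} \le 0$ for all $k$. By the characterization of the fiber-type property recorded in \cref{LinearTypeNotation}, this inequality is equivalent to $I$ being of fiber type, completing the proof.

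There is no real obstacle here; the content is entirely encapsulated in the earlier $m=3$ case of the $2 \times 2$ minors theorem, and the point of the corollary is simply to observe that the linearity of the entries forces $\delta - 1 = 0$, killing the error term $(d-1)(\delta-1)$ coming from the specialization comparison. If one wished to be self-contained, the only step to re-examine would be the derivation of $\bzero{\A{k}{I}} \le (d-1)(\delta-1)$ in \cref{Theorem2x2Minors}(a), which is in turn an application of \cref{DegBoundRemark} combined with the fiber-type bound $\bzero{\A{k}{J}} \le 0$ for $2 \times 2$ minors of a generic $3 \times n$ matrix from \cref{GlobalConditionsOrdinary}(f); but since that theorem is proved above, the corollary needs no further argument.
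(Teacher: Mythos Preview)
Your proposal is correct and matches the paper's approach exactly: the corollary is stated immediately after \cref{Theorem2x2Minors} without a separate proof, since it follows directly from part (a) with $m=3$ and $\delta=1$, yielding $\bzero{\A{k}{I}} \le (d-1)(\delta-1)=0$ for all $k$.
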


\bigskip

In other cases, Raicu is not able to compute the regularity for all powers of the ideal $J$, but is able to compute the regularity for sufficiently high powers.

\bigskip

\begin{thm}[Submaximal Minors of a Square Matrix]
\label[theorem]{TheoremSubmaximalMinorsOrdinary}
\hypertarget{TheoremSubmaximalMinorsOrdinary}{}
Let $K$ be a field of characteristic zero. Let $R = \poly{K}{x_{1},\ldots,x_{d}}$ be a polynomial ring in $d > 0$ indeterminates over $K$. Let $n$ be an integer satisfying $\,2 \le n$, and let $A$ be an $n \times n$ matrix whose entries are all homogeneous elements of $R$ of the same degree $\delta$. Suppose $I = \detid{n-1}{A}$ is of generic height $($that is, $\hgt{I} = 4)$ and suppose $\hgt{\detid{j}{A}} \ge \min\!\brces{\paren{n-j+1}^{2},d}$ for all $j$ in the range $1 \le j \le n-2$.

If $\,k \ge n-1$, then
\[ \bzero{\A{k}{I}} \le  \paren{d-1}\paren{\delta-1}+\delta N\!\paren{n}  \text{, and } \]\[ \tdp{\A{k}{I}} \le  d\paren{\delta-1}+\delta N\!\paren{n}, \]
    where $N\!\paren{n} = \begin{cases}
\paren{\frac{n-2}{2}}^{2} & \text{ if } n \text{ is even}\\
\frac{\paren{n-3}\paren{n-1}}{4} & \text{ if } n \text{ is odd.}
\end{cases}$
\end{thm}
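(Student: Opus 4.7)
The plan is to mirror the strategy used in \cref{TheoremMaximalMinors} and \cref{Theorem2x2Minors}: invoke \cref{DegBoundRemark}, dispose of the $\A{k}{J}$ term using the generic-case structural results in \cref{GlobalConditionsOrdinary}, and translate a regularity bound for $J^{k}$ into bounds on the top two Betti degrees of the minimal resolution $\compp{D^{k}}$.

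First I would verify the height hypothesis matches the hypothesis in \cref{DegBoundRemark}(ii): we have $t = n-1 < m = n$, and the bound $\hgt{\detid{j}{A}} \ge \min\!\brces{(n-j+1)^{2},d} = \min\!\brces{(m-j+1)(n-j+1),d}$ is exactly what is required. So, for every $k$,
\[ \bzero{\A{k}{I}} \le \max\!\brces{ \delta \bzero{\A{k}{J}}, \delta \bzero{\compp[d-1]{D^{k}}}-d+1 } \text{ and } \tdp{\A{k}{I}} \le \max\!\brces{ \delta \tdp{\A{k}{J}}, \delta \bzero{\compp[d]{D^{k}}}-d }. \]
Next, since $m=n$ and $t=n-1$, part (e) of \cref{GlobalConditionsOrdinary} gives that $J = \detid{n-1}{X}$ is of linear type, so $\bzero{\A{k}{J}} = \tdp{\A{k}{J}} = -\infty$ for all $k$. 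The two bounds collapse to
\[ \bzero{\A{k}{I}} \le \delta \bzero{\compp[d-1]{D^{k}}} - d + 1 \quad\text{and}\quad \tdp{\A{k}{I}} \le \delta \bzero{\compp[d]{D^{k}}} - d. \]

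Now I would use the fact that $\compp{D^{k}}$ is the minimal homogeneous free resolution of $J^{k}\!\paren{(n-1)k}$, so for any index $i$ one has $\bzero{\compp[i]{D^{k}}} \le \REG J^{k}\!\paren{(n-1)k} + i = \REG J^{k} - (n-1)k + i$. Substituting $i = d-1$ and $i = d$ respectively produces
\[ \bzero{\A{k}{I}} \le \delta\!\paren{\REG J^{k} - (n-1)k} + (d-1)(\delta-1) \quad\text{and}\quad \tdp{\A{k}{I}} \le \delta\!\paren{\REG J^{k} - (n-1)k} + d(\delta-1). \]
It then suffices to prove that for $k \ge n-1$ one has $\REG J^{k} - (n-1)k \le N\!\paren{n}$. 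This is exactly what Raicu's regularity theorem (\cite[Theorem on Regularity]{Raicu}, already invoked in the proof of \cref{Theorem2x2Minors}) provides in the stable range: the explicit piecewise formula for $\REG\detid{n-1}{X}^{k}$ for $k \ge n-1$ yields the stated $N\!\paren{n}$, with the parity split reflecting the two cases in Raicu's formula.

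The main obstacle is extracting the correct value of $\REG J^{k} - (n-1)k$ from Raicu's result in the two parity cases and verifying that $k \ge n-1$ is the precise stable range in which his formula applies; the rest of the argument is a direct specialization of the general bound in \cref{DegBoundRemark} once the linear-type property of the generic submaximal case is used to kill the $\A{k}{J}$ contribution.
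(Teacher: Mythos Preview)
Your proposal is correct and follows essentially the same approach as the paper: apply \cref{DegBoundRemark}(ii), use part (e) of \cref{GlobalConditionsOrdinary} to eliminate the $\A{k}{J}$ terms via linear type, bound $\bzero{\compp[i]{D^{k}}}$ by $\REG J^{k}\!\paren{(n-1)k}+i$, and then invoke Raicu's regularity formula in the stable range $k \ge n-1$ to obtain $N\!\paren{n}$.
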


\begin{proof}
We apply \cref{DegBoundRemark}. By part (e) of \cref{GlobalConditionsOrdinary}, $\bzero{\A{k}{J}} = \tdp{\A{k}{J}} = -\infty$ for all $k$. Therefore, we have
\[ \bzero{\A{k}{I}} \le  \delta \bzero{\comp[d-1]{F^{k}}}-d+1 , \text{ and}\]
\[ \tdp{\A{k}{I}}  \le  \delta \bzero{\comp[d]{F^{k}}}-d .\]

Since $\compp{D^{k}}$ is the minimal homogeneous free resolution of $J^{k}\!\paren{k\paren{n-1}}$, we have $\bzero{\compp[d-1]{F^{k}}} \le \REG J^{k}\!\paren{k\paren{n-1}} +d-1$ and $\bzero{\compp[d]{F^{k}}} \le \REG J^{k}\!\paren{k\paren{n-1}} +d$. Therefore,
\[ \delta \bzero{\compp[d-1]{F^{k}}}-d+1 \le \delta \REG J^{k}\!\paren{k\paren{n-1}} + \paren{d-1}\paren{\delta-1}, \text{ and}\]
\[ \delta \bzero{\compp[d]{F^{k}}}-d \le \delta \REG J^{k}\!\paren{k\paren{n-1}} + d\paren{\delta-1}.\]

By \cite[Theorem on Regularity]{Raicu}, if $k \ge n-1$, then \[\REG J^{k} = k\paren{n-1}+\begin{cases}
\paren{\frac{n-2}{2}}^{2} & \text{ if } n \text{ is even}\\
\frac{\paren{n-3}\paren{n-1}}{4} & \text{ if } n \text{ is odd.}
\end{cases}\] 

Let $N\!\paren{n} = \begin{cases}
\paren{\frac{n-2}{2}}^{2} & \text{ if } n \text{ is even}\\
\frac{\paren{n-3}\paren{n-1}}{4} & \text{ if } n \text{ is odd.}
\end{cases}$

Therefore, we conclude that for $k \ge n-1$,
\[\bzero{\A{k}{I}} \le  \paren{d-1}\paren{\delta-1} + \delta N\!\paren{n}, \text{ and }\] 
\[\tdp{\A{k}{I}} \le  d\paren{\delta-1} + \delta N\!\paren{n}.\]
\end{proof}

\bigskip

\begin{cor}
Let $K$ be a field of characteristic zero. Let $R = \poly{K}{x_{1},\ldots,x_{d}}$ be a polynomial ring in $d > 0$ indeterminates over $K$, and let $A$ be a $3 \times 3$ matrix whose entries are all homogeneous elements of $R$ of degree $1$. Suppose $I = \detid{2}{A}$ is of generic height $($that is, $\hgt{I} = 4)$, and suppose  $\hgt{\detid{1}{A}} \ge \min\!\brces{9,d}$. Then $I$ is of fiber type and $\paren{x_{1},\ldots,x_{d}}\A{}{I} = 0$.
\end{cor}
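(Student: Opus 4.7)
The plan is to recognize that this corollary is essentially the specialization of \cref{TheoremSubmaximalMinorsOrdinary} to the particular parameters $n = 3$ and $\delta = 1$, together with a dimension-$k$ sanity check for the small values of $k$ not covered by that theorem.

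First, I would verify that the hypotheses of \cref{TheoremSubmaximalMinorsOrdinary} are met: with $n = 3$, the condition $\hgt\paren{\detid{j}{A}} \ge \min\!\brces{\paren{n-j+1}^{2},d}$ for $1 \le j \le n-2$ reduces to the single requirement $\hgt\paren{\detid{1}{A}} \ge \min\!\brces{9,d}$, which is exactly the hypothesis. The generic-height assumption $\hgt I = 4$ is also exactly what is given. Next, I would compute $N\!\paren{3}$: since $n=3$ is odd, $N\!\paren{3} = \tfrac{\paren{n-3}\paren{n-1}}{4} = 0$. Plugging $n = 3$, $\delta = 1$, and $N\!\paren{3} = 0$ into the theorem yields, for every $k \ge n-1 = 2$,
\[
\bzero{\A{k}{I}} \le \paren{d-1}\paren{\delta-1} + \delta N\!\paren{n} = 0, \qquad \tdp{\A{k}{I}} \le d\paren{\delta-1} + \delta N\!\paren{n} = 0.
\]

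The remaining step is to dispose of the small values $k = 0$ and $k = 1$. By definition $\A{0}{I} = 0$, so $\bzero{\A{0}{I}} = \tdp{\A{0}{I}} = -\infty$. For $k = 1$, the discussion in \cref{SymmetricNotation} (specifically, that $\mathcal{L} = S\brackets{\mathcal{J}}_{\paren{*,1}}$ forces $\brackets{\mathcal{A}}_{\paren{*,1}} = 0$) shows $\A{1}{I} = 0$, and likewise $\bzero{\A{1}{I}} = \tdp{\A{1}{I}} = -\infty$. Thus both degree bounds $\bzero{\A{k}{I}} \le 0$ and $\tdp{\A{k}{I}} \le 0$ hold for all $k \ge 0$.

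Finally, from $\bzero{\A{k}{I}} \le 0$ for all $k$, the definition in \cref{LinearTypeNotation} gives that $I$ is of fiber type. From $\tdp{\A{k}{I}} \le 0$ for all $k$, every bihomogeneous element of $\A{}{I}$ lies in $x$-degree $0$, so multiplication by any $x_i$ annihilates $\A{}{I}$; that is, $\paren{x_{1},\ldots,x_{d}}\A{}{I} = 0$. There is no real obstacle here; the only care needed is to notice that $N\!\paren{3} = 0$ and that $\delta - 1 = 0$, which together collapse the bounds of \cref{TheoremSubmaximalMinorsOrdinary} to the two equalities above, and to handle $k \le 1$ by the elementary observations just mentioned.
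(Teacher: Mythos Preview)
Your proposal is correct and follows exactly the approach the paper intends: the corollary is stated immediately after \cref{TheoremSubmaximalMinorsOrdinary} without an explicit proof, and your specialization $n=3$, $\delta=1$, $N\!\paren{3}=0$ together with the elementary handling of $k\le 1$ is precisely the intended argument.
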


\bigskip

\begin{thm}[Minors of an Ordinary Matrix]
\label[theorem]{TheoremArbitraryMinors}
\hypertarget{TheoremArbitraryMinors}{}
Let $K$ be a field of characteristic zero. Let $R = \poly{K}{x_{1},\ldots,x_{d}}$ be a polynomial ring in $d > 0$ indeterminates over $K$. Suppose $t$, $m$, and $n$ are integers satisfying $2 < t < m \le n$, and let $A$ be an $m \times n$ matrix whose entries are all homogeneous elements of $R$ of the same degree $\delta$. Suppose $I = \detid{t}{A}$ is of generic height $($that is, $\hgt{I} = \paren{m-t+1}\paren{n-t+1})$. Let $X$ be an $m \times n$ generic matrix over $R$ and $J = \detid{t}{X}$. Further, suppose \hspace{0.08mm} $\hgt{\detid{j}{A}} \ge \min\!\brces{\paren{m-j+1}\paren{n-j+1},d}$ for all $j$ in the range $1 \le j \le t-1$.

If \, $k \ge m-1$, then
\[ \bzero{\A{k}{I}} \le  \max\brces{\delta\bzero{\A{k}{J}}, \paren{d-1}\paren{\delta-1}+\delta N\!\paren{t}},  \]
    where $N\!\left(t\right) = \begin{cases}
    \left(\frac{t-1}{2}\right)^{2}, & t \text{ is odd}\\
    \frac{\left(t-2\right)t}{4}, & t \text{ is even.}\\
    \end{cases}$
\end{thm}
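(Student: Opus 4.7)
The plan is to mimic the proofs of \cref{Theorem2x2Minors} and \cref{TheoremSubmaximalMinorsOrdinary} almost verbatim. Specifically, I intend to combine the specialization machinery from \cref{DegBoundRemark} (which reduces the task to controlling $\bzero{\compp[d-1]{D^k}}$ and $\bzero{\A{k}{J}}$) with a regularity statement for powers of generic determinantal ideals of intermediate size coming from Raicu's theorem. Note that we only aim at a bound on $\bzero{\A{k}{I}}$, because in the hypothesized range $2<t<m$ (excluding the special case $t+1=m=n$), \hyperref[GlobalConditionsOrdinary]{\cref{GlobalConditionsOrdinary}.h} says $J$ fails to be of linear type on the punctured spectrum, so $\tdp{\A{k}{J}}=\infty$ for some $k$ and no finite bound for $\tdp{\A{k}{I}}$ is available by our method.

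The steps, in order, are the following. First I would invoke \hyperref[DegBoundRemark]{\cref{DegBoundRemark}.ii}, whose height hypotheses are exactly the ones assumed in the theorem (in its proof only the range $1\le j\le t-1$ is actually used, via \cref{ContainmentLemma} and the sequence $\sigma(j)=(m-j)(n-j)$), to deduce
\[ \bzero{\A{k}{I}} \le \max\!\brces{\delta\bzero{\A{k}{J}},\ \delta\bzero{\compp[d-1]{D^k}}-d+1}.\]
Second, since $\compp{D^k}$ is the minimal homogeneous free $B$-resolution of $J^k\!\paren{tk}$, I use the standard Castelnuovo--Mumford inequality
\[ \bzero{\compp[d-1]{D^k}} \le \REG\!\paren{J^k\!\paren{tk}}+d-1 = \REG J^k - tk + d-1.\]
Third, I apply \cite[Theorem on Regularity]{Raicu} in the range $k\ge m-1$. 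That theorem provides, for $J=\detid{t}{X}$ with $2<t\le m\le n$ and $k\ge m-1$, an equality of the form $\REG J^k = tk + N(t)$ where $N(t)=\paren{\tfrac{t-1}{2}}^2$ if $t$ is odd and $\tfrac{(t-2)t}{4}$ if $t$ is even; this specializes to the known regularity formulas used in the proofs of \cref{Theorem2x2Minors} (taking $t=2$, giving $N(2)=0$) and \cref{TheoremSubmaximalMinorsOrdinary} (taking $t=n-1$ in an $n\times n$ matrix, giving $N(n-1)=\paren{\tfrac{n-2}{2}}^2$ or $\tfrac{(n-3)(n-1)}{4}$). Finally, I simplify
\[ \delta\bzero{\compp[d-1]{D^k}}-d+1 \le \delta\paren{N(t)+d-1}-d+1 = (d-1)(\delta-1)+\delta N(t),\]
and substitute into the first inequality to obtain the desired bound.

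The main obstacle, such as it is, is purely bibliographic: I must verify that \cite{Raicu}'s regularity theorem indeed gives a closed-form expression of the shape $\REG J^k = tk+N(t)$ in the range $k\ge m-1$ for all $2<t<m$, with the stated parity-dependent $N(t)$. All the other ingredients --- the specialization sequence, the approximate resolution, and the Castelnuovo--Mumford comparison --- are already in place from \cref{SectionSpecialization} and \cref{DegBoundTools} and were used identically in \cref{Theorem2x2Minors} and \cref{TheoremSubmaximalMinorsOrdinary}, so no new technical difficulty arises. The arithmetic reduction in the last step is the same calculation carried out in \cref{TheoremSubmaximalMinorsOrdinary}, now applied with $t$ in place of $n-1$.
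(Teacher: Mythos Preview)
Your proposal is correct and follows essentially the same approach as the paper: invoke \cref{DegBoundRemark} (case ii), note via \hyperref[GlobalConditionsOrdinary]{\cref{GlobalConditionsOrdinary}.h} that no bound on $\tdp{\A{k}{I}}$ is attempted, bound $\bzero{\compp[d-1]{D^{k}}}$ through $\REG J^{k}\!\paren{tk}$, and then apply \cite[Theorem on Regularity]{Raicu} for $k \ge m-1$ to obtain $\REG J^{k} = tk + N\!\paren{t}$. Your observation that only the range $1 \le j \le t-1$ is actually needed in the proof of \cref{DegBoundRemark} is also correct and matches the hypotheses of the theorem.
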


\begin{proof}
By part (h) of \cref{GlobalConditionsOrdinary}, $\tdp{\A{k}{J}} = \infty$ for some $k$, so we do not draw conclusions about $\tdp{\A{k}{I}}$ for any $k$.

We apply \cref{DegBoundRemark} together with \cite[Theorem on Regularity]{Raicu} whence $k \ge m-1$ implies \[\REG J^{k} = tk+\begin{cases}
\paren{\frac{t-1}{2}}^{2} & \text{ if } t \text{ is odd}\\
\frac{\paren{t-2}t}{4} & \text{ if } t \text{ is even.}
\end{cases}\] 

Hence we define $N\!\paren{t} = \begin{cases}
\paren{\frac{t-1}{2}}^{2} & \text{ if } t \text{ is odd}\\
\frac{\paren{t-2}t}{4} & \text{ if } t \text{ is even.}
\end{cases}$

Then for $k \ge n-1$,
\[ \delta \bzero{\compp[d-1]{F^{k}}}-d+1 \le \delta N\!\paren{t} + \paren{d-1}\paren{\delta-1}.\]

Thus, we conclude that for $k \ge n-1$,
\[ \bzero{\A{k}{I}} \le  \max\brces{\delta\bzero{\A{k}{J}}, \paren{d-1}\paren{\delta-1}+\delta N\!\paren{t}}.  \]
\end{proof}

\bigskip

\subsection{Degree Bounds for Symmetric Matrices}
\label{SymSection}
\hypertarget{SymSection}{}

In this subsection, we consider the case where $I$ is the ideal of minors of a symmetric matrix. As in \cref{OrdSection}, we begin with known results on $\bzero{\A{k}{J}}$ and $\tdp{\A{k}{J}}$, where $J$ is the corresponding determinantal ideal of a generic symmetric matrix.

\bigskip

\begin{prop}
\label[proposition]{GlobalConditionsSymmetric}
\hypertarget{GlobalConditionsSymmetric}{}
Let $t$ and $n$ be integers satisfying $1 \le t \le n$, $K$ be a field, $X$ be an $n \times n$ generic symmetric matrix over $K$, $S = \poly{K}{X}$, and $J = \detid{t}{X}$.
\begin{enumerate}
    \item If \,$t = 1$, then $J$ is of linear type. Hence, for all $k$, \[\bzero{\A{k}{J}} = \tdp{\A{k}{J}} = -\infty.\]
    \item If \,$t = n$, then $J$ is of linear type. Hence, for all $k$, \[\bzero{\A{k}{J}} = \tdp{\A{k}{J}} = -\infty.\]
    \item {\normalfont \cite[2.10]{Kotzev}} If $t = n-1$, then $J$ is of linear type. Hence, for all $k$, \[\bzero{\A{k}{J}} = \tdp{\A{k}{J}} = -\infty.\]
    \item If \,$t = 2$, then $J$ is of linear type on the punctured spectrum. Hence, for all $k$, \[\tdp{\A{k}{J}} < \infty.\]
    \item If \,$2 < t < n-1$, then $J$ is not of linear type on the punctured spectrum. Hence, for some $k$, \[\tdp{\A{k}{J}} = \infty.\]
\end{enumerate}
\end{prop}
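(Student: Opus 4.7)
I would handle the five parts separately, with (a), (b), (c) being essentially formal and (d), (e) carrying the real content. For (a), $J=\detid{1}{X}$ is generated by the algebraically independent upper-triangular entries of $X$, which form a regular sequence in $S$; ideals generated by regular sequences are of linear type. For (b), $J=\detid{n}{X}$ is principal, so trivially of linear type. Part (c) is a direct appeal to \cite[2.10]{Kotzev}.

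For (d), the plan is to verify that $J_\fp$ is of linear type for every $\fp\in\spec{S}$ with $\fp\ne\fm$, where $\fm$ denotes the irrelevant maximal ideal of $S$. The case $\fp\notin V\!\left(J\right)$ is trivial since then $J_\fp=S_\fp$. For $\fp\in V\!\left(J\right)$ on the punctured spectrum, I would first argue that some diagonal entry $X_{ii}$ lies outside $\fp$: otherwise every $X_{ii}$ would lie in $\fp$, and combining with $X_{ii}X_{jj}-X_{ij}^{2}\in J\subseteq\fp$ would force $X_{ij}^{2}\in\fp$, hence $X_{ij}\in\fp$ for all $i,j$, contradicting $\fp\ne\fm$. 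With such a unit diagonal entry in $S_\fp$, I would apply the symmetric analogue of \cref{ReductionLemma} (noted just after its proof) with $U=(X_{ii})$ to identify $J_\fp$, after base change and localization, with a localization of $\detid{1}{Y}$ for an $(n-1)\times(n-1)$ generic symmetric matrix $Y$. Since part (a) shows $\detid{1}{Y}$ is of linear type and linear type is preserved under localization, $J_\fp$ is of linear type. Consequently each $\A{k}{J}$ is supported at $\{\fm\}$ alone, hence is Artinian, giving $\tdp{\A{k}{J}}<\infty$ for all $k$.

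For (e), I would appeal to \cref{GsSymmetric}: when $2<t<n-1$, the maximal $s$ with $J$ satisfying $G_s$ is $s=\binom{n-t+3}{2}$. The failure of $G_{s+1}$ produces a prime $\fp\in V\!\left(J\right)$ with $\dim S_\fp\le s$ and $\gens{S_\fp}{J_\fp}>\dim S_\fp$, and since $t>2$ forces $\binom{n-t+3}{2}<\binom{n+1}{2}=\dim S$, this prime lies on the punctured spectrum. To conclude that $J_\fp$ is not of linear type, I would use the analytic-spread obstruction: if $J_\fp$ were of linear type, then $\rees{J_\fp}\cong\sym{}{J_\fp}$ and so $\fiber{J_\fp}$ would be a polynomial ring over $\resfld{\fp}$ in $\gens{S_\fp}{J_\fp}$ variables, giving $\aspread{J_\fp}=\gens{S_\fp}{J_\fp}>\dim S_\fp$ and contradicting the general bound $\aspread{J_\fp}\le\dim S_\fp$ recalled in \cref{AnalyticSpreadNotation}. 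Hence $\mathcal{A}(J)_\fp\ne 0$ at a non-maximal prime, and since $\mathcal{A}(J)=\bigoplus_k \A{k}{J}$ as graded $S$-modules, some $\A{k}{J}$ has support containing $\fp$; that support is not contained in $\{\fm\}$, so $\A{k}{J}$ has positive Krull dimension and $\tdp{\A{k}{J}}=\infty$ for that $k$.

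The main obstacle should be the bookkeeping in (d), namely carefully tracing through the symmetric version of \cref{ReductionLemma} to confirm that $J_\fp$ really is identified with a localization of $\detid{1}{Y}$ in the polynomial extension $C\!\left[Y\right]$, and that the linear-type property transfers back along the chain of isomorphisms and localizations. The analytic-spread obstruction used in (e) is essentially standard, as are the observations in (a)--(c).
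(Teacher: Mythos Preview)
The paper states this proposition without proof: it is presented as a collection of facts, with (c) attributed to \cite[2.10]{Kotzev} and the remaining parts left as standard or folklore. Your proposal supplies correct arguments throughout. Parts (a)--(c) are indeed formal. Your argument for (d) is the natural one and matches the spirit of the paper's localization machinery: the observation that a prime in $V\!\left(J\right)\setminus\{\fm\}$ must omit some diagonal entry (via the principal $2\times 2$ minor $X_{ii}X_{jj}-X_{ij}^{2}$) is exactly what is needed to invoke the symmetric version of \cref{ReductionLemma} and reduce to $\detid{1}{Y}$. Your argument for (e), extracting a non-maximal prime from the failure of $G_{s+1}$ in \cref{GsSymmetric} and then using the analytic-spread identity $\aspread{J_{\fp}}=\gens{S_{\fp}}{J_{\fp}}$ for linear-type ideals to contradict the bound in \cref{AnalyticSpreadNotation}, is standard and correct; the inequality $\binom{n-t+3}{2}<\binom{n+1}{2}$ for $t>2$ guarantees the witnessing prime is indeed non-maximal.
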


\bigskip

Unfortunately, we are not aware of any work which has been done to compute resolutions of $J^{k}$, nor are we aware of any work which has been done to calculate the regularity of $J^{k}$. Therefore, we are not able to obtain explicit degree bounds using our method. However, using the fact that $J$ is of linear type when $t = n-1$, we can state the following result for submaximal minors.

\bigskip

\begin{prop}[Submaximal Minors]
\label[prop]{SymmetricSubmaximal}
\hypertarget{SymmetricSubmaximal}{}
Let $K$ be a field. Let $R = \poly{K}{x_{1},\ldots,x_{d}}$ be a polynomial ring in $d > 0$ indeterminates over $K$. Suppose $n$ is an  integer satisfying $2 \le n$, and let $A$ be an $n \times n$ symmetric matrix whose entries are all homogeneous elements of $R$ of the same degree $\delta$. Suppose $I = \detid{n-1}{A}$ be of generic symmetric height $($that is, $\hgt{I} = 3)$, and suppose $\hgt{\detid{j}{A}} \ge \min\!\brces{\binom{n-j+2}{2},d}$ for all $j$ in the range $1 \le j \le n-2$. Then

\[ \bzero{\A{k}{I}} \le  \delta \bzero{\comp[d-1]{F^{k}}}-d+1  , \text{ and}\]
\[ \tdp{\A{k}{I}} \le  \delta \bzero{\comp[d]{F^{k}}}-d .\]

\end{prop}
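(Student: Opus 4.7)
The plan is to derive this proposition as a direct application of \cref{DegBoundRemark}(iii), specialized to the submaximal case $t = n-1$, combined with the fact that the submaximal symmetric determinantal ideal of a generic matrix is of linear type.

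First I would verify that the hypotheses of \cref{DegBoundRemark}(iii) are satisfied. The data on $A$, $I$, and $\delta$ is exactly the $t = n-1$ case of \hyperref[GradedDataSym]{\cref{GradedDataSym}.b}, and with $t = n-1$ the index range $1 \le j \le t-1$ becomes $1 \le j \le n-2$; hence the assumed bounds $\hgt{\detid{j}{A}} \ge \min\!\brces{\binom{n-j+2}{2},\, d}$ coincide with the height hypothesis of the corollary. Applying it yields
\[ \bzero{\A{k}{I}} \le \max\!\brces{\delta \bzero{\A{k}{J}},\, \delta \bzero{\comp[d-1]{D^{k}}}-d+1}, \]
\[ \tdp{\A{k}{I}} \le \max\!\brces{\delta \tdp{\A{k}{J}},\, \delta \bzero{\comp[d]{D^{k}}}-d}. \]

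The next step is to eliminate the $\A{k}{J}$ terms from these maxima. For this I would invoke part (c) of \cref{GlobalConditionsSymmetric} (Kotzev), which states that $J = \detid{n-1}{X}$ is of linear type whenever $X$ is a generic symmetric matrix. Consequently $\A{k}{J} = 0$ for every $k$, so both $\bzero{\A{k}{J}}$ and $\tdp{\A{k}{J}}$ equal $-\infty$, and each maximum collapses to its second argument. This produces exactly the claimed bounds.

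There is no serious obstacle here: the specialization machinery developed in \cref{Bounds} and \cref{DegBoundRemark}, together with Kotzev's linear type theorem, do all the work. The reason the proposition stops at these resolution-shift bounds rather than producing explicit numerical estimates, as in \cref{TheoremSubmaximalMinorsOrdinary} or \cref{TheoremMaximalMinors}, is precisely the lack of known minimal free resolutions --- or even regularities --- of $J^{k}$ in the generic symmetric case, as noted in the discussion immediately preceding the statement.
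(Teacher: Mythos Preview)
Your proposal is correct and follows exactly the paper's approach: apply \cref{DegBoundRemark}(iii) and then use \cref{GlobalConditionsSymmetric}(c) (Kotzev) to make the $\A{k}{J}$ terms vanish. The paper's proof is a single sentence doing precisely this.
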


\begin{proof}
Apply \cref{DegBoundRemark} with part (c) of \cref{GlobalConditionsSymmetric}, whence $\bzero{\A{k}{J}} = -\infty$ and $\tdp{\A{k}{J}} = -\infty$ for all $k$.
\end{proof}

\bigskip

\subsection{Degree Bounds for Alternating Matrices}
\label{AltSection}
\hypertarget{AltSection}{}

In this subsection, we consider the case where $I$ is the ideal of Pfaffians of an alternating matrix. As in \cref{OrdSection}, we begin with known results on $\bzero{\A{k}{J}}$ and $\tdp{\A{k}{J}}$, where $J$ is the corresponding Pfaffian ideal of a generic alternating matrix.

\bigskip

\begin{prop}
\label[proposition]{GlobalConditionsAlternating}
\hypertarget{GlobalConditionsAlternating}{}
Let $t$ and $n$ be integers satisfying $2 \le 2t \le n$, $K$ be a field, $X$ be an $n \times n$ generic alternating matrix over $K$, $S=\poly{K}{X}$, and $J = \pfaffid{2t}{X}$.
\begin{enumerate}
    \item If $\;2t = 2$, then $J$ is of linear type. Hence, for all $k$, \[\bzero{\A{k}{J}} = \tdp{\A{k}{J}} = -\infty.\]
    \item If $\;2t = n$, then $J$ is of linear type. Hence, for all $k$, \[\bzero{\A{k}{J}} = \tdp{\A{k}{J}} = -\infty.\]
    \item {\normalfont \cite[2.2]{Huneke3}} If $\;2t = n-1$, then $J$ is of linear type. Hence, for all $k$, \[\bzero{\A{k}{J}} = \tdp{\A{k}{J}} = -\infty.\]
    \item {\normalfont \cite[2.1]{Baetica2}} If $\;\chr K \ne 2$ and $2t = n-2$, then $J$ is of linear type. Hence, for all $k$, \[\bzero{\A{k}{J}} = \tdp{\A{k}{J}} = -\infty.\]
    \item If $\;2t = 4$, then $J$ is of linear type on the punctured spectrum. Hence, for all $k$, \[\tdp{\A{k}{J}} < \infty.\]
    \item If $\;4 < 2t < n-2$, then $J$ is not of linear type on the punctured spectrum. Hence, for some $k$, \[\tdp{\A{k}{J}} = \infty.\]
\end{enumerate}
\end{prop}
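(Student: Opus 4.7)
My approach is to handle the six parts in four groups. Parts (c) and (d) are quoted directly from \cite{Huneke3} and \cite{Baetica2}. Parts (a) and (b) are immediate from the shape of the generators. For (e), I will apply the alternating variant of \cref{ReductionLemma} to reduce locally to a complete intersection. For (f), I will use \cref{GsAlternating} together with the fact that $G_{\infty}$ is necessary for linear type.

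For (a), when $2t = 2$, the ideal $\pfaffid{2}{X}$ is generated by the distinct indeterminates $X_{ij}$ with $i < j$, which form a regular sequence in $S$; thus $J$ is a complete intersection and hence of linear type. For (b), when $2t = n$ (forcing $n$ even), the ideal $\pfaffid{n}{X}$ is the principal ideal generated by the nonzero element $\pfaffid{}{X}$ in the polynomial ring $S$, and any nonzero principal ideal in a domain is of linear type. In both cases $\A{k}{J} = 0$ for all $k$, so $\bzero{\A{k}{J}} = \tdp{\A{k}{J}} = -\infty$.

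For (e), let $\fp$ be a homogeneous prime of $S$ with $\fp \ne \fm$. Some $X_{ij}$ with $i < j$ lies outside $\fp$, and after a permutation of rows and columns (which preserves both the alternating structure and the ideal of Pfaffians) I may assume $X_{12} \notin \fp$. By the alternating version of \cref{ReductionLemma} with block size $j = 2$ (noted in the remarks immediately following that lemma), inverting $X_{12} = \pfaffid{}{U}$ (for $U$ the upper-left $2 \times 2$ block) produces a $C$-algebra isomorphism identifying $\pfaffid{4}{X}$ with $\pfaffid{2}{Y}$ in $C\!\left[Y\right]$, where $Y$ is an $(n-2) \times (n-2)$ generic alternating matrix over a suitable localization $C$. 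The ideal $\pfaffid{2}{Y}$ is generated by the indeterminate entries of $Y$ above the diagonal, hence by a regular sequence, so it is of linear type. Since linear type is preserved under further localization, $J_{\fp}$ is of linear type. Therefore $\A{k}{J}_{\fp} = 0$ for every homogeneous $\fp \ne \fm$ and every $k$; because associated primes of the finitely generated graded $S$-module $\A{k}{J}$ are homogeneous, the only possible associated prime is $\fm$, so $\A{k}{J}$ has finite length and $\tdp{\A{k}{J}} < \infty$.

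For (f), let $4 < 2t < n-2$. By \cref{GsAlternating}, the maximal $s$ with $J$ satisfying $G_s$ is $s = \binom{n-2t+4}{2}$, so there is a homogeneous prime $\fp \supseteq J$ with $\dim S_{\fp} = \binom{n-2t+4}{2}$ and $\gens{S_{\fp}}{J_{\fp}} > \dim S_{\fp}$. Because $2t > 4$, $\binom{n-2t+4}{2} < \binom{n}{2} = \dim S$, so $\fp \ne \fm$. Now if $J_{\fp}$ were of linear type, the fiber cone $\fiber{J_{\fp}}$ would be a polynomial algebra in $\gens{S_{\fp}}{J_{\fp}}$ variables, yielding $\gens{S_{\fp}}{J_{\fp}} = \aspread{J_{\fp}} \le \dim S_{\fp}$, a contradiction. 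Hence $J_{\fp}$ is not of linear type, so $\A{k}{J}_{\fp} \ne 0$ for some $k$; since $\A{k}{J}$ is finitely generated and graded with support strictly larger than $\brces{\fm}$, it fails to be Artinian, forcing $\tdp{\A{k}{J}} = \infty$. The main obstacle I anticipate is part (f)---extracting the failing prime in the punctured spectrum from \cref{GsAlternating} and invoking the necessity of $G_{\infty}$ for linear type, where the strict inequality $2t > 4$ is critical to placing $\fp$ outside $\brces{\fm}$.
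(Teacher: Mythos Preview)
Your proof is correct. The paper itself gives no proof of this proposition; it presents the statement as a compilation of known results, citing \cite{Huneke3} and \cite{Baetica2} for parts (c) and (d) and leaving (a), (b), (e), (f) unjustified. Your arguments fill in exactly what the paper omits: parts (a) and (b) are trivial (complete intersection and principal ideal), part (e) uses the alternating variant of \cref{ReductionLemma} to reduce locally to the $2t=2$ case, and part (f) combines \cref{GsAlternating} with the standard fact that linear type forces $G_\infty$ (via the analytic spread inequality). One minor remark on (f): you need not insist that the witnessing prime $\fp$ be homogeneous---once $\A{k}{J}_{\fp}\ne 0$ for some $\fp$ of height strictly less than $\dim S$, the homogeneous annihilator of $\A{k}{J}$ has height at most $\hgt\fp$, so $\dim\A{k}{J}\ge 1$ and the Hilbert polynomial argument gives $\tdp{\A{k}{J}}=\infty$.
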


\bigskip

In the case of submaximal Pfaffians, explicit resolutions of $J^{k}$ are known thanks to the work of Kustin and Ulrich (see \cref{KUComplexes}). Their computations are much more general than what is given in \cref{BZeroResolutionHeight3Gorenstein}. However, in the case of the submaximal Pfaffians of a generic alternating matrix, we obtain explicit resolutions of $\sym{k}{J}$. Since $J$ is of linear type in this case, $\sym{k}{J} \cong J^{k}$, giving us the desired result. From their work, we obtain the following information on the resolutions.

\bigskip

\begin{cor}
\label[corollary]{BZeroResolutionHeight3Gorenstein}
\hypertarget{BZeroResolutionHeight3Gorenstein}{}
Let $K$ be a field, $n$ an odd integer with $3 \le n$, $X$ an $n \times n$ generic alternating matrix over $R$, $S = \poly{K}{X}$, and $J = \pfaffid{n-1}{X}$. Let $\compp{\mathcal{D}^{k}}$ be the resolution of $J^{k}$ as given in \cref{KUComplexes}. 
\begin{enumerate}
    \item If \,$i \le \min\!\brces{k,n-1}$, then $\bzero{\compp[i]{\mathcal{D}^{k}}} = i$.
    \item If \,$i = k+1 \le n-1$ and $k$ is odd, then $\bzero{\compp[i]{\mathcal{D}^{k}}} = \paren{i-1}+\frac{1}{2}\paren{n-i+1}$.
    \item If \,$i = k+1$ and $k$ is even or if \,$i \ge \min\!\brces{k+2,n}$, then $\bzero{\compp[i]{\mathcal{D}^{k}}} = -\infty$.
\end{enumerate}
\end{cor}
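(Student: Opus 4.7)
The plan is to derive the three cases of the corollary as a direct computation from the explicit description of the complex $\compp{\mathcal{D}^{k}}$ given in \cref{KUComplexes}, together with the definition of the generation degree from \cref{BoundsNotation}. Recall that for a graded free $S$-module of the shape $S(-a)^{\beta}$ with $\beta > 0$, the minimal generators sit in a single degree $a$, so $\bzero{S(-a)^{\beta}} = a$; and by the convention in \cref{BoundsNotation}, $\bzero{0} = -\infty$. The corollary is then obtained by matching each case to the piecewise description of $\compp[i]{\mathcal{D}^{k}}$.

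First, in the range $i \le \min\{k,n-1\}$, \cref{KUComplexes} says $\compp[i]{\mathcal{D}^{k}} \cong S(-i)^{\beta_{i}^{k}}$ with $\beta_{i}^{k}$ nonzero, so applying the observation above yields $\bzero{\compp[i]{\mathcal{D}^{k}}} = i$, proving (a). Second, when $i = k+1 \le n-1$ and $k$ is odd, \cref{KUComplexes} gives $\compp[i]{\mathcal{D}^{k}} \cong S\!\left(-(i-1)-\tfrac{1}{2}(n-i+1)\right)$, which is a nonzero cyclic shifted free module, and its generation degree is exactly the shift $(i-1)+\tfrac{1}{2}(n-i+1)$, giving (b). Third, in the remaining cases, namely $i = k+1$ with $k$ even or $i \ge \min\{k+2,n\}$, the complex entry $\compp[i]{\mathcal{D}^{k}}$ is the zero module according to \cref{KUComplexes}, and hence $\bzero{\compp[i]{\mathcal{D}^{k}}} = -\infty$ by convention, proving (c).

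Since the statement is just a transcription of \cref{KUComplexes} through the definition of $\bzero$, there is no real obstacle; the only care needed is to correctly interpret the shifts. In particular, one should note that \cref{KUComplexes} presents the resolution of the internal twist $J^{k}(k(n-1)/2)$, so that reading off the shifts from the form $S(-i)^{\beta_{i}^{k}}$ etc.\ yields the generation degrees as stated in the corollary without further adjustment. Once this is noted, the three bullet points of the corollary reduce to a line-by-line application of the definition.
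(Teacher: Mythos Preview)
Your proposal is correct and matches the paper's approach: the paper states this result as an immediate corollary of \cref{KUComplexes} without giving any separate proof, and your line-by-line reading of the shifts through the definition of $\bzero$ is exactly the intended justification.
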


\bigskip

Using the above results, we obtain the following degree bounds.

\bigskip

\begin{thm}[Submaximal Pfaffians]
\label[theorem]{Grade3GorensteinBounds}
\hypertarget{Grade3GorensteinBounds}{}
Let $K$ be a field. Let $R = \poly{K}{x_{1},\ldots,x_{d}}$ be a polynomial ring in $d > 0$ indeterminates over $K$. Suppose $n$ is an odd integer satisfying $3 \le n$, and let $A$ be an $n \times n$ alternating matrix whose entries are all homogeneous elements of $R$ of the same degree $\delta$. Suppose $I = \pfaffid{n-1}{A}$ is of generic alternating height $($that is, $\hgt{I} = 3)$, and suppose $\hgt{\pfaffid{2j}{A}} \ge \min\!\brces{n-2j+2,d}$ for all $j$ in the range $2 \le 2j \le n-3$.
\begin{enumerate}
    \item If \,$k \ge d$ and $d \le n-1$, then 
    \[\bzero{\A{k}{I}} \le \paren{d-1}\paren{\delta-1}, \text{ and }\] 
    \[ \tdp{\A{k}{I}} \le d\paren{\delta-1}.\]
    
    \item If \,$d$ is even, $d \le n-1$, and $k = d-1$, then
    \[\bzero{\A{k}{I}} \le \paren{d-1}\paren{\delta-1}, \text{ and }\] 
    \[ \tdp{\A{k}{I}} \le \paren{d-1}\paren{\delta-1}+\frac{\delta}{2}\paren{n-d+1}-1.\]
    
    \item If \,$d$ is odd and $k = d-1$, then
    \[\bzero{\A{k}{I}} = -\infty, \text{ and }\] 
    \[ \tdp{\A{k}{I}} = -\infty.\]
    
    \item If \,$d \ge n$ or $k \le d-2$, then
    \[\bzero{\A{k}{I}} = -\infty, \text{ and }\] 
    \[ \tdp{\A{k}{I}} = -\infty.\]
\end{enumerate}
\end{thm}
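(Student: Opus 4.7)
The plan is to invoke part (iv) of \cref{DegBoundRemark} on the data of \hyperref[GradedDataAlt]{\cref{GradedDataAlt}.c}; since $2t=n-1$, the hypotheses on the heights of the $\pfaffid{2j}{A}$ are exactly the ones needed there, so for every $k$ we obtain
\begin{align*}
\bzero{\A{k}{I}} &\le \max\!\left\{\delta\bzero{\A{k}{J}},\ \delta\bzero{\compp[d-1]{D^k}}-d+1\right\},\\
\tdp{\A{k}{I}} &\le \max\!\left\{\delta\tdp{\A{k}{J}},\ \delta\bzero{\compp[d]{D^k}}-d\right\}.
\end{align*}
By part (c) of \cref{GlobalConditionsAlternating}, $J=\pfaffid{n-1}{X}$ is of linear type, so $\bzero{\A{k}{J}}=\tdp{\A{k}{J}}=-\infty$ and the two maxima collapse. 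The problem reduces to computing $\bzero{\compp[d-1]{D^k}}$ and $\bzero{\compp[d]{D^k}}$ for the Kustin--Ulrich complex of \cref{KUComplexes}, whose degree shifts are catalogued in \cref{BZeroResolutionHeight3Gorenstein}.

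The four cases of the theorem then fall out by tracking which clause of \cref{BZeroResolutionHeight3Gorenstein} the indices $i=d-1$ and $i=d$ trigger. In part (a), the conditions $k\ge d$ and $d\le n-1$ give $d-1,\,d\le\min\{k,n-1\}$, so the first clause yields $\bzero{\compp[d-1]{D^k}}=d-1$ and $\bzero{\compp[d]{D^k}}=d$; substitution produces the advertised $(d-1)(\delta-1)$ and $d(\delta-1)$. In part (b), where $d$ is even and $k=d-1$, the value $i=d-1=k$ is still in the first clause (giving $d-1$), while $i=d=k+1\le n-1$ with $k$ odd is in the second clause, giving $\bzero{\compp[d]{D^k}}=(d-1)+\tfrac{1}{2}(n-d+1)$; a short simplification then produces the claimed bounds.

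For parts (c) and (d) the strategy is different: instead of computing the shifts we force $\bzero{\compp[d]{D^k}}=-\infty$, which gives $\tdp{\A{k}{I}}=-\infty$. By the convention in \cref{BoundsNotation} this already entails $\A{k}{I}=0$, and therefore $\bzero{\A{k}{I}}=-\infty$ as well; this is what produces the equalities (rather than mere inequalities) asserted. For part (c), $d$ odd and $k=d-1$ even places $i=d=k+1$ in the third clause of \cref{BZeroResolutionHeight3Gorenstein}, either via the ``$i=k+1$ with $k$ even'' sub-case when $d\le n-1$, or via the ``$i\ge\min\{k+2,n\}$'' sub-case (noting $\min\{k+2,n\}=n\le d$) when $d\ge n$. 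For part (d), the hypothesis $d\ge n$ or $k\le d-2$ implies $d\ge\min\{k+2,n\}$, so the same third clause applies at $i=d$. The only real obstacle is the bookkeeping---keeping track of which sub-case of \cref{BZeroResolutionHeight3Gorenstein} triggers for each $(d,k,n)$ configuration---together with the easy-to-overlook observation that $\tdp M=-\infty$ already forces $M=0$, which is what upgrades the inequality supplied by \cref{DegBoundRemark} into an equality in the last two cases.
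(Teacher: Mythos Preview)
Your proposal is correct and follows precisely the approach sketched in the paper's proof, which simply cites \cref{DegBoundRemark}(iv), \cref{GlobalConditionsAlternating}(c), and \cref{BZeroResolutionHeight3Gorenstein} and refers to the proof of \cref{TheoremMaximalMinors} for the pattern. You have fully expanded the case analysis that the paper leaves implicit, including the observation that $\tdp{\A{k}{I}}=-\infty$ forces $\A{k}{I}=0$ and hence the equalities in (c) and (d).
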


\begin{proof}
Use \cref{DegBoundRemark} together with part (c) of \cref{GlobalConditionsAlternating} and \cref{BZeroResolutionHeight3Gorenstein} as in the proof of \cref{TheoremMaximalMinors}.
\end{proof}

\bigskip

Applying the above results, we draw the following conclusions.

\bigskip

\begin{cor}
\label[corollary]{PfaffianDegreeBoundFacts}
\hypertarget{PfaffianDegreeBoundFacts}{}
Let $K$ be a field. Let $R = \poly{K}{x_{1},\ldots,x_{d}}$ be a polynomial ring in $d > 0$ indeterminates over $K$. Suppose $n$ is an odd integer satisfying $3 \le n$, and let $A$ be an $n \times n$ alternating matrix whose entries are all homogeneous elements of $R$ of the same degree $\delta$. Suppose $I = \pfaffid{n-1}{A}$ is of generic alternating height $($that is, $\hgt{I} = 3)$, and suppose \hspace{0.08mm} $\hgt{\pfaffid{2j}{A}} \ge \min\!\brces{n-2j+2,d}$ for all $j$ in the range $2 \le 2j \le n-3$.
\begin{enumerate}
    \item If \,$d \ge n$, then $I$ is of linear type.
    \item If \,$\delta = 1$, then $I$ is of fiber type.
    \item $\A{k}{I} = 0$ for all $k \le d-2$.
    \item If \,$d$ is odd, then $\A{k}{I} = 0$ for all $k \le d-1$. 
    \item If \,$d$ is odd and $\delta = 1$, then $\paren{x_{1},\ldots,x_{d}}\A{}{I} = 0$. 
\end{enumerate}
\end{cor}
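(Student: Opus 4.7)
The plan is to unpack each of the five assertions directly from the numerical bounds in \cref{Grade3GorensteinBounds}, applied under the dictionary of \cref{LinearTypeNotation}: $I$ is of linear type iff $\bzero{\A{k}{I}} = \tdp{\A{k}{I}} = -\infty$ for every $k$, and of fiber type iff $\bzero{\A{k}{I}} \le 0$ for every $k$. The height hypotheses on $\pfaffid{2j}{A}$ are exactly those required to invoke the theorem, so no further verification is needed before appealing to it.

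For (a), I would apply case (d) of \cref{Grade3GorensteinBounds}: the assumption $d \ge n$ forces $\bzero{\A{k}{I}} = \tdp{\A{k}{I}} = -\infty$ for every $k$, so $\A{k}{I} = 0$ for all $k$ and $I$ is of linear type. Part (c) is immediate from the same case (d) applied with $k \le d-2$. Part (d) combines case (c) of the theorem (which handles the single value $k = d-1$ when $d$ is odd) with case (d) (which handles $k \le d-2$); together they give $\A{k}{I} = 0$ for every $k \le d-1$.

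For (b), setting $\delta = 1$ collapses every upper bound of the form $(d-1)(\delta-1)$ appearing in cases (a) and (b) of \cref{Grade3GorensteinBounds} to $0$, while cases (c) and (d) already give the bound $-\infty$. Thus $\bzero{\A{k}{I}} \le 0$ uniformly in $k$, which is exactly the definition of fiber type.

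For (e), I would first dispose of the case $d \ge n$ by invoking part (a), which already yields $\A{}{I} = 0$. When $d \le n-1$, part (d) (using $d$ odd) gives $\A{k}{I} = 0$ for all $k \le d-1$, so only the components with $k \ge d$ can be nonzero. For those, case (a) of \cref{Grade3GorensteinBounds} with $\delta = 1$ yields $\tdp{\A{k}{I}} \le d(\delta-1) = 0$. The only subtlety — and the main point to get right — is to observe that $\A{k}{I}$ is a subquotient of $S$ and hence has components only in nonnegative $x$-degree, so the bound $\tdp{\A{k}{I}} \le 0$ forces $\A{k}{I}$ to be concentrated in $x$-degree $0$. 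Assembling this over all $k \ge d$, every multiplication by $x_i$ raises the $x$-degree into a vanishing component, so $(x_1,\ldots,x_d)\A{}{I} = 0$, as desired.
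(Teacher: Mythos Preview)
Your proposal is correct and follows exactly the intended route: the paper derives all five parts directly from the case analysis in \cref{Grade3GorensteinBounds}, and you have carried out precisely that unpacking (in fact with more detail than the paper itself supplies). Your handling of (e), including the observation that $\A{k}{I}$ lives in nonnegative $x$-degree so that $\tdp{\A{k}{I}} \le 0$ forces concentration in degree $0$, is the right argument.
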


\bigskip

All of these results have already been proven. We restate them here for completeness. In particular, all of these are implications of \cite[6.1.b]{KPU}. Kustin, Polini, and Ulrich used a similar technique to obtain the same results; the difference is a matter of perspective. Kustin, Polini, and Ulrich applied the approximate resolution arguments in settings where the ideal $J$ is of linear type, a simpler context than the method employed in this paper. In the setting of submaximal Pfaffians, the broader perspective reduces to their technique and, therefore, recovers their results.  Before \cite{KPU}, however, some of these results had already been recognized. Both (c) and (d) had been observed by Morey \cite[4.1]{Morey}, also using the complexes in \cref{KUComplexes}. 

Moreover, (a) has been known for the longest amount of time. Indeed, due to the work of J. Watanabe \cite{Watanabe}, we know ideals as in \cref{PfaffianDegreeBoundFacts} are in the linkage class of a complete intersection. Hence, these ideals are strongly Cohen-Macaulay, and therefore, satisfy a condition known as sliding depth \cite[1.4]{Huneke3}. In the presence of the sliding depth condition, a homogeneous ideal is of linear type if and only if the ideal satisfies $G_{\infty}$ \cite{HSV:LinearType}. However, condition (a) implies $G_{\infty}$.

For other values of $2t$, we do not know explicit resolutions of $J^{k}$. However, due to the work of Perlman \cite[Theorem A]{Perlman}, we know the regularity of $J^{k}$ for sufficiently large $k$. 

For the rest of the section, we obtain degree bounds using the regularity in \cite[Theorem A]{Perlman}. As we are not aware of any prior work to obtain degree bounds in the case of Pfaffian ideals of alternating matrices, we believe the results proven in the rest of this section are novel.

\bigskip

\begin{thm}[Size $n-2$ Pfaffians]
\label[theorem]{TheoremN-2Pfaffs}
\hypertarget{TheoremN-2Pfaffs}{}
Let $K$ be a field of characteristic zero. Let $R = \poly{K}{x_{1},\ldots,x_{d}}$ be a polynomial ring in $d > 0$ indeterminates over $K$. Suppose $n$ is an even integer satisfying $4 \le n$, and let $A$ be an $n \times n$ alternating matrix whose entries are all homogeneous elements of $R$ of the same degree $\delta$. Suppose $I = \pfaffid{n-2}{A}$ is of generic alternating height $($that is, $\hgt{I} = 6)$, and suppose \hspace{0.08mm} $\hgt{\pfaffid{2j}{A}} \ge \min\!\brces{\binom{n-2j+2}{2},d}$ for all $j$ in the range $2 \le 2j \le n-4$.

    \begin{enumerate}
        \item If \,$n$ is divisible by \,$4$ and $k \ge n-2$, then 
        \[\bzero{\A{k}{I}} \le \paren{d-1}\paren{\delta-1}+\frac{\delta\paren{n-4}^{2}}{8}, \text{ and }\] 
        \[ \tdp{\A{k}{I}} \le d\paren{\delta-1}+\frac{\delta\paren{n-4}^{2}}{8}.\]
        
        \item If \,$n$ is not divisible by \,$4$ and $k \ge n-2$, then 
        \[\bzero{\A{k}{I}} \le \paren{d-1}\paren{\delta-1}+\frac{\delta\paren{n-2}\paren{n-6}}{8}, \text{ and }\] 
        \[ \tdp{\A{k}{I}} \le d\paren{\delta-1}+\frac{\delta\paren{n-2}\paren{n-6}}{8}.\]
    \end{enumerate}
\end{thm}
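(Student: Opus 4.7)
The plan is to follow the same template used to prove \cref{Grade3GorensteinBounds} and \cref{TheoremSubmaximalMinorsOrdinary}: reduce to a statement about a minimal free resolution in the generic case, then read off the required bounds from a regularity computation. Concretely, I begin by invoking \cref{DegBoundRemark} part~(v), which applies because $2 \le 2t = n-2 < n-1$ (using $n \ge 4$ even) and the hypothesis $\hgt{\pfaffid{2j}{A}} \ge \min\{\binom{n-2j+2}{2},d\}$ holds throughout the required range $1 \le j \le t-1$. This yields
\[ \bzero{\A{k}{I}} \le \max\{\delta\bzero{\A{k}{J}},\, \delta\bzero{\compp[d-1]{D^{k}}}-d+1\},\]
\[ \tdp{\A{k}{I}} \le \max\{\delta\tdp{\A{k}{J}},\, \delta\bzero{\compp[d]{D^{k}}}-d\},\]
where $J = \pfaffid{n-2}{X}$ and $\compp{D^{k}}$ is the minimal homogeneous free $B$-resolution of $J^{k}(tk)$.

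Next, I would eliminate the first terms in both maxima. Since $\chr K = 0 \ne 2$ and $2t = n-2$, part~(d) of \cref{GlobalConditionsAlternating} gives that $J$ is of linear type, hence $\bzero{\A{k}{J}} = \tdp{\A{k}{J}} = -\infty$ for every $k$. Thus the bounds reduce to
\[ \bzero{\A{k}{I}} \le \delta\bzero{\compp[d-1]{D^{k}}}-d+1, \qquad \tdp{\A{k}{I}} \le \delta\bzero{\compp[d]{D^{k}}}-d.\]

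To control the generation degrees of the free modules in the minimal resolution, I would use the standard inequality $\bzero{\compp[i]{D^{k}}} \le \REG(J^{k}(tk)) + i = \REG J^{k} - tk + i$, valid since $\compp{D^{k}}$ is the minimal homogeneous free resolution of $J^{k}(tk)$. Substituting the specializations $i = d-1$ and $i = d$ gives
\[ \bzero{\A{k}{I}} \le (d-1)(\delta-1) + \delta(\REG J^{k} - tk), \qquad \tdp{\A{k}{I}} \le d(\delta-1) + \delta(\REG J^{k} - tk).\]
I would then invoke Perlman's regularity formula \cite[Theorem A]{Perlman} for powers of Pfaffian ideals of generic alternating matrices, which, in the regime $k \ge n-2$ relevant here, yields $\REG J^{k} - tk = (n-4)^{2}/8$ when $4 \mid n$ and $\REG J^{k} - tk = (n-2)(n-6)/8$ when $4 \nmid n$. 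Plugging these into the displayed inequalities produces exactly the bounds in parts~(a) and~(b) of the statement.

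The main obstacle is purely bookkeeping: one must extract the precise form of $\REG J^{k}$ from Perlman's statement (which is phrased for general size $2t$) and confirm that the divisibility of $n$ by $4$ corresponds to the parity split appearing there, as well as verify that his threshold for the closed-form regularity is at most $k = n-2$. Once this identification is made, the remaining arithmetic is elementary and parallels the calculations in \cref{Grade3GorensteinBounds} and \cref{TheoremSubmaximalMinorsOrdinary}.
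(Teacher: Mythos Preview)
Your proposal is correct and follows essentially the same approach as the paper: the paper's proof is the one-line instruction ``Apply \cref{DegBoundRemark} with part (d) of \cref{GlobalConditionsAlternating} and \cite[Theorem A]{Perlman},'' and you have accurately unpacked what that entails, including the reduction via linear type of $J$ and the regularity estimate for $\bzero{\compp[i]{D^{k}}}$.
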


\begin{proof}
Apply \cref{DegBoundRemark} with part (d) of \cref{GlobalConditionsAlternating} and \cite[Theorem A]{Perlman}.  
\end{proof}

\bigskip

\begin{cor}
Let $K$ be a field of characteristic zero. Let $R = \poly{K}{x_{1},\ldots,x_{d}}$ be a polynomial ring in $d > 0$ indeterminates over $K$. Suppose $A$ is a $\;6 \times 6$ alternating matrix whose entries are all homogeneous elements of $R$ of degree $\;1$. Let $I = \pfaffid{4}{A}$ be of generic alternating height $($that is, $\hgt{I} = 6)$, and suppose \hspace{0.08mm} $\hgt{\pfaffid{2}{A}} \ge \min\!\brces{15,d}$.

Then $I$ is of fiber type and $\paren{x_{1},\ldots,x_{d}}\A{}{I} = 0$.
\end{cor}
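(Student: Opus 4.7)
The plan is to invoke \cref{TheoremN-2Pfaffs} with $n=6$ and $\delta=1$. Since $6$ is not divisible by $4$, part (b) of that theorem applies; the only height hypothesis one must check is $\hgt{\pfaffid{2}{A}} \geq \min\!\brces{\binom{6}{2},d} = \min\!\brces{15,d}$, which is exactly what the corollary assumes. Substituting $\delta = 1$ and $n = 6$, both terms of each bound vanish, since $(d-1)(\delta-1) = 0$ and $\delta(n-2)(n-6)/8 = 0$, so
\[ \bzero{\A{k}{I}} \leq 0 \qquad\text{and}\qquad \tdp{\A{k}{I}} \leq 0 \]
for every $k \geq n-2 = 4$.

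Next I would dispose of the remaining values $k = 0, 1, 2, 3$. For $k = 0, 1$, the module $\A{k}{I}$ vanishes automatically; in particular, the $k=1$ vanishing is the observation $\brackets{\A{}{I}}_{\left(*,1\right)} = 0$ recorded in \cref{SymmetricNotation}. For $k = 2, 3$, the hypothesis $\chr K = 0 \neq 2$ lets me invoke \hyperref[GlobalConditionsAlternating]{\cref{GlobalConditionsAlternating}.d} to conclude that the generic ideal $J = \pfaffid{4}{X}$ is of linear type; hence $\A{k}{J} = 0$, and the exact sequence of \hyperref[SpecializationLemmaSeq]{\cref{SpecializationLemmaSeq}.iii} identifies $\A{k}{I}$ with a twist of $\ker\psi_{k}$. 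Feeding the approximate resolution supplied by \hyperref[SpecializationLemmaApproxRes]{\cref{SpecializationLemmaApproxRes}.ii} (exactly as in the proof of \hyperref[DegBoundRemark]{\cref{DegBoundRemark}.v}) into the local-cohomology bounds of \cite[5.4, 5.6]{KPU}, together with the low-degree resolution data for $J^{2}$ and $J^{3}$, again yields $\bzero{\A{k}{I}} \leq 0$ and $\tdp{\A{k}{I}} \leq 0$ for $k = 2, 3$.

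Once the two vanishings hold in all degrees, the conclusions of the corollary are purely formal. By \cref{LinearTypeNotation}, having $\bzero{\A{k}{I}} \leq 0$ for all $k$ is precisely the statement that $I$ is of fiber type. The vanishing $\tdp{\A{k}{I}} \leq 0$ for all $k$ says that every nonzero bihomogeneous piece of $\A{}{I}$ sits in bidegree $(0,k)$; multiplying such a piece by any $x_{i}$, which carries bidegree $(1,0)$, sends it into a bidegree in which $\A{}{I}$ is zero, so $\paren{x_{1},\ldots,x_{d}}\A{}{I} = 0$.

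The main obstacle is the range $k = 2, 3$: the Perlman-type asymptotic regularity result that drives \cref{TheoremN-2Pfaffs} only begins to deliver its bounds at $k \geq n-2$. For these low powers one must either invoke the linear-type structure of $J$ directly, as sketched above, or, when $d \geq 15$, appeal to \hyperref[LinFibCor]{\cref{LinFibCor}.v} to conclude that $I$ is already of linear type and obtain both claims for all $k$ at once.
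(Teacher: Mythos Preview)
Your approach is essentially the same as the paper's: apply \cref{TheoremN-2Pfaffs} with $n=6$, $\delta=1$ to handle $k\ge 4$, then treat $k=2,3$ separately via \cref{DegBoundRemark} using that $J$ is of linear type. The one place where you leave a genuine gap is the phrase ``together with the low-degree resolution data for $J^{2}$ and $J^{3}$'': you never say what that data is or where it comes from. The bound from \cref{DegBoundRemark} with $\delta=1$ and $\A{k}{J}=0$ reduces to $\bzero{\A{k}{I}}\le \bzero{\compp[d-1]{D^{k}}}-d+1$ and $\tdp{\A{k}{I}}\le \bzero{\compp[d]{D^{k}}}-d$, so you need $\bzero{\compp[i]{D^{k}}}\le i$ for $k=2,3$, i.e., that the minimal free resolutions of $J^{2}$ and $J^{3}$ are linear. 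The paper supplies exactly this input by an explicit Macaulay2 computation; without it (or some substitute, such as a regularity computation valid for small $k$), your argument for $k=2,3$ is incomplete. Your closing remark about invoking \hyperref[LinFibCor]{\cref{LinFibCor}.v} when $d\ge 15$ is correct and a nice shortcut in that range, but it does not close the gap for $d<15$.
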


\begin{proof}
Apply \cref{TheoremN-2Pfaffs}. Since $6$ is not divisible by $4$, $\bzero{\A{k}{I}} \le 0$ and $\tdp{\A{k}{I}} \le 0$ for all $k \ge 4$. 

We may then use the Macaulay2 computational software \cite{M2} to compute minimal homogeneous free resolutions of $J^{2}$ and $J^{3}$. These resolutions are linear. Hence, we obtain $\bzero{\A{k}{I}} \le 0$ and $\tdp{\A{k}{I}} \le 0$ when $2 \le k \le 3$ and $\delta = 1$. Therefore, $\bzero{\A{k}{I}} \le 0$ and $\tdp{\A{k}{I}} \le 0$ for all $k$, giving $I$ is of fiber type and $\A{}{I}$ is annihilated by $\paren{x_{1},\ldots,x_{d}}$.
\end{proof}

\bigskip

\begin{thm}[Size $4$ Pfaffians]
\label[theorem]{Theorem4Pfaffs}
\hypertarget{Theorem4Pfaffs}{}
Let $K$ be a field of characteristic zero. Let $R = \poly{K}{x_{1},\ldots,x_{d}}$ be a polynomial ring in $d > 0$ indeterminates over $K$. Suppose $n$ is an  integer satisfying $4 \le n$, and let $A$ be an $n \times n$ alternating matrix whose entries are all homogeneous elements of $R$ of the same degree $\delta$. Suppose $I = \pfaffid{4}{A}$ is of generic alternating height $($that is, $\hgt{I} = \binom{n-2}{2})$. Let $X$ be an $n \times n$ generic alternating matrix over $R$ and $J = \pfaffid{4}{X}$, and suppose \hspace{0.08mm} $\hgt{\pfaffid{2}{A}} \ge \min\!\brces{\binom{n}{2},d}$.

        If $n$ is even and $k \ge n-2$ or if \hspace{0.08mm} $n$ is odd and $k \ge n-3$, then 
        \[\bzero{\A{k}{I}} \le \max\brces{\delta \bzero{\A{k}{J}}, \paren{d-1}\paren{\delta-1}}, \text{ and }\] 
        \[ \tdp{\A{k}{I}} \le \max\brces{\delta \tdp{\A{k}{J}}, d\paren{\delta-1}}.\]
\end{thm}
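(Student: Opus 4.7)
The plan is to mimic the strategy used in \cref{TheoremMaximalMinors}, \cref{Theorem2x2Minors}, and \cref{TheoremN-2Pfaffs}, namely to combine \cref{DegBoundRemark} with a regularity estimate for $J^{k}$ coming from the literature. The input is part (v) of \cref{DegBoundRemark}, which applies here once $n\ge 6$ (so that $2\le 2t=4<n-1$); the height hypothesis $\hgt{\pfaffid{2}{A}}\ge\min\!\brces{\binom{n}{2},d}$ is precisely the $j=1$ instance needed. The small cases $n=4$ (where $2t=n$) and $n=5$ (where $2t=n-1$) fall under \hyperref[GlobalConditionsAlternating]{\cref*{GlobalConditionsAlternating}.b,c}, whence $J$ is of linear type and \cref{SpecializationLemma} already forces $\A{k}{I}=0$, so these are compatible with the stated inequalities.

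First I would invoke \cref{DegBoundRemark} part (v) to obtain
\[ \bzero{\A{k}{I}} \le \max\!\brces{\delta\bzero{\A{k}{J}},\; \delta\bzero{\comp[d-1]{D^{k}}}-d+1}, \]
\[ \tdp{\A{k}{I}} \le \max\!\brces{\delta\tdp{\A{k}{J}},\; \delta\bzero{\comp[d]{D^{k}}}-d}. \]
Since $\left(\comp{D^{k}},\comp{\varphi^{k}}\right)$ is a minimal homogeneous free $B$-resolution of $J^{k}\!\left(2k\right)$, we have the standard comparison $\bzero{\comp[i]{D^{k}}}\le \REG J^{k}\!\left(2k\right)+i=\REG J^{k}-2k+i$ for every $i$. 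Thus it suffices to show that, in the specified range of $k$, one has $\REG J^{k}=2k$, i.e.\ that $J^{k}$ has a linear minimal free resolution.

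Next I would appeal to Perlman's regularity theorem \cite[Theorem A]{Perlman} applied to the ideal of size $4$ Pfaffians ($2t=4$). For sufficiently large $k$ the regularity of $J^{k}$ grows linearly with slope equal to the generating degree $2$ of $J$, and the threshold at which this linear formula takes over is exactly $k\ge n-2$ when $n$ is even and $k\ge n-3$ when $n$ is odd. Substituting $\REG J^{k}=2k$ into the estimate above gives $\bzero{\comp[d-1]{D^{k}}}\le d-1$ and $\bzero{\comp[d]{D^{k}}}\le d$, so that
\[ \delta\bzero{\comp[d-1]{D^{k}}}-d+1 \le \delta(d-1)-d+1 = (d-1)(\delta-1), \]
\[ \delta\bzero{\comp[d]{D^{k}}}-d \le \delta d-d = d(\delta-1), \]
and the asserted bounds follow upon substitution back into the two displays at the start of the proof.

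The main obstacle, and the only non-mechanical step, is matching the hypotheses of \cite[Theorem A]{Perlman} with the present situation to extract the precise threshold $k\ge n-2$ (resp.\ $k\ge n-3$) for the linear regularity; this is where the parity split in the statement originates. Everything else is a routine composition of \cref{DegBoundRemark}, the definition of $b_{0}$ in terms of Castelnuovo--Mumford regularity as recorded in \cref{RegularityNotation}, and the degree shift $J^{k}\rightsquigarrow J^{k}\!\left(2k\right)$ built into \hyperref[GradedDataAlt]{\cref*{GradedDataAlt}.c}.
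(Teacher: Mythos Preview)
Your proposal is correct and follows essentially the same approach as the paper: the paper's one-line proof reads ``Apply \cref{DegBoundRemark} with part (e) of \cref{GlobalConditionsAlternating} and \cite[Theorem A]{Perlman},'' and you spell out exactly this argument in detail, including the regularity computation $\REG J^{k}=2k$ from Perlman and the handling of the edge cases $n=4,5$. The one ingredient you do not cite explicitly is part (e) of \cref{GlobalConditionsAlternating}, which the paper invokes to record that $\tdp{\A{k}{J}}<\infty$ so the second displayed bound is genuinely informative; this does not affect the validity of the inequalities themselves.
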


\begin{proof}
Apply \cref{DegBoundRemark} with part (e) of \cref{GlobalConditionsAlternating} and \cite[Theorem A]{Perlman}.  
\end{proof}

\bigskip

Although we do not know $\bzero{\A{k}{J}}$ in \cref{Theorem4Pfaffs}, one may conjecture $\bzero{\A{k}{J}} \le 0$ for all $k$. This is motivated by the work of Huang, Perlman, Polini, Raicu, and Sammartano \cite{HPPRS} which uses techniques from representation theory to prove the ideal of $2 \times 2$ minors of a generic ordinary matrix (of certain sizes) are of fiber type. Typically, representation theory techniques for ideals of minors of ordinary matrices transfer to the corresponding size Pfaffian ideals of a generic alternating matrix. Therefore, one might suspect $\bzero{\A{k}{J}} \le 0$ for certain values of $n$ in \cref{Theorem4Pfaffs}.

We end with the following general result.

\bigskip

\begin{thm}[Pfaffians of an Alternating Matrix]
\label[theorem]{TheoremGeneralPfaffs}
\hypertarget{TheoremGeneralPfaffs}{}
Let $K$ be a field of characteristic zero. Let $R = \poly{K}{x_{1},\ldots,x_{d}}$ be a polynomial ring in $d > 0$ indeterminates over $K$. Suppose $t$ and $n$ are integers satisfying $4 < 2t < n-2$, and let $A$ be an $n \times n$ alternating matrix whose entries are all homogeneous elements of $R$ of the same degree $\delta$. Let $I = \pfaffid{2t}{A}$ be of generic alternating height $($that is, $\hgt{I} = \binom{n-2t+2}{2})$. Suppose $X$ is an $n \times n$ generic alternating matrix over $R$ and $J = \pfaffid{2t}{X}$, and suppose \hspace{0.08mm} $\hgt{\pfaffid{2j}{A}} \ge \min\!\brces{\binom{n-2j+2}{2},d}$ for all $j$ in the range $2 \le 2j \le 2t-2$.

If \,$n$ is even and $k \ge n-2$ or if \,$n$ is odd and $k \ge n-3$, then 
\[ \bzero{\A{k}{I}} \le  \max\brces{\delta\bzero{\A{k}{J}}, \paren{d-1}\paren{\delta-1}+\delta N\!\paren{t}},  \]
    where $N\!\left(t\right) = \begin{cases}
t\paren{\frac{t}{2}-1} & \text{ if } t \text{ is even}\\
\frac{1}{2}\paren{t-1}^{2} & \text{ if } t \text{ is odd.}
\end{cases}$
\end{thm}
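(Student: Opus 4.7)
The plan is to proceed in parallel with the proofs of \cref{TheoremArbitraryMinors} and \cref{Theorem4Pfaffs}: namely, apply \cref{DegBoundRemark} (specifically part (v), which requires exactly the height hypothesis $\hgt{\pfaffid{2j}{A}} \ge \min\!\brces{\binom{n-2j+2}{2},d}$ for $2 \le 2j \le 2t-2$) in order to reduce the problem to bounding $\delta \bzero{\A{k}{J}}$ and $\delta \bzero{\comp[d-1]{D^{k}}} - d + 1$. Since $4 < 2t < n-2$ falls under case (f) of \cref{GlobalConditionsAlternating}, we have $\tdp{\A{k}{J}} = \infty$ for some $k$, which is why the theorem only asserts a bound on $\bzero{\A{k}{I}}$ and not on $\tdp{\A{k}{I}}$.

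Next, I would bring in regularity information to control $\bzero{\comp[d-1]{D^{k}}}$. Because $\compp{D^{k}}$ is the minimal homogeneous free $B$-resolution of $J^{k}\!\left(tk\right)$, one has the standard inequality $\bzero{\comp[d-1]{D^{k}}} \le \REG J^{k}\!\left(tk\right) + d-1 = \REG J^{k} - tk + d - 1$. Perlman's result \cite[Theorem A]{Perlman} computes $\REG J^{k}$ for the Pfaffians of a generic alternating matrix, stating that for sufficiently large $k$ (concretely, $k \ge n-2$ when $n$ is even and $k \ge n-3$ when $n$ is odd), the regularity is $tk + N\!\left(t\right)$, where the constant $N\!\left(t\right)$ splits by the parity of $t$ precisely as in the statement of the theorem: $t\!\left(\frac{t}{2}-1\right)$ when $t$ is even and $\frac{1}{2}\left(t-1\right)^{2}$ when $t$ is odd.

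Combining these pieces, I would obtain
\[ \delta \bzero{\comp[d-1]{D^{k}}} - d + 1 \le \delta N\!\left(t\right) + \delta\!\left(d-1\right) - \left(d-1\right) = \left(d-1\right)\!\left(\delta-1\right) + \delta N\!\left(t\right), \]
so that \cref{DegBoundRemark}(v) yields the stated bound
\[ \bzero{\A{k}{I}} \le \max\brces{\delta\bzero{\A{k}{J}},\; \left(d-1\right)\!\left(\delta-1\right) + \delta N\!\left(t\right)}. \]

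The main obstacle is not the argument itself, which is by now a routine specialization of the general machinery built in \cref{SectionSpecialization} and \cref{DegBoundTools}, but rather verifying that the parity split in Perlman's regularity formula matches the claimed $N\!\left(t\right)$ and that the thresholds $k \ge n-2$ (resp.\ $k \ge n-3$) in the hypothesis of the theorem align with the range of $k$ for which Perlman's stable regularity formula is valid; this is a bookkeeping check against \cite[Theorem A]{Perlman}. Everything else is direct substitution.
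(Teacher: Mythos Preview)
Your proposal is correct and follows essentially the same approach as the paper: apply \cref{DegBoundRemark}(v), invoke part (f) of \cref{GlobalConditionsAlternating} to explain why only the generation-degree bound is asserted, and use \cite[Theorem A]{Perlman} to compute $\REG J^{k}$ in the stated range of $k$. The paper's proof is in fact a one-line citation of exactly these three ingredients; your write-up simply unpacks the arithmetic.
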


\begin{proof}
Apply \cref{DegBoundRemark} together with part (f) of \cref{GlobalConditionsAlternating} and \cite[Theorem A]{Perlman}.
\end{proof}

\bibliographystyle{mandebib}
\bibliography{CPrefs.bib}
\end{document}